\newcommand{\jonly}[1]{}
\newcommand{\aronly}[1]{#1}
\def\Z{{\mathbb Z}} \def\R{{\mathbb R}}  \def\C{{\mathbb C}}
\def\coker{\mathop{\fam0 coker}}
\def\di{\mathop{\fam0 div}}
\def\ex{\mathop{\fam0 ex}}
\def\id{\mathop{\fam0 id}}
\def\inc{\mathop{\fam0 i}}
\def\Int{\mathop{\fam0 Int}}
\def\im{\mathop{\fam0 im}}
\def\lk{\mathop{\fam0 lk}}
\def\mo{\mathop{\fam0 mod}}
\def\pr{\mathop{\fam0 pr}}
\def\Tors{\mathop{\fam0 Tors}}
\def\ker{\mathop{\fam0 ker}}
\def\ord{\mathop{\fam0 ord}}
\def\t{\widetilde}
\def\mk{\# }
\newcommand{\capM}[1]{\cap_{#1}}
\newcommand{\de}{\partial}
\newcommand{\nus}{\de\nu}
\long\def\comment#1\endcomment{}
    \theoremstyle{theorem}
         \newtheorem{Theorem}{Theorem}[section]
         \newtheorem{Lemma}[Theorem]{Lemma}
         \newtheorem{Corollary}[Theorem]{Corollary}
\theoremstyle{definition}
\newtheorem{Conjecture}[Theorem]{Conjecture}
\newtheorem{Remark}[Theorem]{Remark}
\newtheorem{Example}[Theorem]{Example}
\newtheorem{Problem}[Theorem]{Problem}
\newtheorem{Addendum}[Theorem]{Addendum}
\begin{document}

\title{\Large Embeddings  of non-simply-connected 4-manifolds in 7-space. II.\\ On the smooth classification}

\author{D. Crowley\thanks{The University of Melbourne, Australia.
Email: \texttt{dcrowley@unimelb.edu.au}. Web: \texttt{www.dcrowley.net} .} and A. Skopenkov\thanks{Moscow Institute of Physics and Technology, and Independent University of Moscow, Russia. Email: \texttt{skopenko@mccme.ru}. Web: \texttt{https://users.mccme.ru/skopenko/ .}
Supported in part by the Russian Foundation for Basic Research Grants No. 15-01-06302 and 19-01-00169, by Simons-IUM Fellowship and by the D. Zimin Dynasty Foundation.}
}

\date{}

\maketitle

\begin{abstract}
We work in the smooth category.
Let $N$ be a closed connected orientable 4-manifold with torsion free $H_1$, where $H_q := H_q(N; \Z)$.
Our main result is {\it a readily calculable classification of embeddings $N\to\R^7$ up to isotopy},
with an indeterminancy.
Such a classification was only known before for $H_1=0$ by our earlier work from 2008.
Our classification is complete when $H_2=0$ or when the signature of $N$ is divisible neither by 64 nor by 9.

The group of knots $S^4\to\R^7$ acts on the set of embeddings $N\to\R^7$ up to isotopy by embedded connected sum.
In Part I we classified the quotient of this action.
The main novelty of this paper is the description of this action for $H_1\ne0$, with an indeterminancy.

Besides the invariants of Part I, detecting the action of knots involves a refinement of the Kreck invariant from our work of 2008.


For $N=S^1\times S^3$ we give a geometrically defined 1--1 correspondence between the set of isotopy classes of embeddings and a certain explicitly defined quotient of the set $\Z\oplus\Z\oplus\Z_{12}$.
\end{abstract}

\tableofcontents


\section{Overview and main results}\label{s:intro-int}

We consider {\it smooth} manifolds, embeddings and isotopies.
For an $n$-manifold $P$ denote by $E^m(P)$ the set of isotopy classes of embeddings $P\to S^m$.
The group $E^m(S^n)$ acts on $E^m(P)$ by embedded connected sum \cite[\S1.1]{CS16}, \cite[\S4]{Sk16c}.
Denote this action by $\#$ and its quotient by $E_{\mk}^m(P)$.

\begin{Remark}[The action of knots in general] \label{r:general}
If the quotient $E_{\mk}^m(P)$ is known for a closed $n$-manifold $P$, the description of $E^m(P)$ is reduced
to the {\it determination of the orbits} of the embedded connected sum action of $E^m(S^n)$ on $E^m(P)$.
For a general closed $n$-manifold $P$ describing the  action by a non-zero group of knots
$E^m(S^n)$ on $E^m(P)$ is a non-trivial task.
For the cases when the quotient $E_{\mk}^7(N)$ coincides with the set of PL embeddings up to PL isotopy, the quotient has been known since 1960s \cite{Sk16e, Sk16f, Sk16t}.
However, until recently no description of the action (or, equivalently, no classification of $E^m(P)$) was known for $E^m(S^n)\ne0$ and $P$ not a disjoint union of homology spheres.
For recent results see \cite{Sk08', Sk10, CS11}.
On the other hand, the description of the action in \cite{CRS07, Sk11, CRS11, Sk15} is not hard,
the hard part of the cases considered there is rather the description of the quotient $E_{\mk}^m(P)$.

There are non-isotopic embeddings $g_1,g_2\colon S^2\to S^4$ and an embedding $f\colon\R P^2\to S^4$ such that $[f\#g_1]\ne[f\#g_2]$ \cite{Vi73}; i.e. the action of the monoid $E^4(S^2)$ on $E^4(\R P^2)$ is not free.

Various authors have studied the analogous connected sum action of the group of homotopy $n$-spheres on the set of smooth $n$-manifolds homeomorphic to given manifold; see for example \cite{Le70, Sc73, Wi74} and references there.
\end{Remark}

More motivation and background for this paper may be found in \cite{Sk16c, Sk16f} and Part I \cite[\S1]{CS16}.
In this paper $N$ is a closed connected orientable $4$-manifold and $H_q:=H_q(N;\Z)$.
We present a readily calculable classification (in the sense of \cite[Remark 1.2]{Sk16c}) of $E^7(N)$ when $H_1$ is torsion free (up to an indeterminancy in certain cases).
See Theorems \ref{t:s1s3sm}, \ref{t:gensm} and Corollaries \ref{t:actioni}, \ref{c:gen} below.
Our classification is complete when $H_2=0$ (see Theorem \ref{t:gensm} and Corollary \ref{c:gen}.b) or when the signature of $N$ is divisible neither by 64 nor by 9 (see Theorem \ref{t:gensm} and Corollary \ref{t:actioni}).
The classification requires finding a complete set of invariants and constructing embeddings realizing particular values of these invariants.
The invariants we use are described in \cite[Lemma 1.3, \S2.2, \S2.3]{CS16} and \S\ref{s:defandplan-eta}.
An overview of the proof of their completeness is given in \cite[\S1.4]{CS16} and in
\jonly{\cite[Remark 7.4]{CS16o}}
\aronly{Remark \ref{r:action} below}
(using definitions recalled at the beginning of \S\ref{s:defandplan-eta}).

The action of $E^7(S^4)\cong\Z_{12}$ on $E^7(N)$ was investigated in \cite{Sk10} and determined when $H_1=0$ in \cite{CS11}, which also classified $E^7(N)$ in this case.
In \cite{CS16} we described the quotient $E_{\mk}^7(N)$ when $H_1=0$.
Thus the main novelty of this paper is the description of this action for $H_1\ne0$.

Denote by $q_{\mk}:E^7(N)\to E^7_{\mk}(N)$ the quotient map.



Let us state our main result for $N=S^1\times S^3$. 
For this identify $E^7(S^4)$ and $\Z_{12}$ by the isomorphism $\eta$ of \cite{CS11} (recalled in a more general situation in \S\ref{s:defandplan-eta}) and consider the following diagram (where the left triangle is not commutative):
$$
\xymatrix{
\Z_{12}\times\Z^2 \ar[r]^{\pr_2} \ar[d]_{\#\times\tau} & \Z^2 \ar[d]^{\tau_{\mk}:=q_{\mk}\tau} \ar[dl]^{\tau}\\
E^7(S^1\times S^3)  \ar[r]_{q_\#} & E_{\mk}^7(S^1\times S^3).}
$$
The map $\tau$ is defined in \cite[\S1.2]{CS16}.
We define the map $\#\times\tau$ by $(\#\times\tau)(a,l,b):=a\#\tau(l,b)$.

\begin{Theorem} \label{t:s1s3sm}
The map $\#\times\tau\colon \Z_{12}\times\Z^2\to E^7(S^1\times S^3)$ is a surjection such that

(a) for different pairs $l,b$ the sets $P_{l,b}:=(\#\times\tau)(\Z_{12}\times(l,b))$  either are disjoint or coincide;

$$
(b)\qquad P_{l,b}=P_{l',b'}\quad\Leftrightarrow\quad(\ l=l' \quad\text{and}\quad b\equiv b'\mo2l\ );
$$
$$
(c)\qquad |P_{l,b}|=\begin{cases}12 & l\ne0\\ 2\gcd(b,6) &l=0.\end{cases}
$$
\end{Theorem}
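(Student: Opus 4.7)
The plan is to reduce the theorem to two inputs: the classification of $E^7_{\mk}(S^1\times S^3)$ established in Part I, and the behavior of the refined Kreck invariant $\eta$ under the $\Z_{12}$-action by connected sum.

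First, since $P_{l,b}$ is by construction the $E^7(S^4)\cong\Z_{12}$-orbit of $\tau(l,b)$, part (a) is just the general fact that two orbits of a group action are either disjoint or equal. Surjectivity of $\#\times\tau$ is equivalent to surjectivity of $\tau_{\mk}$: indeed, $f\in E^7(S^1\times S^3)$ lies in the image of $\#\times\tau$ if and only if $q_{\mk}(f)$ lies in the image of $\tau_{\mk}$, since the fibers of $q_{\mk}$ are exactly the $\Z_{12}$-orbits. Surjectivity of $\tau_{\mk}$ is then read off from the classification of $E^7_{\mk}(S^1\times S^3)$ in Part I. Part (b) reduces to computing the fibers of $\tau_{\mk}$: $P_{l,b}=P_{l',b'}$ iff $\tau_{\mk}(l,b)=\tau_{\mk}(l',b')$, and Part I shows this holds iff $l=l'$ and $b\equiv b'\pmod{2l}$.

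The crux of the theorem is part (c), which amounts to determining the $\Z_{12}$-stabilizer of each $\tau(l,b)$, as $|P_{l,b}|=12/|\mathrm{Stab}(\tau(l,b))|$. I plan to use the refined Kreck invariant $\eta$ from \S\ref{s:defandplan-eta}, whose essential feature is that, together with the Part I invariants, it distinguishes isotopy classes within a single $q_{\mk}$-fiber. What is needed is a formula of the shape $\eta(a\#f)-\eta(f)=\alpha(f)\cdot a$ in the target group of $\eta$, together with the precise indeterminacy of $\eta$ at $\tau(l,b)$. For $l\ne 0$ the shift $\alpha$ should be a generator of $\Z_{12}$, yielding a free action and $|P_{l,b}|=12$. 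For $l=0$ the stabilizer is expected to consist of those $a\in\Z_{12}$ with $ab\equiv 0\pmod 6$, which is the cyclic subgroup of order $6/\gcd(b,6)$; this gives the orbit size $2\gcd(b,6)$ as required.

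The main obstacle is the $l=0$ case, where the shift of $\eta$ under connected sum depends on the integer $b$ in an arithmetically sensitive way. One must pin down both the precise formula and the indeterminacy of $\eta$ on $\tau(0,b)$ to see that exactly the multiples of $2\gcd(b,6)$ in $\Z_{12}$ act trivially. I expect this to be handled by an explicit computation of $\eta(\tau(0,b))$ and $\eta(a\#\tau(0,b))$, comparing the two via a bordism in $S^7$ that realizes connected sum with a generator of $E^7(S^4)$ and tracking the spin bordism class of the complement that defines $\eta$. Once these computations are in place, parts (a)--(c) all follow together with the surjectivity of $\#\times\tau$.
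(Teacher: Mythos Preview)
Your overall strategy matches the paper's: surjectivity, (a), and (b) reduce to Part~I, and (c) is controlled by the refined Kreck invariant~$\eta$. But the mechanism you propose for (c) is off in a way that would prevent the argument from closing.

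The shift of $\eta$ under connected sum is \emph{not} of the form $\alpha(f)\cdot a$ with a variable $\alpha(f)$. By Addendum~\ref{l:addeta} the shift is always exactly $[2a]$; what depends on $(u,l,b)$ is the \emph{target group} $\Z_{\widehat d}/\im\theta_{u,l,b}$. Hence the stabilizer of $f\in\beta_{u,l}^{-1}(b)$ is $\{a\in\Z_{12}:2a\in\im\theta_{u,l,b}\}$, and the real problem is to compute $\im\theta_{u,l,b}$, not to evaluate $\eta$ at particular embeddings. For $N=S^1\times S^3$ one has $u=0$, $d=0$, $\widehat d=24$. When $l\ne0$ the domain $\ker(2\overline l)\subset H_3\cong\Z$ of $\theta_{0,l,b}$ is zero, so $\im\theta_{0,l,b}=0$ and the action is free; this, rather than a ``generator shift'', is why $|P_{l,b}|=12$. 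When $l=0$ the domain is all of $H_3$, and the difference formula of Theorem~\ref{t:gensm}.c gives $\theta_{0,0,b}(y)=\theta_{0,0,0}(y)+4\rho_{24}(b\capM{N} y)$; once one arranges $\theta_{0,0,0}=0$ (the paper does this via the general reduction in \S\ref{s:defandplan-s1s3d}, cf.\ also Lemma~\ref{l:etaiy}), one gets $\im\theta_{0,0,b}=4b\,\Z_{24}$, hence target $\Z_{4\gcd(b,6)}$ and stabilizer $\{a:a\equiv0\pmod{2\gcd(b,6)}\}$.

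Note also that your stated stabilizer condition ``$ab\equiv0\pmod6$'' is internally inconsistent: for $b=1$ it picks out $\{0,6\}\subset\Z_{12}$, of order $2$, not the $6/\gcd(1,6)=6$ you announce. So the proposed ``explicit computation of $\eta(\tau(0,b))$ via a bordism realizing connected sum'' is aimed at the wrong object: what you actually need is the indeterminacy $\im\theta_{0,0,b}$, and this comes from Lemma~\ref{l:diff}.c together with a normalization $\theta_{0,0,0}=0$, not from evaluating $\eta$ at any single embedding.
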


In Theorem \ref{t:s1s3sm} the surjectivity of $\tau$ and Parts (a) and (b) follow from \cite[Theorem 1.1]{CS16}.
The new part of Theorem \ref{t:s1s3sm} is (c); this part follows from
Corollary \ref{c:gen}.b below (because for $l\ne0$ the group $\coker\overline l$ is finite, so $\di b=0$).
\aronly{Cf.~Addendum \ref{c:s1s3}.}
\jonly{Cf.~\cite[Addendum 7.3]{CS16o}.}


\begin{Example}\label{e:com} There is an embedding $f\colon S^1\times S^3\to S^7$ with $f(N)\subset S^6$ and a pair of non-isotopic embeddings $g_1,g_2\colon S^4\to S^7$  such that $f\#g_1$ and $f\#g_2$ are isotopic.
\end{Example}

This example follows because $|P_{0,1}|=2$ by Theorem \ref{t:s1s3sm} and there is a representative of $\tau(0,1)$ whose image is in $S^6\subset S^7$ \cite[Lemma 2.18]{CS16}.
Example \ref{e:com} shows the necessity of the simple-connectivity assumption in the following result
(which is \cite[The Effectiveness Theorem 1.2]{Sk10}):

{\it If $f\colon N\to S^7$ is an embedding of a spin simply-connected closed 4-manifold $N$, \ $f(N)\subset S^6$ and embeddings $g_1,g_2\colon S^4\to S^7$ are not isotopic, then $f\#g_1$ and $f\#g_2$ are not isotopic.}

Before stating our main result for the general case in Theorem \ref{t:gensm} below we state the following corollaries of it.

\begin{Corollary}[of Theorem \ref{t:gensm}.c; proved in \S\ref{s:defandplan-s1s3d}]\label{t:fa}
Let $N$ be a closed connected orientable 4-manifold with torsion free $H_1$.
Then the following statements are equivalent:

(i) for every embedding $f:N\to S^7$ and non-isotopic embeddings $g_1,g_2\colon S^4\to S^7$ the embeddings $f\#g_1$ and $f\#g_2$ are not isotopic
(i.e. the action $\#$ of $E^7(S^4)$ on $E^7(N)$ is free);

(ii) $N$ is an integral homology $4$-sphere.
\end{Corollary}

The Bo\'echat-Haefliger invariant $\varkappa_{\mk}$ is defined in \cite[\S2.2]{CS16}, and we denote  $\varkappa=\varkappa_{\mk}q_{\mk}$:
$$\xymatrix{E^7(N) \ar[r]^{q_{\mk}} &  E_{\mk}^7(N) \ar[r]^(0.15){\varkappa_{\mk}} &
H_2^{DIFF}:=\{u\in H_2\ |\ \rho_2u=w_2^*(N),\ u\capM{N} u=\sigma(N)\}\subset H_2 }$$

\begin{Corollary}[of Theorem \ref{t:gensm}.c]\label{t:actioni}
Let $N$ be a closed connected orientable 4-manifold with torsion free $H_1$ and $f:N\to S^7$ an embedding.

(a) If $\varkappa(f)$ is neither divisible by $4$ nor by $3$, then for every embedding $g\colon S^4\to S^7$ the embeddings $f\#g$ and $f$ are isotopic.

(b) If $\varkappa(f)$ is divisible by 4 but neither by 8 nor by 3, then there is a non-trivial embedding $g_1\colon S^4\to S^7$ such that for every  embedding $g\colon S^4\to S^7$ the embedding $f\#g$ is isotopic to either $f$ or $f\#g_1$.
\end{Corollary}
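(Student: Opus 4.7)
My plan is to derive the corollary directly from Theorem~\ref{t:gensm}.c, which (together with the general setup outlined in Remark~\ref{r:action}) describes the orbit of an embedding $f\colon N\to S^7$ under the connected-sum action of $E^7(S^4)\cong\Z_{12}$ in terms of arithmetic properties of the refined Kreck invariant $\varkappa(f)$.

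First I would use the Chinese Remainder decomposition $\Z_{12}\cong\Z_4\oplus\Z_3$ to split the analysis of the action of $E^7(S^4)$ into a 2-primary and a 3-primary part. The content of Theorem~\ref{t:gensm}.c, as used already in the derivation of Theorem~\ref{t:s1s3sm}.c, is that the stabilizer of $f$ in $\Z_{12}$ is controlled by the 2-adic and 3-adic divisibility of $\varkappa(f)$: the $\Z_3$-summand fixes $f$ as soon as $3\nmid\varkappa(f)$, while the $\Z_4$-summand fixes $f$ entirely when $4\nmid\varkappa(f)$ and acts through the quotient $\Z_4/2\Z_4\cong\Z_2$ when $4\mid\varkappa(f)$ but $8\nmid\varkappa(f)$.

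Granting this, part~(a) is immediate: under its hypotheses both primary summands act trivially on $f$, so the full $\Z_{12}$-orbit of $f$ collapses to $\{f\}$, i.e.\ $f\#g$ is isotopic to $f$ for every $g\in E^7(S^4)$. For part~(b), the hypotheses force the $\Z_3$-summand to act trivially while the $\Z_4$-summand acts with stabilizer the unique index-two subgroup $2\Z_4\subset\Z_4$; hence the $\Z_{12}$-orbit of $f$ consists of exactly two isotopy classes. It then suffices to take $g_1$ to be any knot whose class in $\Z_{12}$ projects nontrivially to $\Z_4/2\Z_4$, so that every $f\#g$ must agree with either $f$ or $f\#g_1$.

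The main obstacle is therefore not geometric but bookkeeping: one must read off the stabilizer formula from Theorem~\ref{t:gensm}.c in the precise form above, and verify that the two divisibility conditions in (a) and (b) translate under that formula into orbit sizes $1$ and $2$ respectively. Once the stabilizer formula is explicit, both parts follow by a direct application of the Chinese Remainder Theorem to $\Z_{12}$ and no further geometric input is required.
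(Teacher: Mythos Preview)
Your overall plan is on the right track and matches the paper's approach in spirit: both arguments reduce the question to arithmetic of $\widehat d=\gcd(d,24)$ with $d=\di\varkappa(f)$, and both end with an orbit-size computation that is, as you say, pure bookkeeping. The paper's own justification is a single clause: the corollary follows from Theorem~\ref{t:gensm}.c (together with Addendum~\ref{l:addeta}) ``because $4\Z_{\gcd(\varkappa(f),24)}=0$ under the assumptions of Corollary~\ref{t:actioni}''.

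However, your write-up skips exactly the step that does the work. You assert that the stabilizer decomposes as claimed (``the $\Z_3$-summand fixes $f$ as soon as $3\nmid\varkappa(f)$'', etc.) and then write ``Granting this\ldots''. But that assertion \emph{is} the content to be proven, and Theorem~\ref{t:gensm}.c does not state it in that form. What you need to make explicit is:
\begin{itemize}
\item By Addendum~\ref{l:addeta} and the injectivity of $\eta_{u,l,b}$ in Theorem~\ref{t:gensm}.c, $a\in\Z_{12}$ fixes $[f]$ if and only if $[2a]=0\in\Z_{\widehat d}/\im\theta_{u,l,b}$.
\item Under either hypothesis (a) or (b), one has $\widehat d\in\{1,2\}$ or $\widehat d=4$ respectively; in both cases $4\Z_{\widehat d}=0$, so the target $4\Z_{\widehat d}$ of $\theta_{u,l,b}$ is trivial and $\im\theta_{u,l,b}=0$.
\item Hence the stabilizer is $\{a\in\Z_{12}:2a\equiv0\bmod\widehat d\}$, which is all of $\Z_{12}$ in case~(a) and has index~$2$ in case~(b).
\end{itemize}
Once these three points are written down, your Chinese Remainder framing is a perfectly good way to phrase the last step, but it is not needed: the conclusion is immediate from $\widehat d\mid 4$. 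As written, your proposal postpones to ``bookkeeping'' precisely the observation $4\Z_{\widehat d}=0$ that is the whole argument.
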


Corollary \ref{t:actioni} follows from Corollary \ref{c:gen}.bc (or from Theorem \ref{t:gensm}.c and Addendum \ref{l:addeta} because $4\Z_{\gcd(\varkappa(f),24)}=0$ under the assumptions of Corollary \ref{t:actioni}).
The assumption of Corollary \ref{t:actioni}.a is automatically satisfied
when the signature of $N$ is divisible neither by 16 nor by 9.


Denote
$$
\widehat n:=\gcd(n,24).
$$
{\it If $H_1=0$, then $\varkappa_{\mk}$ is 1--1 and $\varkappa=\varkappa_{\mk}q_{\mk}$ is surjective.
For each $u\in H_2^{DIFF}$ we have $|\varkappa^{-1}(u)|=\widehat u/\gcd(u,2)$.}
Here the first sentence is easily deduced from \cite{BH70}, see \cite[Remark 2.20.e]{CS16}.
The second sentence is proved in \cite{CS11}.

Our second main result is a generalization of the above statement to non-simply-connected 4-manifolds.
We use the convention on coefficients and the notation for characteristic classes and intersections in manifolds from \cite[\S1.2]{CS16}.

\smallskip
{\bf Definition of $\di$, $B(H_3)$, $\overline l$, a symmetric pair, $K_{u,l}$, $C_{u,l}$ and $\capM{d}$.}
For an element $u$ of a free abelian group denote by $\di u$ the divisibility of $u$,
i.e. $\di 0=0$ and $\di u$ is the largest integer which divides $u$ for $u\ne0$.
For an element $u$ of an abelian group $G$ denote by $\di u$ the divisibility of $[u]\in G/\Tors(G)$.

Denote by $B(H_3)$ the space of bilinear forms $H_3\times H_3\to\Z$.
For $l\in B(H_3)$ denote by $\overline l:H_3\to H_1$ the adjoint homomorphism uniquely defined by the property
$l(x,y)=x\capM{N} \overline ly$.
A pair $(u,l)\in H_2\times B(H_3)$ is called {\it symmetric} if
$$l(y,x)=l(x,y)+u\capM{N} x\capM{N} y \quad\text{for all}\quad x,y\in H_3.$$

For $u\in H_2$, $l\in B(H_3)$ and $d:=\di u\in\Z$ denote
$$
K_{u,l}:=\ker(2\rho_d\overline l)\subset H_3\quad\text{and}\quad C_{u,l}:=\coker(2\rho_d\overline l).
$$
If the pair $(u,l)$ is symmetric, then a bilinear map
$$
\capM{d}\colon C_{u,l}\times K_{u,l}\to\Z_d\quad\text{is well-defined by}
\quad [x]\capM{d} y:=x\capM{N} y.
\footnote{\label{f:capd} Indeed, for each $x\in H_3$ and $y\in K_{u,l}$ we have
$2\overline lx\capM{N} y=2l(x,y)\underset d\equiv2l(y,x)=2\overline ly\capM{N} x\underset d\equiv0$.
Hence $\im(2\rho_d\overline l)\capM{N} K_{u,l}=\{0\}\subset\Z_d$.}
$$

The maps $\varkappa,\lambda,\beta_{u,l}$, and $\eta_{u,l,b},\theta_{u, l, b}$ of Theorem \ref{t:gensm} below are defined in \S\ref{s:defandplan-eta}.
The definitions of $\varkappa,\lambda,\beta_{u,l}$ are recalled from \cite[\S2.2, \S2.3]{CS16} and the definitions of $\eta_{u,l,b}$ and $\theta_{u, l, b}$ are new.


\begin{Theorem}\label{t:gensm} Let $N$ be a closed connected orientable 4-manifold with torsion free $H_1$.

(a) The product
$$\varkappa \times\lambda\colon E^7(N)\to H_2^{DIFF}\times B(H_3)$$
has non-empty image consisting of symmetric pairs.

(b) For every $(u,l)\in \im (\varkappa\times\lambda)$ denote $d:=\di u$.
Every map
$$
\beta_{u,l}\colon (\varkappa\times\lambda)^{-1}(u,l)\to C_{u,l}
$$
is surjective \aronly{(see the remark immediately below the Theorem)}.

(c) For every $b\in C_{u,l}$ every map
$$\eta_{u,l,b}\colon \beta_{u,l}^{-1}(b)\to\frac{\Z_{\widehat d}}{\im\theta_{u,l,b}}$$
is an injection whose image consists of all even elements \aronly{(see the remark immediately below the Theorem)}.
Moreover, the map
$$
\theta_{u,l,b}\colon K_{u,l}\to4\Z_{\widehat d}
$$
is a homomorphism and
$$
\theta_{u,l,b}(y)-\theta_{u,l,b'}(y)=4\rho_{\widehat d}(b-b')\capM{d} y\quad\text{for every}\quad
b,b'\in C_{u,l}\quad \text{and}\quad y\in K_{u,l}.
$$

(d) $|\beta_{u,l}^{-1}(b)|=
\dfrac{\widehat u}{\gcd(u,2)\cdot|\im \theta_{u,l,b}|}.$
\end{Theorem}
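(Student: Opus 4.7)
The plan is to build on the classification of $E^7_\#(N) = E^7(N)/E^7(S^4)$ from Part I and analyze the $E^7(S^4) \cong \Z_{12}$-action on each fiber of $q_\#$ over $(u,l) \in \im(\varkappa \times \lambda)$ and $b \in \coker(2\rho_d \overline{l})$.

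Parts (a) and (b) reduce to \cite[Theorem 1.1]{CSI}. Since both $\varkappa \times \lambda$ and $\beta_{u,l}$ factor through $q_\#$, the image of $\varkappa \times \lambda$ consists of symmetric pairs and $\beta_{u,l}$ is surjective onto $\coker(2\rho_d \overline{l})$, inheriting these properties from the $E^7_\#(N)$ classification together with surjectivity of $q_\#$.

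For part (c), I would unpack the definition of $\eta_{u,l,b}$ in \S\ref{s:defandplan-eta} as a refinement of the Kreck invariant detecting the knot action, with $\theta_{u,l,b}$ capturing its indeterminacy under geometric modifications of the base 3-cycle by elements $y \in \ker(2\rho_d \overline{l})$. The homomorphism property of $\theta_{u,l,b}$ then follows from bilinearity of $\capM{d}$ and additivity of the underlying obstruction-theoretic construction. For the difference formula $\theta_{u,l,b}(y) - \theta_{u,l,b'}(y) = 4\rho_{\widehat d}(b-b') \capM{d} y$, I would compare two base representatives, one projecting to $b$ and the other to $b'$: their geometric difference is a 3-cycle whose $\capM{d}$-pairing with $y$ contributes exactly the stated correction, the factor $4$ reflecting the $4\Z_{\widehat d}$-normalization of $\theta$.

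The main obstacle is proving that $\eta_{u,l,b}$ is injective with image equal to the set of even elements of $\Z_{\widehat d}/\im \theta_{u,l,b}$. Injectivity would follow the bordism strategy of \cite{CS11, Sk10}: two embeddings with equal $\eta$ yield a spin bordism of their exteriors whose surgery obstruction vanishes, allowing construction of an isotopy by modified surgery. The evenness of the image reflects a parity constraint intrinsic to the Kreck invariant, while surjectivity onto even elements is achieved by realizing each such value as an embedded connected sum with an appropriate multiple of a generator of $E^7(S^4) \cong \Z_{12}$. Part (d) then follows by counting: the group $A := \Z_{\widehat d}/\im \theta_{u,l,b}$ is cyclic of order $\widehat d/|\im \theta_{u,l,b}|$, and its 2-torsion $A[2]$ has order $\gcd(u,2)$ (the parity of $\widehat d$ matches that of $\gcd(u,2)$, and in the even case the inclusion $\im \theta_{u,l,b} \subseteq 4\Z_{\widehat d}$ ensures $|A|$ is still even), so $|\beta_{u,l}^{-1}(b)| = |2A| = |A|/|A[2]| = \widehat u/(\gcd(u,2)|\im \theta_{u,l,b}|)$.
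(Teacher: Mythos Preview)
Your overall architecture matches the paper's: parts (a) and (b) are inherited from Part~I (the paper cites \cite[Theorem~1.2]{CSI} rather than Theorem~1.1), part~(d) is a counting argument exactly as you describe, and for~(c) the injectivity of $\eta_{u,l,b}$ is indeed established via modified surgery (the Isotopy Classification Theorem~\ref{l:Is}), while the image description uses evenness from \cite[Lemma~4.3.a]{CSI} together with the additivity formula $\eta_{u,l,b}(f\#a)=\eta_{u,l,b}(f)+[2a]$ (Addendum~\ref{l:addeta}).

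The genuine gap is in your treatment of the properties of $\theta_{u,l,b}$. You write that the homomorphism property ``follows from bilinearity of $\capM{d}$ and additivity of the underlying obstruction-theoretic construction,'' and that the difference formula comes from comparing base representatives via a $\capM{d}$-pairing. This glosses over what is the technical heart of the paper. Recall that $\theta(f,y)=\eta(\id_{\de C_f},Y_{f,y})$ is a Kreck invariant: one must choose a spin nullbordism $(W,z)$ of $(M_f,Y_{f,y})$, lift $\rho_d z^2$ to $\overline{z^2}$, and compute $\overline{z^2}\capM{W}\rho_{\widehat d}(z^2-p_W^*)$. None of the required properties---divisibility by~4, additivity in~$y$, the difference formula under change of~$f$---is a formal consequence of bilinearity. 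The paper proves them (Lemma~\ref{l:diff}) by constructing explicit $8$-dimensional spin bordisms $W$ via \emph{parametric surgery} on $C_f\times I$ along embedded $S^2\times S^3\times S^1$, and then performing delicate computations with the resulting intersection forms (\S\ref{s:eta-prb}, \S\ref{s:eta-prac}). For additivity, two such surgeries are performed in $C_f\times[-1,0]$ and $C_f\times[0,1]$ and glued; for the difference formula, surgeries in $C_0\times I$ and $C_1\times I$ are glued along a $\pi$-isomorphism, and the argument ultimately invokes Wall's relation $2(\widehat a)^3-\widehat a\cap p^*\equiv 0\pmod{12}$ on a $6$-manifold \cite{Wa66}. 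Without these constructions your outline does not supply a proof of~(c).
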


We call geometrically defined maps invariants.
In particular, the maps $\lambda$ and $\varkappa$ are invariants.

\aronly{\smallskip
{\bf Remark on relative invariants.}}
The maps $\beta_{u,l}$ and $\eta_{u, l, b}$ are {\em relative invariants}.
\jonly{See \cite[the Remark after Theorem 1.6]{CS16o}.}
\aronly{For $\eta_{u, l, b}$ this means that for $[f_0], [f_1] \in \beta_{u,l}^{-1}(b)$ there is an
invariant $([f_0], [f_1]) \mapsto \eta(f_0, f_1)$ (defined in \S\ref{s:defandplan-eta})
and that $\eta_{u, l,b}(f) := \eta(f,f')$ for a fixed choice of $[f']\in\beta_{u,l}^{-1}(b)$.
We suppress the choice of $[f']$ from the notation.
For $\beta_{u,l}$ the situation is similar and is discussed in the remark immediately following \cite[Theorem 1.3]{CS16}.

\smallskip
}

Parts (a) and (b) of Theorem \ref{t:gensm}
follow from \cite[Theorem 1.3]{CS16}.
The new part of Theorem \ref{t:gensm} is (c), which is proven in \S\ref{s:defandplan-eta}.
Part (d) follows  because by (c) $\im\eta_{u,l,b}=2\Z_{\widehat d}/\im \theta_{u,l,b}$.

We remark that Theorem \ref{t:s1s3sm} is not an immediate corollary of Theorem \ref{t:gensm},
cf.~\cite[Remarks 2.20.a and 2.24]{CS16}.

\begin{Addendum}
\label{l:addeta}
In the notation of Theorem \ref{t:gensm}, for each $a\in\Z_{12}$, $(u,l)\in\im(\varkappa\times\lambda)$,
$b\in C_{u,l}$ and $f\in\beta_{u,l}^{-1}(b)$
$$\eta_{u,l,b}(f\#a)=\eta_{u,l,b}(f)+[2a]\in \frac{\Z_{\widehat d}}{\im \theta_{u,l,b}}.$$
\end{Addendum}

This follows from the definition of $\eta_{u,l,b}$ (\S\ref{s:defandplan-eta}) and \cite[Lemma 4.3.b]{CS16}.

\begin{Corollary}\label{c:gen}
For each $(u,l)\in \im (\varkappa\times\lambda)$ let $d:=\di u$.
There is $f_{u,l}\in (\varkappa\times\lambda)^{-1}(u,l)$ such that for each $f\in (\varkappa\times\lambda)^{-1}(u,l)$ and $a,a'\in \Z_{12}$, denoting $b:=\beta(f,f_{u,l})\in C_{u,l}$ we have

(a) \quad $f\#a=f\#a'\quad\Leftrightarrow\quad a=a'$, \quad provided either

$\bullet$ $u=0$ and $\di b$ is divisible by 6, or

$\bullet$ $u\ne0$, $2\rho_d\overline l=0$ and $u$ is divisible by $24\ord(4b)$;

(b) \quad $f\#a=f\#a'\quad\Leftrightarrow\quad a\equiv a'\mod2\gcd(\di b,6),$ \quad provided $u=0$;

(c) \quad $f\#a=f\#a'\quad\Leftrightarrow\quad a\equiv a'\mod \dfrac{\widehat{\frac u{\ord(4b)}}}{\gcd(u,2)}$,\footnote{The class $u$ is divisible by $d$ and hence by the order $\ord(4b)$ of $d$ in the $d$-group $C_{u,l}$.} \quad
provided $u\ne0$ and $2\rho_d\overline l=0$.
\end{Corollary}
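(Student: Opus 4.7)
The plan is to deduce the corollary from Addendum \ref{l:addeta} combined with the injectivity of $\eta_{u,l,b}$ in Theorem \ref{t:gensm}.c, by computing the quotient $\Z_{\widehat d}/\im\theta_{u,l,b}$ explicitly in each case.

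First I would observe that $\varkappa$ and $\lambda$ are absolute (not relative) invariants of $[f]$ and so are unchanged under connected sum with a knot: $\varkappa(f\#a)=u$ and $\lambda(f\#a)=l$. The very statement of Addendum \ref{l:addeta}, as an equality in $\Z_{\widehat d}/\im\theta_{u,l,b}$, implicitly records that $\beta_{u,l}(f\#a)=\beta_{u,l}(f)=b$, so $f\#a$ and $f\#a'$ both lie in $\beta_{u,l}^{-1}(b)$. By the injectivity of $\eta_{u,l,b}$ on this fibre and the shift formula of the Addendum, the problem collapses to the single arithmetic condition
$$f\#a=f\#a'\ \Longleftrightarrow\ [2(a-a')]=0\in\Z_{\widehat d}/\im\theta_{u,l,b}. \qquad (*)$$

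Next I would pick $f_{u,l}\in(\varkappa\times\lambda)^{-1}(u,l)$ as the reference for $\beta_{u,l}$ (so that $b=\beta(f,f_{u,l})$), arranged moreover so that $\theta_{u,l,0}=0$; the remaining freedom in the reference for $\eta$ suffices for this. The difference formula of Theorem \ref{t:gensm}.c then gives
$$\im\theta_{u,l,b}\ =\ \bigl\{\,4\rho_{\widehat d}(b\capM{d} y)\,:\,y\in\ker(2\rho_d\overline l)\bigr\}\ \subseteq\ 4\Z_{\widehat d}.$$
In case (b), $u=0$ forces $d=0$ and $\widehat d=24$; the pairing $\capM{0}$ is evaluation of the linear form $b$, whose image on $\ker(2\overline l)$ is $\di b\cdot\Z$. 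Using the identity $\gcd(4\gcd(n,24),24)=4\gcd(n,6)$ one gets $\im\theta_{0,l,b}=4\gcd(\di b,6)\,\Z_{24}$, and translating $(*)$ yields $a\equiv a'\pmod{2\gcd(\di b,6)}$ in $\Z_{12}$. In case (c), $2\rho_d\overline l=0$ gives $\ker=H_3$; the cyclic subgroup $\{b\capM{d} y\}_{y\in H_3}\subseteq\Z_d$ has order $\ord(b)$, its image under $4\rho_{\widehat d}$ has order $\ord(4b)$, and a short computation with GCDs identifies the even part of $\Z_{\widehat d}/\im\theta_{u,l,b}$ with $\Z_{\widehat{u/\ord(4b)}/\gcd(u,2)}$, so that substituting into $(*)$ gives the congruence of (c). Finally (a) falls out: its first hypothesis forces $2\gcd(\di b,6)=12$ and its second forces $\widehat{u/\ord(4b)}/\gcd(u,2)\ge 12$, so in both subcases the equivalence collapses to $a=a'$ in $\Z_{12}$.

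The main obstacle is the arithmetic in case (c): matching the size of the cyclic subgroup $\im\theta_{u,l,b}\subseteq 4\Z_{\widehat d}$ with the quantity $\widehat{u/\ord(4b)}/\gcd(u,2)$ needs careful bookkeeping with divisibilities and the two reductions modulo $d$ and $\widehat d$. A secondary technical point is to justify that $f_{u,l}$ may be taken with $\theta_{u,l,0}=0$; this should follow because the indeterminacy in the definition of $\eta$ is precisely $\im\theta$, which lets one translate the base-point of the $\eta$-invariant into $\theta$ at will.
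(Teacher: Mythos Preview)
Your overall strategy matches the paper's: reduce to $(*)$ via Addendum~\ref{l:addeta} and the injectivity of $\eta_{u,l,b}$, normalise so that $\theta_{u,l,0}=0$, use the difference formula in Theorem~\ref{t:gensm}.c to identify $\im\theta_{u,l,b}$ with $4\rho_{\widehat d}\bigl(b\capM{d}\ker(2\rho_d\overline l)\bigr)$, and finish arithmetically. The paper packages the last step slightly differently, going through $|\beta_{u,l}^{-1}(b)|$ and a separate Lemma~\ref{l:unimzd}, but the content is the same.

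The real gap is your justification of the normalisation $\theta_{u,l,0}=0$. Changing the reference $f_0\in\beta_{u,l}^{-1}(b)$ in the definition of $\eta_{u,l,b}$ only shifts $\eta_{u,l,b}$ by a constant and does not touch $\theta_{u,l,b}$ at all, since $\theta_{u,l,b}(y)=\theta(f,y)$ for \emph{any} $f\in\beta_{u,l}^{-1}(b)$. What you must change is the reference $f_{u,l}$ for $\beta_{u,l}$ itself, and this amounts to finding some $\tilde b$ with $\theta_{u,l,\tilde b}=0$. That is \emph{not automatic}: by the difference formula it requires that the homomorphism $\theta_{u,l,b'}\colon\ker(2\rho_d\overline l)\to4\Z_{\widehat d}$ extend from $\rho_{\widehat d}\ker(2\rho_d\overline l)$ to all of $\rho_{\widehat d}H_3$, after which Poincar\'e duality writes the extension as $y\mapsto 4x\capM{N}y$ and one sets $\tilde b:=b'+x$. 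This extension step is precisely where the hypotheses $d=0$ or $2\rho_d\overline l=0$ enter (see Remark~\ref{r:cordif}); without them the normalisation can fail and $\im\theta_{u,l,b}$ is genuinely unknown (Remark~\ref{r:at}.c). So your ``indeterminacy in $\eta$'' heuristic does not work, and the hypotheses of (b),~(c) are doing more than your sketch acknowledges.

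A smaller point: in case~(b) the claim that $b$ evaluated on $\ker(2\overline l)$ has image exactly $\di b\cdot\Z$ is not a tautology about divisibility in a cokernel; it needs the symmetry of the pairing (so that $\im(2\overline l)$ annihilates $\ker(2\overline l)$), and the paper isolates this as Lemma~\ref{l:unimzd}.
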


Part (a) follows from Parts (b,c).
Parts (b,c) are proven in \S\ref{s:defandplan-s1s3d}.
Cf.~Remark \ref{r:cordif} and
\jonly{\cite[\S7 and Corollary 1.9]{CS16o}.}
\aronly{\S\ref{s:action} below.}

\aronly{
Theorem \ref{t:gensm} has the following restatement analogous to Theorem \ref{t:s1s3sm} and to
\cite[Corollary 2.13.b]{CS16}.

\begin{Corollary} \label{r:s1s3sm}
Denote by $B_0(H_3)$ the group of symmetric bilinear forms $H_3\times H_3\to\Z$.
Then there is a surjection
$$\tau:\Z_{12}\times H_1\times H_2^{DIFF}\times B_0(H_3)\to E^7(N)\quad\text{such that}$$
$$\tau(a,b,u,l)=\tau(a',b',u',l')\quad\Leftrightarrow\quad
u=u',\quad l=l',\quad b-b'\in K_{u,l_u}\quad\text{and}\quad a-a'\in\im\eta_{u,l_u,b},$$
where $l_u:=l+\lambda\tau(0,0,u,0)$.
\end{Corollary}
}

{\bf Acknowledgments.}
We would like to thank B.~Owens for assistance with the literature on $4$-manifolds.
We would like to thank the Hausdorff Institute for Mathematics and the University of Bonn for their hospitality and support during the early stages of this project.

\comment

To apply the method above to determine the action of knots

In \cite[Description of $d$-classes for $M_f$ Lemma 4.7]{CS16} we prove that $K_{u,l}$ acts freely on the set of $d$-classes.

For $f \in \beta_{u, l}^{-1}(b)$ we show that
\[ \im(\theta_{u, l, b}) = \{\eta(\varphi,Y) \, | \,  \de(W, z) = (M, Y), Y^2 = 0 \} \]

\begin{Corollary} \label{c:iss}
\remas{prove!} There are embeddings $f_0\colon S^1 \times S^3 \to S^7$ and $f_1 \colon S^2 \times S^2 \to S^7$
such that

$\bullet$ for every embedding $g:S^4\to S^7$ we have $[f_0\#g]=[f_0]$ and $[f_1\#g]=[f_1]$ but

$\bullet$ there is an embedding $g:S^4\to S^7$ such that $[f_0\#f_1\#g]\ne[f_0\#f_1]$.
\end{Corollary}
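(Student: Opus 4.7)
The plan combines the additivity of the invariants $\varkappa, \lambda, \beta$ under embedded connected sum (\cite[Lemmas 2.3, 2.9]{CSI}) with the transformation law for $\eta$ in Addendum~\ref{l:addeta}. I would take $f_1\colon S^2 \times S^2 \to S^7$ with $\varkappa(f_1) = 2\alpha$, where $\alpha \in H_2(S^2 \times S^2)$ is primitive with $\alpha \cap \alpha = 0$ and $\rho_2\alpha = 0$; since $S^2 \times S^2$ is simply connected, the formula $|\varkappa^{-1}(u)| = \widehat u/\gcd(u,2)$ gives $\widehat{2}/\gcd(2,2) = 1$, so $I(f_1) = \Z_{12}$. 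More generally, the simply-connected classification shows that $I(f_1) = \Z_{12}$ is equivalent to $\widehat{\varkappa(f_1)} = \gcd(\varkappa(f_1), 2)$, and, subject to the constraints $u \cap u = 0$ and $\rho_2 u = 0$ on $H_2^{DIFF}(S^2 \times S^2)$, the minimal such choice is essentially forced.

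For $f_0 \colon S^1 \times S^3 \to S^7$ I would seek an embedding with $I(f_0) = \Z_{12}$. Here the principal obstacle appears: Theorem~\ref{t:s1s3sm}(c) asserts $|P_{l,b}| \geq 2$ for every parameter $(l, b)$, so under the parameterization $\tau$ no embedding has full inertia. Overcoming this obstacle would require a careful analysis of how the embedded connected sum action on $E^7(N)$ for non-simply-connected $N$ depends on the choice of an attachment basepoint, combined with an argument that for a suitably chosen $f_0$ (possibly one not captured by $\tau$, or one for which the action collapses along certain basepoint choices) the effective orbit is a single point.

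Granting such $f_0, f_1$, by additivity $\varkappa(f_0 \# f_1) = 2\alpha$ (so $d = 2$ and $\widehat d = 2$), $\lambda(f_0 \# f_1) = \lambda(f_0)$ (since $H_3(S^2 \times S^2) = 0$), and $\beta(f_0 \# f_1) = \beta(f_0) + \beta(f_1)$. Addendum~\ref{l:addeta} gives
$$\eta_{u,l,b}(f_0 \# f_1 \# a) - \eta_{u,l,b}(f_0 \# f_1) = [2a] \in \Z_{\widehat d}/\im\theta_{u,l,b},$$
so the desired $g = a \in \Z_{12}$ must satisfy $[2a] \neq 0$ in this quotient. Computing $\im\theta_{u,l,b}$ for $f_0 \# f_1$ via Theorem~\ref{t:gensm}(c) reduces to evaluating the intersection pairing on $H_3\bigl((S^1 \times S^3) \# (S^2 \times S^2)\bigr)$ against the $d$-classes supplied by $f_0 \# f_1$, and the key idea is that cross-intersection contributions across the connecting tube (invisible when $f_0$ and $f_1$ are analyzed in isolation) should force $\im \theta_{u,l,b}$ to be strictly smaller than the subgroup needed to trivialize the action. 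The main obstacle of the proof is the tension between the two hypotheses: the full inertia of $f_1$ forces $\widehat d \leq 2$ for the connected sum, in which case $[2a]$ vanishes identically in $\Z_2$, so detecting the non-trivial action on $f_0 \# f_1$ through $\eta$ is extremely delicate and likely requires either enhancing the invariants beyond those of Theorem~\ref{t:gensm} or identifying a $f_1$ whose full inertia is compatible with a larger effective value of $\widehat d$ on the connected sum.
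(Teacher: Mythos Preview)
Your identification of the principal obstacle is correct and, in fact, decisive against the statement as written: by Theorem~\ref{t:s1s3sm}(c) and the surjectivity of $\#\times\tau$, every isotopy class of embeddings $S^1\times S^3\to S^7$ lies in some $P_{l,b}$ with $|P_{l,b}|\ge 2$ (the minimum $|P_{0,b}|=2\gcd(b,6)\ge 2$ is attained when $l=0$ and $\gcd(b,6)=1$). Hence there is \emph{no} $f_0\colon S^1\times S^3\to S^7$ with $[f_0\#g]=[f_0]$ for all $[g]\in E^7(S^4)$, and the first bullet cannot hold for any choice of $f_0$. Your speculation that a basepoint-dependent subtlety might rescue the statement does not help: for a connected $N$ the embedded connected sum with a knot $S^4\to S^7$ is well-defined up to isotopy independently of the attaching point, so the action and hence the orbit size are intrinsic.

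The paper itself does not prove this corollary; it sits inside a commented-out block with the authors' note that it must be postponed. The paper's sketch chooses $\lambda(f_0)=4$ and $\varkappa(f_1)=(24,0)$, but by Theorem~\ref{t:s1s3sm}(c) and the simply-connected formula $|\varkappa^{-1}(u)|=\widehat u/\gcd(u,2)$ these choices give $|O(f_0)|=|O(f_1)|=12$, i.e.\ $I(f_0)=I(f_1)=0$, \emph{not} full inertia. The sketch then argues that $|\beta_{u,l}^{-1}(b)|>1$ for the connected sum, which is the phenomenon ``trivial inertia on the summands, nontrivial inertia on the sum''---the opposite of what the statement literally asserts. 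In short, your analysis is sound: the corollary as stated is false, and the paper's own sketch is (tacitly) aimed at a different, dual statement.
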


\begin{proof}
By Theorem \ref{t:s1s3sm}.a there are

$\bullet$ an embedding $f_0$ with $\lambda(f_0) = 4 \in B(H_3) = \Z$.

$\bullet$ an embedding $f_1$ with $\varkappa(f_1) = (24, 0) \in H_2(S^2 \times S^2) = \Z^2$.

We have $\varkappa(f_0\# f_1)=u:=(24, 0)$ and $\lambda(f_0\# f_1)=l:=8$ \remas{prove!}.
Then $\di u = 24$ and $K_{u,l} = 3H_3$.
By Theorem \ref{t:gensm}c we can find $b \in C_{u,l}$ with $\theta_{u, l, b} \neq 0$ \remas{prove!}.
Then $|\beta_{u, l}^{-1}(b)|>1$.
\end{proof}

\begin{Corollary} \label{ex:is}
\remas{prove!}
For $j = 2, 6$, there are embeddings $f_0^j \colon S^1 \times S^3 \to S^7$ and $f_1 \colon S^2 \times S^2 \to S^7$
such that $I(f_0^j) = 0 = I(f_1)$ but $I(f_0^j \# f_1) \cong \Z_{j}$.
\end{Corollary}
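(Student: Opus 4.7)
The plan is to express the inertia group via Addendum \ref{l:addeta} as
$$I(f) = \ker\bigl(\Z_{12} \to \Z_{\widehat d}/\im\theta_{u,l,b}, \ a \mapsto [2a]\bigr),$$
and then engineer $f_0^j$ and $f_1$ with trivial inertia whose sum has $\im \theta$ equal to a prescribed cyclic subgroup of $4\Z_{\widehat d}$.

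First, I would pick $f_0^j \colon S^1 \times S^3 \to S^7$ with $\varkappa(f_0^j)=0$, $\lambda(f_0^j)=l_j$, and Seifert class $b_j \in \coker(2\overline{l_j})$ satisfying $6 \mid \di b_j$; by Corollary \ref{c:gen}(b) this gives $I(f_0^j) = 0$. Then pick $f_1 \colon S^2 \times S^2 \to S^7$ with $\varkappa(f_1) = u_1 = 24 v$ for a primitive isotropic class $v \in H_2(S^2\times S^2)$; then $\widehat{u_1} = 24$, $u_1 \capM{N_1} u_1 = \sigma(N_1) = 0$, and the simply-connected classification gives $|\varkappa^{-1}(u_1)| = \widehat{u_1}/\gcd(u_1,2) = 12$, hence $I(f_1) = 0$.

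By additivity of $\varkappa, \lambda, \beta$ on $N := (S^1\times S^3)\#(S^2\times S^2)$, the sum $f := f_0^j \# f_1$ has invariants $(u_1, l_j, b)$ where $b$ is the image of $b_j$ under the natural reduction $\coker(2\overline{l_j}) \to \coker(2\rho_d \overline{l_j})$, with $d = 24$ and $\widehat d = 24$. Since $4\Z_{24} \cong \Z_6$, a direct count of $\ker(\Z_{12} \to \Z_{24}/\im\theta, \ a \mapsto [2a])$ shows $|I(f)| \in \{1, 2, 3, 6\}$ according as $\im \theta$ is $0$, $12\Z_{24}$, $8\Z_{24}$, or $4\Z_{24}$. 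Thus $I(f) \cong \Z_j$ for $j \in \{2,6\}$ iff $\im\theta_{u_1, l_j, b}$ is the cyclic subgroup of $4\Z_{24}$ of order $6/j$.

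To realize these images I would compute $\theta_{u_1, l_j, b}$ directly from the definition in \S\ref{s:defandplan-eta} and Lemma \ref{l:diff}, decomposing a null-bordism $W$ of the complement manifold into contributions from $N_0 = S^1 \times S^3$ and $N_1 = S^2 \times S^2$: the $N_1$-side contributes a Kreck-type term involving $u_1$ and $p_{W_1}$, while the $N_0$-side contributes a term involving $b_j$, $l_j$ and the intersection pairing on $N_0$. By varying $l_j$ and $b_j$ (keeping $6 \mid \di b_j$) and adjusting $u_1$ within $24 \cdot H_2(N_1)$, both target images should be realizable. The main obstacle is precisely this explicit computation of $\theta$ on the connected sum, i.e.\ showing that $\theta(y_0) \in 4\Z_{24}$ can be made to equal $12$ (for $j = 2$) or $4$ (for $j = 6$) on a generator $y_0 \in \ker(2 \rho_{24}\overline{l_j})$, while preserving $I(f_0^j) = 0$. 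This reduces to a calculation with $p_W$ and intersection numbers on $W$ (Remark \ref{r:action}) combined with the description of $d$-classes in \cite[Lemma 4.7]{CSI}.
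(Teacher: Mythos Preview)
Your proposal and the paper's own argument are in essentially the same position: both are sketches that do not complete the proof, and the decisive step---the actual computation of $\im\theta_{u,l,b}$ for the connected sum---is left undone in both.

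The statement you are trying to prove sits inside a \verb|\comment ... \endcomment| block in the paper, preceded by the authors' note ``The following results are great but not proved so have to be postponed until next version.'' The paper's sketch chooses $\lambda(f_0^j)=2j$ and $\varkappa(f_1)=(24,0)$, asserts $(u,l)=((24,0),4j)$, then claims $\ker(2\rho_{24}\overline l)=(j/2)H_3$ (flagged by the authors as ``wrong for $j=2$'') and that one can find $b$ with $\theta_{u,l,b}\ne0$ (flagged ``prove!''). So the paper's own proof is acknowledged to be incomplete and partly incorrect.

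Your sketch is cleaner in that you correctly identify $I(f)$ as the kernel of $a\mapsto[2a]$ in $\Z_{\widehat d}/\im\theta_{u,l,b}$, correctly verify $I(f_0^j)=0$ via Corollary~\ref{c:gen}(b) and $I(f_1)=0$ via the simply-connected count, and correctly reduce the problem to realizing $\im\theta_{u_1,l_j,b}$ as the subgroup of order $6/j$ in $4\Z_{24}$. But the paragraph beginning ``To realize these images I would compute $\theta_{u_1,l_j,b}$ directly\ldots'' is not a computation; it is a description of where a computation would take place. Nothing in the paper (Lemma~\ref{l:diff}, Remark~\ref{r:action}, or \cite[Lemma 4.7]{CSI}) gives a formula for $\theta(f,y)$ in terms of $u,l,b$ alone; one only knows its \emph{variation} with $b$ (Lemma~\ref{l:diff}.c). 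Determining the ``base value'' $\theta_{u,l,0}$ requires building an explicit spin null-bordism $(W,z)$ and evaluating $\overline{z^2}\cap_W(z^2-p_W^*)$, which neither you nor the authors carry out. This is precisely the indeterminacy singled out in Remark~\ref{r:at}(c) as the outstanding open problem of the paper. Until that computation is done, the corollary remains unproven.
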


\begin{proof}
For $f_0^j$ take any embedding with $\lambda(f_0^j) = 2j \in B(H_3) = \Z$
and for $f_1$ take any embedding with $\varkappa(f_1) = (24, 0) \in H_2(S^2 \times S^2) = \Z^2$.
We have $(u, l) : = (\varkappa \times \lambda)(f_0^j \# f_1) = \bigl( (24, 0), 4j \bigr)$.
In this case $d = 24$ and $K_{u,l} = (j/2) \cdot H_3$ \remas{wrong for $j=2$}.
By Theorem \ref{t:gensm}c we can find $b \in C_{u,l}$ with $\theta_{u, l, b} \neq 0$ \remas{prove!}.
By Theorem \ref{t:gensm}c for every $f \in \beta_{u, l}^{-1}(b)$, $I(f) \cong j \cdot \Z_{12}$.
\end{proof}


\endcomment


\section{Definition of the
invariants}
\label{s:defandplan-eta}

In this paper we use conventions, notation and the following definitions of \cite[\S\S 2.1, 4.1]{CS16}.

$\bullet$ $N$ is a closed connected orientable $4$-manifold with torsion free $H_1$;

$\bullet$ $f,f_0,f_1 \colon N \to S^7$ are embeddings;

$\bullet$ $C=C_f$ is the closure of the complement in $S^7$ to a sufficiently small
tubular neighborhood of $f(N)$; the orientation on $C$ is inherited from the orientation of $S^7$;

 $\nus=\nus_f:\de C\to N$ is the sphere subbundle of the normal vector bundle of $f$:
the total space of $\nus$ is identified with $\partial C$.


Consider the following diagram:
$$
\xymatrix{& H_{q-2}(N) \ar[d]^{\nus^!} \ar@{=}[r]^{\text{PD}} \ar[dr]^{\widehat A} & H^{6-q}(N) \ar[d]^{AD}\\
H_{q+1}(C,\de) \ar[r]^{\de_C} \ar@{=}[d]^{\text{PD}} &H_q(\de C) \ar[d]^{\nus} \ar[r]^{i_C} &H_q(C) \\
H^{6-q}(C)  & H_q(N) \ar[l]^{AD} \ar[ul]^{A}}
$$
Here $AD$ is Alexander duality and $A=A_f,\widehat A=\widehat A_f$ are {\it homology Alexander duality} isomorphisms.

Define
$$\varkappa[f]:=A_f^{-1}(A_f[N]\capM{C_f}A_f[N])\in H_2$$
\cite[Lemma 3.2.$\varkappa'$]{CS16}. Define
$$\lambda[f](x,y):=x\capM{N}\widehat A_f^{-1}(A_f[N]\capM{C_f}\widehat A_fy)$$
for each $x,y\in H_3$ \cite[Lemma 3.2.$\lambda'$]{CS16}.

We abbreviate the subscript $f_k$ to just $k$.
For a bundle isomorphism $\varphi:\de C_0\to\de C_1$ define a closed oriented $7$-manifold $M=M_\varphi:=C_0\cup_\varphi(-C_1)$.
We call a bundle isomorphism $\varphi:\de C_0\to\de C_1$ a {\bf $\pi$-isomorphism} if $M_\varphi$ is parallelizable.
We shall omit the phrase `for a bundle isomorphism $\varphi$' if its choice is clear from the context.

If $P$ is a (compact oriented) codimension $c$ submanifold of a manifold $Q$ and either $y\in H_k(Q)$ or
$y\in H_k(Q,\de)$, denote
$$r_{P,Q}(y)=r_P(y)=y\cap P:=\text{PD}((\text{PD}y)|_P)\in H_{k-c}(P,\de).$$
A class $Y\in H_5(M_\varphi)$ is a {\bf joint Seifert class} if $Y\cap C_k=A_k[N]$ for each $k=0,1$ \cite[Lemma 3.13.a]{CS16}.
A joint Seifert class $Y\in H_5(M_\varphi)$ is called a {\bf $d$-class} for an integer $d$ if $\rho_dY^2=0$ (or, equivalently, $Y^2\in dH_3(M_\varphi)$).

Assume that $\varkappa(f_0)=\varkappa(f_1)$ and that $\lambda(f_0)=\lambda(f_1)$.
Denote $d:=\di\varkappa(f_0)$.
By \cite[Lemmas 2.4 and 2.5]{CS16}
there is a $\pi$-isomorphism $\varphi:\de C_0\to\de C_1$ and a joint Seifert class $Y\in H_5(M_\varphi)$.
Define
$$
\beta(f_0,f_1):=[(i_{\de C_0,M_\varphi}\nus_0^!)^{-1}\rho_dY^2]\in C_{\varkappa(f_0),\lambda(f_0)}
$$
using the composition
$H_1(N;\Z_d)\xrightarrow{~\nus_0^!~} H_3(\de C_0;\Z_d)\xrightarrow{~i_{\de C_0,M_\varphi}~} H_3(M_\varphi;\Z_d).$

\smallskip
{\bf Definition of $\eta(\varphi,Y)$ for a $\pi$-isomorphism $\varphi\colon\de C_0\to\de C_1$ and a $d$-class $Y\in H_5(M_\varphi)$, of $Y_{f,y}$ and $\theta(f,y)$.}
Since $\varphi\colon \de C_0\to\de C_1$ is a $\pi$-isomorphism, $M_\varphi$ is spin.
Take any normal spin structure on $M$ given by \cite[Lemma 4.2]{CS16}.
Since $M_\varphi$ is simply-connected, a normal spin structure on $M_\varphi$ is unique.
Since $\Omega_7^{Spin}(\C P^\infty)=0$ \cite[Lemma 6.1]{KS91} there is a 8-manifold $W$ with a normal
spin structure and $z\in H_6(W,\partial)$ such that $\partial W\underset{spin}=M_\varphi$ and $\partial z=Y$.
Consider the following fragment of the exact sequence of the pair $(W,\de W)$:
$$H_4(\de W;\Z_d)\overset{i_W}\rightarrow H_4(W;\Z_d) \overset{j_W}\rightarrow H_4(W,\de;\Z_d)
\overset{\de_W}\rightarrow H_3(\de W;\Z_d).$$
Since $\partial_W\rho_d z^2=\rho_dY^2=0$, there is a class $\overline{z^2}\in H_4(W;\Z_d)$ such that $j_W\overline{z^2}=\rho_dz^2$.
Denote by $p_W^*\in H_4(W,\partial)$ the spin characteristic class \cite[\S3.1]{CS16}.
Define
$$\eta(\varphi,Y)=\eta(f_0,f_1,d,\varphi,Y):=\rho_{\widehat d}(\overline{z^2}\capM{W}
(z^2-p_W^*))\in\Z_{\widehat d}.$$
For $y\in H_3$ denote
$$
Y_{f,y}:=\partial(A_f[N]\times I)+i\widehat A_fy\in H_5(C_f \times I)\quad\text{and}\quad
\theta(f,y):=\eta(\id\de C_f,Y_{f,y})\in\Z_{\widehat d}.
$$

\begin{Lemma}[proved in \S\ref{s:eta-prb},\S\ref{s:eta-prac}]\label{l:diff}
(a) For every $f$ and $y$, $\theta(f, y)$ is divisible by $4$.

(b) The map $\theta(f,\cdot)\colon K_{\varkappa(f),\lambda(f)}\lambda(f)\to\Z_{\widehat d}$ is a homomorphism, where $d:=\di\varkappa(f)$.

(c) For every $[f_0], [f_1] \in (\varkappa\times\lambda)^{-1}(u,l)$ and $y\in K_{u,l}$, we have for $d:=\di u$ that $\theta(f_0,y)-\theta(f_1,y)=4\rho_{\widehat d}(\beta(f_0,f_1)\capM{N} y)$.
\end{Lemma}

{\bf Definition of $\theta_{u,l,b}$.}
Take any $(u,l)\in \im (\varkappa\times\lambda)$ and $b\in C_{u,l}$. Let $d:=\di u$.
Define
$$\theta_{u,l,b}:K_{u,l}\to4\Z_{\widehat d}\quad\text{by}\quad\theta_{u,l,b}(y):=
\theta(f,y), \quad\text{where}\quad [f]\in\beta_{u,l}^{-1}(b).$$
The map $\theta_{u,l,b}$ is well-defined (i.e.~is independent of the choice of $f$) and is a homomorphism by Lemma \ref{l:diff}.ab and the transitivity of $\beta$ \cite[Lemma 2.10]{CS16}.

\smallskip
{\bf Definition of $\eta(f_0,f_1)$.}
Take representatives $f_0,f_1$ of two isotopy classes in $(\varkappa\times\lambda)^{-1}(u,l)$ such that $\beta(f_0,f_1)=0$.
By \cite[Lemma 2.5]{CS16}
there is a $\pi$-isomorphism $\varphi\colon \de C_0\to\de C_1$.
By \cite[Lemma 4.1]{CS16}
there is a $d$-class $Y\in H_5(M_\varphi)$ for $d:=\di\varkappa(f_0)$.
Define
$$
\eta(f_0,f_1):=[\eta(\varphi,Y)]\in\frac{\Z_{\widehat d}}{\im\theta_{u,l,b}}.
$$
This is well-defined by \cite[Lemma 4.3.c]{CS16}
and Lemma \ref{l:welleta}.a below, and is even by \cite[Lemma 4.3.a]{CS16}.

\begin{Lemma}\label{l:traeta} Let $f_0,f_1,f_2\colon N\to S^7$  be embeddings and
$\varphi_{01}\colon \de C_0\to\de C_1$, $\varphi_{12}\colon \de C_1\to\de C_2$
$\pi$-isomorphisms and $Y_{01}\in H_5(M_{\varphi_{01}})$, $Y_{12}\in H_5(M_{\varphi_{12}})$ \ $d$-classes.
Then $\varphi_{02}\colon =\varphi_{12}\varphi_{01}$ is a $\pi$-isomorphism and there is a $d$-class
$Y_{02}\in H_5(M_{\varphi_{02}})$ such that $\eta(\varphi_{02},Y_{02})=\eta(\varphi_{01},Y_{01})+\eta(\varphi_{02},Y_{12})$.
\end{Lemma}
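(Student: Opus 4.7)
The first claim, that $\varphi_{02}=\varphi_{12}\circ\varphi_{01}$ is a $\pi$-isomorphism, is immediate because the defining conditions (bundle isomorphism preserving the chosen trivialisations) are closed under composition.

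My plan for the existence of $Y_{02}$ and the additivity of $\eta$ is to construct a spin null-bordism of $M_{\varphi_{02}}$ by cutting and pasting null-bordisms of $M_{\varphi_{01}}$ and $M_{\varphi_{12}}$. Choose spin null-bordisms $(W_{01},z_{01})$ of $(M_{\varphi_{01}},Y_{01})$ and $(W_{12},z_{12})$ of $(M_{\varphi_{12}},Y_{12})$, which exist because $\Omega_7^{\Spin}(\C P^\infty)=0$. Their boundaries contain $-C_1\subset\partial W_{01}$ and $C_1\subset\partial W_{12}$; glue $W_{01}$ to $W_{12}$ along these (identifying $-C_1$ with $C_1$ by the identity) and smooth corners to obtain $W_{02}$. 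The unglued parts of the boundaries, $C_0\subset\partial W_{01}$ and $-C_2\subset\partial W_{12}$, meet along $\partial C_1$ with gluing $\varphi_{12}\circ\varphi_{01}=\varphi_{02}$, so $\partial W_{02}=C_0\cup_{\varphi_{02}}(-C_2)=M_{\varphi_{02}}$.

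Next, I arrange that $z_{01}$ and $z_{12}$ glue to $z_{02}\in H^2(W_{02})$ whose boundary restriction is a $d$-class $Y_{02}$. After possibly replacing $W_{01}$ or $W_{12}$ by another null-bordism to ensure that the spin structures induced on the common piece $C_1$ agree, I match the restrictions $z_{0i}|_{C_1}$: these are both Seifert-type classes for $f_1$, and the ambiguity parameterised by \cite[Lemma~4.7]{CSI} lets me modify $Y_{01}$ (equivalently $z_{01}$) within its set of $d$-classes to enforce the equality $z_{01}|_{C_1}=z_{12}|_{C_1}$. A Mayer--Vietoris argument for the cover $\{W_{01},W_{12}\}$ of $W_{02}$ then produces $z_{02}$, and its boundary restriction $Y_{02}$ is a $d$-class because $\rho_d Y_{02}^2=0$ holds by cut-and-paste from the vanishing of $\rho_d Y_{0i}^2$ on each of $M_{\varphi_{01}}$ and $M_{\varphi_{12}}$.

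Finally, I compute the Kreck invariant using $(W_{02},z_{02})$. The Pontrjagin class $p_{W_{02}}^*$, the square $z_{02}^2$ and the integral lift $\overline{z_{02}^2}$ all restrict on $W_{0i}$ to the corresponding data for that piece, so additivity of the cap product along the decomposition $W_{02}=W_{01}\cup_{C_1}W_{12}$ yields
$$
\eta(\varphi_{02},Y_{02})=\eta(\varphi_{01},Y_{01})+\eta(\varphi_{12},Y_{12}).
$$
The main obstacle will be the compatibility step in the preceding paragraph: adjusting the null-bordisms so that the induced spin structures on $C_1$ agree, and simultaneously using the $d$-class flexibility of \cite[Lemma~4.7]{CSI} to align the restrictions $z_{0i}|_{C_1}$ so that a glued class $z_{02}$ exists.
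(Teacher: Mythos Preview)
Your cut-and-paste of spin null-bordisms along $C_1$ is exactly the argument the paper has in mind: it writes only that the lemma ``is proved analogously to \cite[Lemma 2.10]{CS11}, cf.~\cite[\S2, Additivity Lemma]{Sk08'} (the property that $Y_{02}$ is a $d$-class is achieved analogously to \cite[proof of Lemma 4.6]{CSI})''. So the architecture of your proposal is right and matches the intended proof.

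Two points in your compatibility paragraph need repair. First, you may not modify $Y_{01}$: it is fixed by hypothesis and $\eta(\varphi_{01},Y_{01})$ depends on it, so replacing it by another $d$-class would prove the identity with the wrong left-hand summand. (Since $C_1\subset\partial W_{01}$, the restriction $z_{01}|_{C_1}$ is determined by $Y_{01}$ alone, so changing the null-bordism rel boundary does not help either.) Fortunately the modification is also unnecessary: by Alexander duality $H^2(C_1)\cong H_4(N)\cong\Z$, and the joint-Seifert condition forces both $z_{01}|_{C_1}$ and $z_{12}|_{C_1}$ to be the same generator, so Mayer--Vietoris already yields $z_{02}$ with the correct restrictions. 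Second, the assertion that the resulting $Y_{02}$ satisfies $\rho_d Y_{02}^2=0$ ``by cut-and-paste from the vanishing of $\rho_d Y_{0i}^2$'' is exactly the step that does \emph{not} come for free in the non-simply-connected setting, and it is the one the paper explicitly flags as requiring the argument of \cite[proof of Lemma~4.6]{CSI}. Once that is supplied, your final additivity computation goes through as written.
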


This is proved analogously to \cite[Lemma 2.10]{CS11}, cf.~\cite[\S2, Additivity Lemma]{Sk08'}
(the property that $Y_{02}$ is a $d$-class is achieved analogously to \cite[\S4.3, proof of Lemma 4.6]{CS16}).

\begin{Lemma}\label{l:welleta}
Let $[f_0], [f_1] \in (\varkappa\times\lambda)^{-1}(u,l)$ be such that $\beta(f_0,f_1)=0$.
Denote $d:=\di u$.
Take any $\pi$-isomorphism $\varphi\colon \de C_0\to\de C_1$.

(a) The residue $\eta(f_0,f_1)$ is independent of the choice of a $d$-class $Y\in H_5(M_\varphi)$.

(b) If $\eta(f_0,f_1)=0$, then there is a $d$-class $Y\in H_5(M_\varphi)$ such that
$\eta(\varphi,Y)=0\in\Z_{\widehat d}$.
\end{Lemma}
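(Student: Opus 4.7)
The plan is to reduce both parts to a single identity: for a fixed $\pi$-isomorphism $\varphi \colon \de C_0 \to \de C_1$ and any two $d$-classes $Y, Y' \in H_5(M_\varphi)$, one should have
\[
\eta(\varphi, Y') - \eta(\varphi, Y) \;=\; \theta(f_0, y) \;=\; \theta_{u,l,b}(y),
\]
where $y \in \ker(2\rho_d \overline l) \subset H_3$ is the element connecting $Y$ to $Y'$ under the torsor action on the set of $d$-classes described in \cite[Lemma 4.7]{CSI}, and $b = \beta_{u,l}(f_0) = \beta_{u,l}(f_1)$ (these agree because $\beta(f_0, f_1) = 0$). Both parts of the lemma will follow readily from this identity, so the bulk of the work is to establish it.

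To prove the identity I apply the transitivity Lemma \ref{l:traeta} with $\varphi_{01} := \id_{\de C_0}$, $\varphi_{12} := \varphi$, $Y_{01} := Y_{f_0, y}$, and $Y_{12} := Y$. This produces a $d$-class $Y_{02} \in H_5(M_{\varphi_{02}}) = H_5(M_\varphi)$ satisfying
\[
\eta(\varphi, Y_{02}) \;=\; \eta(\id_{\de C_0}, Y_{f_0, y}) + \eta(\varphi, Y) \;=\; \theta(f_0, y) + \eta(\varphi, Y).
\]
I expect the main technical obstacle to be verifying that $Y_{02} = Y'$ when $y := Y' - Y$ in the torsor sense, i.e.\ that the gluing construction of $Y_{02}$ in Lemma \ref{l:traeta} (combining nullbordisms across $\de C_0$) changes $Y$ by precisely $y$ on the $C_0$-side. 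This is the same compatibility already implicit in Lemma \ref{l:diff}.b (specialising to $\varphi_{12} = \id$ and $Y_{12} = Y_{f_0, y'}$ gives additivity of $\theta(f_0, \cdot)$), and it should come out of the construction cited to \cite[Lemma 2.10]{CS11}.

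Given the identity, part (a) is immediate: the difference $\eta(\varphi, Y') - \eta(\varphi, Y)$ lies in $\im\theta_{u,l,b}$, so $[\eta(\varphi, Y)] = [\eta(\varphi, Y')]$ in $\Z_{\widehat d}/\im\theta_{u,l,b}$. For part (b), fix an arbitrary $\pi$-isomorphism $\varphi$ and any $d$-class $Y_0 \in H_5(M_\varphi)$. Combining part (a) with \cite[Lemma 4.3.c]{CSI}, the residue $[\eta(\varphi, Y_0)]$ equals $\eta(f_0, f_1) = 0$, so $\eta(\varphi, Y_0) = \theta(f_0, y_0)$ for some $y_0 \in \ker(2\rho_d \overline l)$. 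Applying the identity with $y := -y_0$ and letting $Y$ be the corresponding $d$-class (which exists by \cite[Lemma 4.7]{CSI}), we get
\[
\eta(\varphi, Y) \;=\; \eta(\varphi, Y_0) + \theta(f_0, -y_0) \;=\; \theta(f_0, y_0) - \theta(f_0, y_0) \;=\; 0,
\]
where the homomorphism property of $\theta(f_0, \cdot)$ from Lemma \ref{l:diff}.b is used. This $Y$ is the $d$-class demanded by (b).
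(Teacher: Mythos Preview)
Your approach is essentially the paper's: both parts hinge on Lemma~\ref{l:traeta} together with the description of $d$-classes in $M_{f_0}$ \cite[Lemma~4.7]{CSI}. The difference is the direction in which you apply transitivity, and this creates the obstacle you flag.

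For part~(a), you fix $Y,Y'\in H_5(M_\varphi)$, choose $y$ with ``$Y'=Y+y$'', and then apply Lemma~\ref{l:traeta} with $(\id_{\de C_0},Y_{f_0,y})$ and $(\varphi,Y)$ to obtain some $Y_{02}$; you must then check $Y_{02}=Y'$. But the torsor description of $d$-classes in \cite[Lemma~4.7]{CSI} is stated only for $M_f$, not for $M_\varphi$, so neither the definition of ``$y:=Y'-Y$'' nor the identification $Y_{02}=Y'$ is available without further work. The paper sidesteps this entirely by reversing the order: given $Y',Y''\in H_5(M_\varphi)$, apply transitivity to $(\varphi,Y')$ and the inverse of $(\varphi,Y'')$ to land in $H_5(M_{f_0})$, obtaining some $d$-class $Y$ there with $\eta(\varphi,Y')-\eta(\varphi,Y'')=\eta(\id_{\de C_0},Y)$. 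Now \cite[Lemma~4.7]{CSI} applies directly and gives $Y=Y_{f_0,y}$ for \emph{some} $y$, whence the difference equals $\theta(f_0,y)\in\im\theta_{u,l,b}$. No identification of $Y_{02}$ with a prescribed class is needed.

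For part~(b), your argument is correct in substance, but the appeal to \cite[Lemma~4.7]{CSI} to produce ``the corresponding $d$-class'' in $M_\varphi$ is again a misapplication: that lemma concerns $M_f$. What actually furnishes the needed $d$-class is Lemma~\ref{l:traeta} itself (applied with $Y_{01}=Y_{f_0,-y_0}$ and $Y_{12}=Y_0$), exactly as in the paper's proof of~(b). With that correction your~(b) matches the paper's.
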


\begin{proof}[Proof of (a)]
Take any pair of $d$-classes $Y',Y''\in H_5(M_\varphi)$.
Part (a) follows because
$$
\eta(\varphi,Y')-\eta(\varphi,Y'')\overset{(1)}= \eta(\id\de C_0,Y)\overset{(2)}= \theta(f_0,y)=\theta_{u,l,\beta(f_0,f')}(y)\in\Z_{\widehat d},
$$
where

$\bullet$ equality (1) holds for some $d$-class $Y\in H_5(M_{f_0})$ by Lemma \ref{l:traeta};

$\bullet$ equality (2) holds for some $y\in K_{u,l}$ by the description of $d$-classes \cite[Lemma 4.7]{CS16}.
\end{proof}

\begin{proof}[Proof of (b)]
Part (b) follows because
$$0\overset{(1)}=\eta(\varphi,Y')-\theta_{u,l,\beta_{u,l}(f_0)}(y)=\eta(\varphi,Y')-\theta(f_0,y)\overset{(3)}= \eta(\varphi,Y)\in\Z_{\widehat d},$$
where

$\bullet$ equality (1) holds for some $d$-class $Y'\in H_5(M_\varphi)$ and $y\in K_{u,l}$ because $\eta(f_0,f_1)=0$;

$\bullet$ equality (3) holds for some $d$-class $Y\in H_5(M_\varphi)$ by Lemma \ref{l:traeta}.
\end{proof}

\begin{Lemma}[Transitivity of $\eta$]\label{l:difeta}
For any triple of embeddings $f_0,f_1,f_2\colon N\to S^7$ having the same values of $\varkappa$- and $\lambda$-invariants and the property that that $\beta(f_0,f_1)=\beta(f_1,f_2)=0$,
we have $\eta(f_2,f_0)=\eta(f_2,f_1)+\eta(f_1,f_0)$.
\end{Lemma}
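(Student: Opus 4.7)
The plan is to deduce this from the additivity result Lemma~\ref{l:traeta}, which packages precisely this kind of transitivity at the level of $\eta(\varphi,Y)$, and then pass to the residue classes that define $\eta(f_i,f_j)$.

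First I would observe that the hypotheses $\beta(f_0,f_1)=\beta(f_1,f_2)=0$ together with the transitivity of $\beta$ (\cite[Lemma 2.10]{CSI}) yield $\beta(f_0,f_2)=0$, so that $\eta(f_2,f_0)$ is defined. Moreover, since $f_0,f_1,f_2$ all lie in $\beta_{u,l}^{-1}(b)$ for a common $b$, the three residues $\eta(f_2,f_0)$, $\eta(f_2,f_1)$, $\eta(f_1,f_0)$ live in the same quotient $\Z_{\widehat d}/\im\theta_{u,l,b}$, so the asserted equation makes sense.

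Next, choose $\pi$-isomorphisms $\varphi_{10}\colon\de C_1\to\de C_0$ and $\varphi_{21}\colon\de C_2\to\de C_1$ (these exist by \cite[Lemma 2.5]{CSI}) and $d$-classes $Y_{10}\in H_5(M_{\varphi_{10}})$ and $Y_{21}\in H_5(M_{\varphi_{21}})$ (these exist by \cite[Lemma 4.1]{CSI}), so that by definition
\[
\eta(f_1,f_0)=[\eta(\varphi_{10},Y_{10})]\quad\text{and}\quad \eta(f_2,f_1)=[\eta(\varphi_{21},Y_{21})]
\]
in $\Z_{\widehat d}/\im\theta_{u,l,b}$. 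I would then apply Lemma~\ref{l:traeta} with the roles of $0,1,2$ permuted: setting $\varphi_{20}:=\varphi_{10}\varphi_{21}\colon\de C_2\to\de C_0$, the lemma produces a $d$-class $Y_{20}\in H_5(M_{\varphi_{20}})$ with
\[
\eta(\varphi_{20},Y_{20})=\eta(\varphi_{21},Y_{21})+\eta(\varphi_{10},Y_{10})\in\Z_{\widehat d}.
\]
Since $\eta(f_2,f_0)$ is independent of both the choice of $\pi$-isomorphism (by \cite[Lemma 4.3.c]{CSI}) and of the choice of $d$-class (by Lemma~\ref{l:welleta}.a), I may compute it using precisely the pair $(\varphi_{20},Y_{20})$, giving $\eta(f_2,f_0)=[\eta(\varphi_{20},Y_{20})]$. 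Reducing the previous displayed equation modulo $\im\theta_{u,l,b}$ then yields the desired identity.

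The only non-routine step is invoking Lemma~\ref{l:traeta} correctly: one must make sure that the $d$-class $Y_{20}$ furnished there genuinely has zero square reduced mod $d$, which is asserted in the lemma (and proved, as noted, analogously to \cite[Lemma 4.6]{CSI}). Once that is granted, the rest is bookkeeping with the already-established independence of $\eta(\varphi,Y)$ on the choices of $\varphi$ and $Y$ modulo the subgroup $\im\theta_{u,l,b}$, so I do not expect any serious obstacle.
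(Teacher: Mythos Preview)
Your proposal is correct and follows essentially the same approach as the paper, which simply says ``This follows by Lemma~\ref{l:traeta}.'' You have spelled out the routine details (transitivity of $\beta$, the relabeling needed to apply Lemma~\ref{l:traeta}, and the invocation of the independence results \cite[Lemma 4.3.c]{CSI} and Lemma~\ref{l:welleta}.a) that the paper leaves implicit.
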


This follows by Lemma \ref{l:traeta}.

\begin{Theorem}[Isotopy classification]\label{l:Is}
If $\lambda(f_0)=\lambda(f_1)$, $\varkappa(f_0)=\varkappa (f_1)$, $\beta(f_0,f_1)=0$ and
$\eta(f_0,f_1)=0$, then $f_0$ is isotopic to $f_1$.
\end{Theorem}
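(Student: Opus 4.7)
The plan is to reduce to Kreck's modified surgery as in \cite{Sk10, CS11}, using $\eta(f_0,f_1)$ as the final (and new) obstruction. The goal is to produce a diffeomorphism of complements $C_0 \to C_1$ compatible with the boundary framings; by the standard complement-determines-isotopy principle for smooth codimension $\geq 3$ embeddings in the $4$-to-$7$ setting established in \cite{Sk10}, such a diffeomorphism will yield an isotopy $f_0 \simeq f_1$.

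First, since $\beta(f_0,f_1) = 0$, Part I \cite[Lemma 2.5]{CSI} produces a $\pi$-isomorphism $\varphi \colon \de C_0 \to \de C_1$. Form the closed parallelizable $7$-manifold $M_\varphi = C_0 \cup_\varphi (-C_1)$. Since $\varkappa(f_0) = \varkappa(f_1)$ and $\lambda(f_0) = \lambda(f_1)$, Part I \cite[Lemma 4.1]{CSI} yields a $d$-class $Y \in H_5(M_\varphi)$, where $d := \di \varkappa(f_0)$. By Lemma \ref{l:welleta}.b the hypothesis $\eta(f_0,f_1) = 0$ allows us to modify $Y$ (keeping $\varphi$ fixed) so that the numerical Kreck invariant satisfies $\eta(\varphi, Y) = 0 \in \Z_{\widehat d}$.

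Next, choose a spin null-bordism $(W, z)$ of $(M_\varphi, Y)$; this exists because $\Omega_7^{\Spin}(\C P^\infty) = 0$. As recalled in Remark \ref{r:action}, the Kreck invariant $\eta(\varphi, Y) \in \Z_{\widehat d}$ equals the modified surgery obstruction (relative to the normal $2$-type of $C_0$) to surgering $(W, z)$ rel boundary into an $s$-cobordism between $C_0$ and $C_1$; this identification is the main content of \cite[\S 4]{CSI} combined with Lemma \ref{l:diff}. Thus its vanishing yields such an $s$-cobordism, which the $s$-cobordism theorem identifies with $C_0 \times I$, producing a diffeomorphism $\Phi \colon C_0 \to C_1$ that extends $\varphi$ (up to isotopy). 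Composing with the tubular neighborhood identifications gives the desired isotopy $f_0 \simeq f_1$.

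The main obstacle is the surgery identification when $H_1(N) \ne 0$, since the modified surgery kernel now contains classes pulled back from $H_\ast(B\pi_1 N)$ and one must verify that $\eta(\varphi,Y)$ still captures the full obstruction in $\Z_{\widehat d}$. Most of that work is absorbed into the definition and bordism invariance of $\eta$ established in Part I \cite[\S4]{CSI} together with Lemma \ref{l:diff} of the present paper; granted those inputs, the surgery argument runs as in \cite[Isotopy Classification Theorem]{CS11}, so that the only new ingredient needed here is the way $Y$ is selected in Step 2 using $\eta(f_0,f_1) = 0$ and Lemma \ref{l:welleta}.b.
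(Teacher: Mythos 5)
Your proposal is correct and follows essentially the same route as the paper: the paper's proof simply refers to the argument for the Isotopy Classification Modulo Knots Theorem in Part I and inserts exactly the step you identify as the one new ingredient, namely using $\eta(f_0,f_1)=0$ together with Lemma \ref{l:welleta}.b to replace $Y$ by a $d$-class with $\eta(\varphi,Y)=0$. Your write-up spells out the surgery machinery more explicitly, but the logical structure and the key lemma citations match the paper's.
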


\begin{proof}
The proof is analogous to the proof of \cite[Isotopy Classification Modulo Knots Theorem 2.8]{CS16}.
We only need to replace the second paragraph of that proof by the following sentence: `Since $\eta(f_0,f_1)=0$, by Lemma \ref{l:welleta}.b we can change $Y$ and assume additionally that $\eta(\varphi,Y)=0$.'
\end{proof}

{\bf Definition of $\eta_{u,l,b}$.}
Take any $[f_0]\in \beta_{u,l}^{-1}(b)$.
Define the map
$$
\eta_{u,l,b}\colon \beta_{u,l}^{-1}(b)\to\frac{\Z_{\widehat d}}{\im\theta_{u,l,b}}\quad\text{by}
\quad \eta_{u,l,b}[f]:=\eta(f,f_0).
$$
The map $\eta_{u,l,b}$ depends on $f_0$ but we do not indicate this in the notation.

\begin{proof}[Proof of Theorem \ref{t:gensm}.c]
The property on $\theta_{u,l,b}-\theta_{u,l,b'}$ holds by Lemma \ref{l:diff}.c.
The map $\eta_{u,l,b}$ is injective by the Isotopy Classification Theorem \ref{l:Is}.
The image of this map consists of all even elements by \cite[Lemma 4.3.a]{CS16}
and Addendum \ref{l:addeta}.
\end{proof}

\comment

We also use the following lemmas proved in \cite{CS16}.

\begin{Lemma}[Transitivity of $\beta$]\label{l:Dif}
For every triple of embeddings $f_0,f_1,f_2\colon N\to S^7$ having the same values of $\varkappa$- and $\lambda$-invariants
we have $\beta(f_2,f_0)=\beta(f_2,f_1)+\beta(f_1,f_0)$.
\cite[Lemma 2.10]{CS16}
\end{Lemma}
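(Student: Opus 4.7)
The plan is to prove transitivity of $\beta$ by exhibiting a compatible family of Seifert classes across the three pairwise complement-gluings, mirroring the additivity argument used for $\eta$ in Lemma \ref{l:traeta}. Since the statement cites \cite[Lemma 2.10]{CSI}, I expect my argument to be a direct adaptation of that one, with care taken that the relative nature of $\beta$ passes correctly to the quotient $\coker(2\rho_d\overline l)$.

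First, I would unpack the definition: given two embeddings $f_i,f_j$ with the same $\varkappa$- and $\lambda$-invariants, one picks a $\pi$-isomorphism $\varphi_{ij}\colon\partial C_i\to\partial C_j$ between boundaries of the complements, forms $M_{\varphi_{ij}}:=C_i\cup_{\varphi_{ij}}(-C_j)$, chooses a $d$-class $Y_{ij}\in H_5(M_{\varphi_{ij}})$ (whose existence is \cite[Lemma 4.1]{CSI}), and reads off $\beta(f_i,f_j)\in\coker(2\rho_d\overline l)$ from the restriction of $Y_{ij}$ to one side relative to a reference Seifert class. Well-definedness with respect to the choices of $\varphi_{ij}$ and $Y_{ij}$ is already established in \cite{CSI}.

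Second, for the triple $f_0,f_1,f_2$ I would choose $\pi$-isomorphisms $\varphi_{01}\colon\partial C_0\to\partial C_1$ and $\varphi_{12}\colon\partial C_1\to\partial C_2$, and set $\varphi_{02}:=\varphi_{12}\circ\varphi_{01}$. Starting from $d$-classes $Y_{01},Y_{12}$ on $M_{\varphi_{01}}$ and $M_{\varphi_{12}}$ respectively, the standard gluing-and-cutting construction along the common boundary component (the copy of $\partial C_1$ appearing in both glued manifolds) produces a class $Y_{02}$ on $M_{\varphi_{02}}$; this is the same gluing used in the proof of Lemma \ref{l:traeta}. The defining Seifert-class data satisfy the homological identity $[Y_{02}]=[Y_{01}]+[Y_{12}]$ (viewed on the appropriate $C_i$-pieces), and one verifies that the constructed $Y_{02}$ is again a $d$-class exactly as in \cite[proof of Lemma 4.6]{CSI}.

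Third, passing the identity $[Y_{02}]=[Y_{01}]+[Y_{12}]$ to $\coker(2\rho_d\overline l)$ gives $\beta(f_0,f_2)=\beta(f_0,f_1)+\beta(f_1,f_2)$, which rearranges to the stated formula. The main obstacle will be the bookkeeping needed to ensure that the reference Seifert classes used to define $\beta$ on each of the three glued manifolds $M_{\varphi_{01}}$, $M_{\varphi_{12}}$, $M_{\varphi_{02}}$ are chosen consistently, so that the discrepancies cancel; this is addressed by invoking the independence of $\beta$ from choice of $\pi$-isomorphism \cite[\S2.3]{CSI} and then choosing the references along the common hypersurface $\partial C_1$. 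Once this compatibility is in place, the additivity is a formal consequence of the Mayer--Vietoris decomposition of $H_5$ under the decomposition $M_{\varphi_{02}}=C_0\cup(-C_1)\cup C_1\cup(-C_2)$.
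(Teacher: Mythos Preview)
The paper does not prove this lemma; the statement sits inside a commented-out block and is simply a restatement of \cite[Lemma 2.10]{CSI}, to which the reader is referred. So there is no ``paper's own proof'' to compare against here. That said, your outline has a genuine gap worth flagging.

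Your description of how $\beta$ is defined is off in a way that breaks the argument. You write that one ``chooses a $d$-class $Y_{ij}\in H_5(M_{\varphi_{ij}})$ (whose existence is \cite[Lemma 4.1]{CSI}), and reads off $\beta(f_i,f_j)$ from the restriction of $Y_{ij}$ to one side.'' But a $d$-class is by definition a joint Seifert class $Y$ with $\rho_dY^2=0$, and \cite[Lemma 4.1]{CSI} only guarantees its existence \emph{under the hypothesis} $\beta(f_i,f_j)=0$. So you cannot use $d$-classes to define $\beta$; that is circular. The invariant $\beta(f_0,f_1)$ is instead built from a \emph{joint Seifert class} $Y$ (no $d$-class condition) via the square: one has $\nu_0^!\beta = Y^2\cap\de C_0$ in $H_1$ with $\Z_d$-coefficients, and then one passes to $\coker(2\rho_d\overline l)$ (see the proof of Lemma~\ref{l:diff}.c, where this is used as ``by definition of $\beta(f_0,f_1)$'').

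Consequently your additivity step also misfires: the relation you need is not $[Y_{02}]=[Y_{01}]+[Y_{12}]$ passed to the cokernel, but rather an additivity of the \emph{squares} $Y_{ij}^2$ restricted to the common boundary piece. Your gluing-along-$\de C_1$ strategy and the composition $\varphi_{02}=\varphi_{12}\varphi_{01}$ are the right scaffolding (and indeed match the shape of the argument for $\eta$ in Lemma~\ref{l:traeta}), but the computation must track $Y^2$, and the quotient by $\im(2\rho_d\overline l)$ is what absorbs the ambiguity in the choice of joint Seifert class, not the ambiguity in the choice of $d$-class. Fixing the definition and redoing the bookkeeping for $Y^2$ on the glued manifold should recover the proof in \cite{CSI}.
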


\begin{Lemma}\label{l:piso}
If $\varkappa(f_0)=\varkappa(f_1)$ and $\lambda(f_0)=\lambda(f_1)$, then there is a $\pi$-isomorphism
$\varphi\colon \de C_0\to\de C_1$.
A $\pi$-isomorphism is unique (up to vertical homotopy through linear isomorphisms) over $N_0$.
\cite[Lemma 2.5]{CS16}
\end{Lemma}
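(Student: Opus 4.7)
The plan is to identify $\de C_i$ with the total space $S\nu_i$ of the unit sphere bundle of the normal bundle $\nu_i$ to $f_i$ in $S^7$, so that producing a $\pi$-isomorphism reduces to producing an isomorphism of oriented rank-3 bundles $\nu_0\to\nu_1$ covering $\id_N$ that is compatible with the $\pi$-structures inherited from the embeddings into $S^7$.

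For existence, I would first establish $\nu_0\cong\nu_1$ as oriented rank-3 bundles over $N$. Since $TN\oplus\nu_i$ is stably trivial, $w_2(\nu_i)=w_2(N)$ and $p_1(\nu_i)=-p_1(N)$ are independent of $i$. The Euler class $e(\nu_i)$ is Poincar\'e dual to $\varkappa(f_i)\in H_2$, so the hypothesis $\varkappa(f_0)=\varkappa(f_1)$ forces matching Euler classes. Over a 4-manifold $N$ with $H_1$ torsion free, oriented rank-3 bundles are classified by these characteristic classes via obstruction theory along the cell filtration of $BSO(3)$, so one obtains an isomorphism $\psi\colon\nu_0\to\nu_1$. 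Upgrading $\psi$ to a $\pi$-isomorphism requires further adjustment by a bundle automorphism of $\nu_1$ so that the induced structure on $\de C_0$ is carried to that on $\de C_1$; the residual obstruction is encoded by $\lambda(f_0)-\lambda(f_1)\in B(H_3)$, which vanishes by hypothesis, delivering the required $\varphi$.

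For uniqueness over $N_0$, two $\pi$-isomorphisms $\varphi,\varphi'\colon\de C_0\to\de C_1$ differ by a fibrewise self-map $\varphi^{-1}\varphi'\in\Map(N,\Aut(\nu_1))$ preserving the $\pi$-structure. Since $N_0$ has the homotopy type of a 3-complex, the obstructions to a vertical homotopy through linear isomorphisms from $\varphi^{-1}\varphi'$ to the identity lie in $H^q(N_0;\pi_q(SO(3)))$ for $1\le q\le 3$; these are killed by the preservation of the $\pi$-structure together with the torsion-freeness of $H_1$, giving the uniqueness statement. The main obstacle is to pin down the precise geometric identification of $\lambda$ as the exact obstruction to promoting an oriented bundle isomorphism to a $\pi$-isomorphism; once this identification is made, as in \cite[\S2.1]{CSI} where $\pi$-structures and $\lambda$ are set up in tandem, both existence and uniqueness reduce to standard low-dimensional obstruction theory on $N$ and $N_0$ respectively. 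This is the content of \cite[Lemma 2.5]{CSI}, whose argument we follow.
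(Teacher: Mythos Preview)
The paper does not prove this lemma; it is a direct citation of \cite[Lemma~2.5]{CSI}, and within the present text it occurs only inside a commented-out block. So there is no internal argument to compare your sketch against beyond the bare reference.

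Your obstruction-theoretic outline is the right shape, but the existence half contains a concrete error. The normal bundle $\nu_i$ has rank~$3$, so $e(\nu_i)\in H^3(N;\Z)$, whose Poincar\'e dual lies in $H_1$, not $H_2$; hence $e(\nu_i)$ cannot be Poincar\'e dual to $\varkappa(f_i)\in H_2$ as you claim. Moreover, oriented rank-$3$ bundles over a $4$-complex are governed by obstructions in $H^2(N;\pi_2BSO(3))=H^2(N;\Z_2)$ and $H^4(N;\pi_4BSO(3))=H^4(N;\Z)$, with nothing in degree~$3$ since $\pi_3BSO(3)=0$; the Euler class plays no role in this classification, and neither obstruction is what $\varkappa$ records. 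What $\varkappa$ and $\lambda$ actually control in \cite{CSI} is the extra ``$\pi$''-data on $\partial C_i$ inherited from the embedding into $S^7$ (roughly, how $S\nu_i$ is identified with a piece of the boundary of a parallelizable $7$-manifold), not the abstract bundle type. You need to extract from \cite[\S2.1--2.2]{CSI} the precise definition of $\pi$-isomorphism and verify against it that the obstruction to upgrading a bundle isomorphism is exactly the pair $(\varkappa(f_0)-\varkappa(f_1),\,\lambda(f_0)-\lambda(f_1))$, rather than assigning $\varkappa$ to the Euler class.

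The uniqueness paragraph is also too loose to stand. The group $H^3(N_0;\pi_3SO(3))\cong H^3(N_0;\Z)$ is generally nonzero, and torsion-freeness of $H_1$ does not kill it; you must say explicitly which cocycle measures the difference of two $\pi$-isomorphisms and why preserving the $\pi$-structure forces that cocycle to be a coboundary over $N_0$.
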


\begin{Lemma} \label{l:fc}
If $\lambda(f_0)=\lambda(f_1)$,\ $\varkappa(f_0)=\varkappa(f_1)$,\ $\beta(f_0,f_1)=0$ and
$\varphi\colon \de C_0\to\de C_1$ is a $\pi$-isomorphism, then there is a $\di(\varkappa(f_0))$-class for $\varphi$.
\cite[Lemma 4.1]{CS16}
\end{Lemma}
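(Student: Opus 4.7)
The plan is to first produce a joint Seifert class $Y \in H_5(M_\varphi)$ by gluing individual Seifert surfaces along $\varphi$, and then to exploit the freedom in the choice of $Y$ to achieve the congruence $\rho_d Y^2 = 0$, using the hypothesis $\beta(f_0,f_1) = 0$ to make the necessary adjustment. Throughout write $d := \di \varkappa(f_0) = \di \varkappa(f_1)$.

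First, I would choose Seifert classes $V_i \in H_5(C_i,\de C_i)$ for each $i = 0,1$ via Poincar\'e–Lefschetz duality whose images under $H_5(C_i, \de C_i) \to H_2(C_i) \cong H_2(N)$ represent $\varkappa(f_i)/d$. Since $\varphi \colon \de C_0 \to \de C_1$ is a $\pi$-isomorphism and $\varkappa(f_0) = \varkappa(f_1)$, the boundary classes $\de V_0$ and $\varphi^{-1}_*\de V_1$ lie in $H_4(\de C_0)$ and represent the same normal bundle data of the Seifert hypersurface; the equality $\lambda(f_0) = \lambda(f_1)$ ensures compatibility on the $H_3$-part transferred through the gluing. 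After modifying $V_i$ by suitable absolute classes in $H_5(C_i)$ if necessary, these two boundary classes can be matched on the nose.

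Second, the Mayer–Vietoris sequence for $M_\varphi = C_0 \cup_\varphi (-C_1)$ then yields a class $Y \in H_5(M_\varphi)$ restricting to $V_i$ on each side. To compute $Y^2 \in H_3(M_\varphi)$ I would push off $Y$ and decompose the intersection into three parts: the self-intersections $V_0 \cdot V_0$ on $C_0$ and $V_1 \cdot V_1$ on $C_1$ (both of which are divisible by $d$ since the underlying $H_2$-classes are $d$-divisible), and a boundary correction coming from the gluing data. Reducing modulo $d$ kills the two interior contributions and produces a class in $H_3(M_\varphi;\Z_d)$ that by Poincar\'e duality corresponds to an element of $H_4(N;\Z_d)$, or equivalently (via the duality setup in \cite[\S2.2]{CSI}) to an element of $\coker(2\rho_d \overline{l})$ where $l := \lambda(f_0)$.

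Third, I would identify this class with $\beta(f_0,f_1)$ by matching it with the definition of $\beta$ from \cite[\S2.3]{CSI}; the two constructions agree modulo the inherent indeterminacy $\im(2\rho_d \overline{l})$, which is exactly the freedom left to alter $Y$ by replacing $V_0$ or $V_1$ with $V_i + \alpha$ for $\alpha \in H_5(C_i)$ that still matches across $\varphi$. The hypothesis $\beta(f_0,f_1) = 0$ then furnishes such an $\alpha$ realizing a cancellation that forces $\rho_d Y^2 = 0$. The main obstacle is the bookkeeping in step three: pinning down which alterations of the $V_i$ preserve the boundary match under $\varphi$ (i.e., the exact image of the relevant map into $H_3$), and verifying that the effect of such an alteration on $\rho_d Y^2$ is precisely multiplication by $2 \rho_d \overline{l}$, so that the $\beta$-indeterminacy matches the $Y^2$-indeterminacy term for term.
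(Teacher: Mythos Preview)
The present paper does not prove this lemma: the statement is quoted (inside a commented-out block) with the attribution \cite[Lemma 4.1]{CSI}, and the proof lives entirely in Part~I. Your overall strategy --- glue Seifert classes along $\varphi$ to a joint Seifert class $Y$ via Mayer--Vietoris, observe that the contribution of each $C_i$ to $\rho_d Y^2$ vanishes because $(A_{f_i}[N])^2 = A_{f_i}\varkappa(f_i)$ is $d$-divisible, identify the residual boundary term with $\beta(f_0,f_1)$, and then use $\beta(f_0,f_1)=0$ to adjust $Y$ --- is the correct shape and matches what one can infer of the \cite{CSI} argument from this paper (see the line ``$(z\cap M_\varphi)^2\cap\de C_0=\nu_0^!\beta$ by definition of $\beta(f_0,f_1)$'' in the proof of Lemma~\ref{l:diff}.c).

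However, your first step contains a confusion that would need fixing before the argument runs. By Alexander duality $H_2(C_i)\cong H^4(N)\cong\Z$, generated by the linking sphere $S^2_{f_i}$; it is not isomorphic to $H_2(N)$. The Seifert class you want is the \emph{primitive} class $A_{f_i}[N]$, characterised by $A_{f_i}[N]\capM{C_i} S^2_{f_i}=1$, not a class representing $\varkappa(f_i)/d$. It is the \emph{square} $(A_{f_i}[N])^2=A_{f_i}\varkappa(f_i)$ that carries $\varkappa(f_i)$ and is therefore divisible by $d$; you have interchanged the roles of $V_i$ and $V_i^2$ between your steps one and two. Once this is corrected, the boundary-matching in step one is handled by the agreement of Seifert classes \cite[Lemma 3.5.a]{CSI} (as used in the proof of Lemma~\ref{l:diff}.c here) rather than by an ad~hoc modification, and your steps two and three then go through essentially as written.
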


\begin{Lemma}[Description of $d$-classes for $M_f$] \label{l:simpleY0}
A class $Y\in H_5(M_f)$ is a $d$-class if and only if $Y=Y_{f,y}$ for some
$y\in K_{\varkappa(f),\lambda(f)}$. \cite[Lemma 4.7]{CS16}
\end{Lemma}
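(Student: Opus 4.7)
The plan is to analyze $H_5(M_f)$ for the double $M_f = C_f \cup_{\id} (-C_f)$ via Mayer--Vietoris, identify joint Seifert classes as a torsor parameterized by $H_3(N)$, and then pin down which torsor elements give $d$-classes by a self-intersection computation. Here $d := \di \varkappa(f)$ and $l := \lambda(f)$.

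First, I would set up the Mayer--Vietoris sequence of the double,
$$\cdots \to H_5(C_f)^{\oplus 2} \to H_5(M_f) \to H_4(\de C_f) \to H_4(C_f)^{\oplus 2} \to \cdots,$$
combined with the Gysin sequence of the $S^2$-bundle $\de C_f \to N$ (the unit sphere bundle of the normal $3$-plane bundle of $f$), to express $H_4(\de C_f)$ and $H_5(\de C_f)$ in terms of $H_*(N)$ modulo Euler-class corrections. A joint Seifert class is by definition an element $Y \in H_5(M_f)$ whose Mayer--Vietoris boundary equals the fundamental class of $N$ embedded in $\de C_f$ via the $0$-section of the normal bundle. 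The set of such classes, when nonempty, is a torsor under the image in $H_5(M_f)$ of the Gysin image of $H_3(N)$ inside $H_5(\de C_f)$; the assignment $y \mapsto Y_{f,y}$ is the explicit realization of this torsor action, starting from a geometrically constructed base Seifert class $Y_{f,0}$ built out of a Seifert surface for $f$ and its mirror in the opposite copy of $C_f$.

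Next, I would compute the difference $Y_{f,y}^2 - Y_{f,0}^2 \in H_3(M_f)$ and its reduction modulo $d$. By construction $Y_{f,y}$ differs from $Y_{f,0}$ by a cycle carried in a neighborhood of $y$ pushed into $C_f$ and mirrored on the opposite side of the double, so the self-intersection gets a correction involving the value of the linking form of $N$ at $(y,y)$, with an overall factor of $2$ coming from the symmetric contribution of the two sides of the double meeting along $\de C_f$. Since $l \in B(H_3)$ encodes precisely this linking data via the adjoint $\overline l$, the correction takes the form $2\overline l\, y$ in an appropriate coefficient group, and reducing modulo $d$ produces the condition $2\rho_d \overline l\, y = 0$. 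Combined with the fact that $Y_{f,0}$ is a $d$-class by the existence result \cite[Lemma 4.1]{CSI}, this identifies the $d$-classes among the $Y_{f,y}$ as exactly those with $y \in \ker(2\rho_d \overline l)$. Conversely, the Mayer--Vietoris--Gysin description shows every joint Seifert class equals $Y_{f,y}$ for some $y$, giving the biconditional.

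The hard part will be the self-intersection calculation and pinning the factor of $2$ exactly. One has to track Poincar\'e duality isomorphisms carefully, distinguish intersections in the $7$-manifold $M_f$ from auxiliary intersections in $C_f$ or in $N$, and verify that the symmetric push-off in the double produces a factor of $2$ (and not $1$ or $4$). The remainder is a diagram chase through the Mayer--Vietoris and Gysin sequences, whose main subtlety is controlling Euler-class contributions via $\rho_d$; the pairing $\capM{d}$ is tailored so that these contributions vanish on the relevant subgroups, which is the role of the cokernel/kernel in its domain.
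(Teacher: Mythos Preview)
This lemma is not proven in the present paper; it is quoted from the companion paper \cite[Lemma 4.7]{CSI}, and here appears only inside a commented-out block as a reference. So there is no proof in this paper to compare against directly.

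That said, your outline is broadly on the right track but contains a concrete misconception that would derail the key computation. From the way $Y_{f,y}$ is used in this paper (see e.g.\ the proof of Lemma \ref{l:etaiy}, where $\de_W[\widehat V]=i_M\widehat A_f y$), the difference $Y_{f,y}-Y_{f,0}$ is $i_M\widehat A_f y$, i.e.\ the inclusion into $M_f$ of the Alexander dual class $\widehat A_f y\in H_5(C_f)$ sitting in a \emph{single} copy of $C_f$, not a symmetric class mirrored across both halves of the double. Consequently your explanation for the factor $2$ (``symmetric contribution of the two sides of the double'') is wrong: the $2$ comes from the cross term in
\[
Y_{f,y}^2=(Y_{f,0}+i_M\widehat A_f y)^2=Y_{f,0}^2+2\,Y_{f,0}\capM{M_f}i_M\widehat A_f y+(i_M\widehat A_f y)^2,
\]
and the identification of $Y_{f,0}\capM{M_f}i_M\widehat A_f y$ with $\overline l\,y$ (together with the vanishing mod $d$ of the remaining terms) is what produces $2\rho_d\overline l\,y$. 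If you build the argument on the mirrored picture you will get the wrong coefficient or be unable to justify it.

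Two smaller points. First, $\lambda(f)\in B(H_3)$ is an invariant of the embedding $f$, not an intrinsic ``linking form of $N$''; calling it the latter obscures where the computation lives. Second, your Mayer--Vietoris/Gysin description of the torsor of joint Seifert classes is reasonable, but you must actually verify that the difference of two joint Seifert classes lands in $i_M H_5(C_f)$ and that $\widehat A_f:H_3\to H_5(C_f)$ hits everything needed; this uses Alexander duality in $S^7$ and the structure of $H_*(\de C_f)$ more than the Gysin sequence per se.
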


\begin{Lemma}\label{l:etaeven} \cite[Lemma 4.3]{CS16}
Let $\varphi\colon \de C_0\to \de C_1$ be a $\pi$-isomorphism and $Y\in H_5(M_\varphi)$ a $d_0$-class.

(a) (Divisibility of $\eta$ by 2) The residue $\eta(\varphi,Y)\in\Z_{\widehat{d_0}}$ is even.

(b) (Change of $\eta$)
There is an embedding $g\colon S^4\to S^7$, a $\pi$-isomorphism $\varphi'\colon \de C_0\to \de C_{f_1\#g}$ and a $d_0$-class
$Y'\in H_5(M_{\varphi'})$ such that $\eta(\varphi',Y',f_0,f_1\#g,d_0)=\eta(\varphi,Y,f_0,f_1,d_0)+2$.

(c)(Change of $\varphi$)
For every $\pi$-isomorphism $\varphi'\colon \de C_0\to \de C_1$ there is a $d_0$-class $Y'\in H_5(M_{\varphi'})$ such that $\eta(\varphi',Y')=\eta(\varphi,Y)$.
\end{Lemma}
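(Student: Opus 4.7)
The plan is to unwind the definition
$$\eta(\varphi,Y) = \overline{z^2} \capM{W} \rho_{\widehat{d_0}}(z^2 - p_W^*) \in \Z_{\widehat{d_0}}$$
and attack the three parts separately: (a) by characteristic-number divisibility on a closed spin 8-manifold, (b) by an explicit cobordism computation for embedded connected sum with a knot, and (c) by producing a spin bordism between the two closed 7-manifolds that realises the difference of the two $\pi$-isomorphisms.

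For part (a), the defining formula presents $\eta(\varphi,Y)$ as a relative Pontryagin-type pairing on any spin null-bordism $W$; combined with the independence of $\eta$ of the choice of $(W,z)$ from \cite[\S4.1]{CSI} and with $\Omega_7^{\Spin}(\C P^\infty)=0$, this reduces the claim to the statement that on every closed spin 8-manifold $X$ with $z \in H^2(X;\Z)$ the Pontryagin number $(z^4 - z^2 p_W)[X]$ is even. Integrality of the twisted Dirac indices $\int_X \hat A(X)\,\mathrm{ch}(L^k)$ for $k = 1, 2$, where $L$ is the complex line bundle with $c_1(L) = z$, yields
$$2z^4 - z^2 p_W \equiv 0 \pmod{48} \qquad\text{and}\qquad 8z^4 - z^2 p_W \equiv 0 \pmod{12},$$
and combining these congruences forces $z^4 \equiv 0 \pmod 2$ and $z^2 p_W \equiv 0 \pmod 4$. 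Hence $(z^4 - z^2 p_W)[X]$ is an even integer, so even modulo $\widehat{d_0}$ for every divisor of $24$.

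For part (b), I take $g \colon S^4 \to S^7$ to be a representative of a generator of $E^7(S^4) \cong \Z_{12}$; embedded connected sum identifies $C_{f_1 \# g}$ with the boundary connected sum $C_{f_1} \natural C_g$, and choosing $\varphi'$ to agree with $\varphi$ outside a small disc exhibits $M_{\varphi'}$ as obtained from $M_\varphi$ by a specific spin cobordism $T$. A new spin null-bordism $W' := W \cup_{M_\varphi} T$ of $M_{\varphi'}$ together with a $d_0$-class $Y'$ extending $Y$ across $T$ reduce the change $\eta(\varphi',Y') - \eta(\varphi,Y)$ to a local characteristic-number contribution from $T$ alone. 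I expect this contribution to equal exactly $2 \in \Z_{\widehat{d_0}}$; the main obstacle here is identifying the generator of $E^7(S^4)$ via its standard $8$-dimensional filling and evaluating the local Pontryagin-number contribution explicitly (not merely that it is some nonzero even residue).

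For part (c), given two $\pi$-isomorphisms $\varphi, \varphi' \colon \de C_0 \to \de C_1$, the composite $\psi := \varphi' \circ \varphi^{-1}$ is a self-$\pi$-isomorphism of $\de C_1$ which, by the uniqueness clause of \cite[Lemma 2.5]{CSI}, is vertically homotopic through linear isomorphisms to the identity over $N_0$. This homotopy furnishes a spin cobordism $U$ from $M_\varphi$ to $M_{\varphi'}$, and concatenating any spin null-bordism $W$ of $(M_\varphi, Y)$ with $U$ yields a spin null-bordism $W \cup U$ of $(M_{\varphi'}, Y')$, where $Y'$ is obtained by transporting $Y$ across the collar and the extension $z$ propagates unchanged. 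The characteristic-number pairing defining $\eta$ is then unchanged, giving $\eta(\varphi',Y') = \eta(\varphi,Y)$. The main obstacle in (c) is verifying that the transported $Y'$ is a genuine $d_0$-class, which I handle using the description of all $d_0$-classes as the $\ker(2\rho_{d_0}\overline{l})$-orbit of any one $d_0$-class established in \cite[Lemma 4.7]{CSI}.
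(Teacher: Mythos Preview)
This lemma is not proven in the present paper: it is cited as \cite[Lemma 4.3]{CSI}, and in the source it appears only inside a commented-out block kept for reference. So there is no proof here to compare yours against directly. The only trace of the argument in this paper is the footnote after Lemma~\ref{l:prdiff2}, which indicates that in \cite{CSI} part (a) is obtained by an explicit mod-$2$ calculation on a specific null-bordism (the one built in \cite[Proof of Lemma 4.8]{CSI}), using the decomposition $\overline{z^2}=\widehat{z^2}+n[t]$ of Lemma~\ref{l:prdiff2}---not by index theory on closed $8$-manifolds.

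On your sketch itself, there are genuine gaps. For (a), the reduction ``this reduces the claim to closed spin $8$-manifolds'' is unjustified: $W$ has boundary $M_\varphi$, and $\eta$ is defined via the lift $\overline{z^2}\in H_4(W;\Z_d)$, not via a characteristic number on a closed manifold. Doubling $W$ along $\partial W$ yields $\eta-\eta=0$, not $2\eta$, so independence of $(W,z)$ alone does not let you close up. You would need an auxiliary closed model or a direct mod-$2$ computation on some specific $(W,z)$; the paper's footnote shows the latter is what \cite{CSI} does.

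For (b), you have not produced a proof: ``I expect this contribution to equal exactly $2$'' is the whole content of the statement, and identifying the generator of $E^7(S^4)$ together with its $8$-dimensional filling and evaluating the Pontryagin pairing is precisely the work to be done.

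For (c), the uniqueness in \cite[Lemma 2.5]{CSI} is only over $N_0$. Over the remaining $4$-ball the two $\pi$-isomorphisms may differ by a nontrivial element of the relevant $\pi_4$, so your ``cobordism $U$'' is not a product there and the class $z$ does not simply ``propagate unchanged''. Analysing that non-product piece---and showing the extra contribution to $\eta$ vanishes (or can be absorbed into the choice of $Y'$)---is exactly where the content of (c) lies; your outline does not engage with it.
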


\begin{Remark}\label{r:inman}
Recall that the inertia group $I(P)$ of a closed oriented $n$-manifold $P$ is defined to the subgroup
of homotopy spheres $\Sigma \in \Theta_n$ such that there is an orientation preserving diffeomorphism
$P \cong P \# \Sigma$.
In particular, our Corollary \ref{ex:is} can be compared with \cite[\S6]{Wi74} where examples of
$7$-manifolds $P_0$ and $P_1$ are given with $I(P_0) = I(P_1) = 0$, but $I(P_0 \# P_1) \ne 0$.
\end{Remark}

{\bf Old version of proof of Lemma \ref{l:unimzd}.}

\begin{proof}
Let us prove the `$\subset$' part.
Assume that $q$ is a divisor of $c+\Tors\coker m$.
Then there exist $s,l_1,\ldots,l_s\in\Z$ and $c_0,t_1,\ldots,t_s\in V^*$ such that $c=qc_0+t_1+\ldots+t_s$ and $l_nt_n\in \im m$ for every $n=1,\ldots,s$.
Take any $y\in\ker m$.
Since the polarization of $m$ is symmetric, we have $l_nt_n(y)=0\in\Z$, thus $t_n(y)=0$.
Hence $c(y)=qc_0(y)+(t_1+\ldots+t_s)(y)=qc_0(y)$  is divisible by $q$.

Let us prove the `$\supset$' part.
Assume that $q$ is a divisor of $c|_{\ker m}$.
The subgroup $\ker m\subset V$ is a direct summand.
Take a decomposition $V=\ker m\oplus V'$.
Since $m|_{V'}\colon V'\to\im m$ is an isomorphism, there is an element $x\in V'$ such that $c|_{V'}=m(x)|_{V'}$.
Since the polarization of $m$ is symmetric, $m(x)|_{\ker m}=0$.
Then  $c-m(x)$ coincides with $c$ on $\ker m$ and is zero on $V'$.
So $c-m(x)=qc_0$ for some $c_0\in V^*$.
Hence $d(c+\Tors\coker m)$ is divisible by $q$.
\end{proof}

\endcomment

\section{Proof of Corollaries \ref{t:fa} and \ref{c:gen}.bc}\label{s:defandplan-s1s3d}

\begin{proof}[Proof of Corollary \ref{t:fa}]
By Theorem \ref{t:gensm}.c and Addendum \ref{l:addeta} $(ii)\Rightarrow (i)$.

The other direction is implied by the following assertions.

(*) {\it If the action of knots is free and $H_1$ is torsion free, then $H_1=0$.}

(**) {\it If the action of knots is free and $H_1=0$, then $H_2=0$.}



\smallskip
{\it Proof of (*).}
By Theorem \ref{t:gensm}.a there is an embedding $f_1:N\to S^7$ and
$(u, l_1) : = (\varkappa \times \lambda)(f_1)$ is a symmetric pair.
If $H_1\ne0$, then there is a basis $\{y_1, \ldots, y_n\}$ for $H_3$ with $n>0$.
Express $l_1 = l_1^{ij}$ as a matrix with respect to this basis.
For any symmetric matrix $a^{ij}$ the pair $(u, l_1- a)$ is again a symmetric pair.
Take $a^{ij}$ to be the symmetric matrix with $a^{ij} = l_1^{ij}$ for $i \leq j$.
Then $l: = l_1 - a$ is strictly upper-triangular with respect to the chosen basis; i.e.~$l^{ij} = 0$ for $i\ge j$.
The pair $(u, l)$ is symmetric and $l(y_1) = 0$.
By Theorem \ref{t:gensm}.a there is an embedding $f$ with $(\varkappa \times\lambda)(f) = (u,l)$.

Since $y_1\in K_{u,l}$ is primitive, by Poincar\'{e} duality there is $x\in H_1$ such that $x\capM{N} y_1 = 1$.
Since the action of knots is free, by Theorem \ref{t:gensm}.c and Addendum \ref{l:addeta}
$u$ is divisible by $24$ and $\theta_{u, l, b}=0$ for every $b\in C_{u,l}$.
Then $d:=\di u$ is divisible by $24$, and so is $\widehat d$.
Hence by Theorem \ref{t:gensm}.c
$$
0-0 = \theta_{u, l,b+[x]}(y_1)-\theta_{u, l,b}(y_1) = 4\rho_{\widehat d}(x\capM{N}y_1) = 4\ne0 \in\Z_{\widehat d}.
$$
This contradiction shows that $H_1=0$.

\smallskip
{\it Proof of (**).}
Since the action of knots is free, by Theorem \ref{t:gensm}.c and Addendum \ref{l:addeta}
every element $u\in H_2^{DIFF}$ is divisible by $24$.
Since $\rho_2u = {\rm PD}w_2(N)$, we obtain $w_2(N) = 0$, so the intersection form $\capM{N}$ of $N$ is even.
If $H_2\ne0$, then the intersection form of $N$ is indefinite \cite[Theorem 1]{Do87}.
Hence by \cite[Theorem 1.2.21]{GS99}
this form is isomorphic to $m H_+\oplus n E_8$, where $H_+$ is the standard hyperbolic form with matrix $\left(\begin{matrix}0&1\\1&0\end{matrix}\right)$, and the form $E_8$ is positive definite, so $m>0$.
By \cite[Lemma 1.2.20]{GS99} $\sigma(N)$ is divisible by 4.
Then $u:=(2,\sigma(N)/2)\oplus(m-1)0\oplus n0\in H_2^{DIFF}$ and $u$ is not divisible by $24$.
This contradiction shows that $H_2=0$.
\end{proof}

\begin{Remark} \label{r:cordif}
In Corollary \ref{c:gen}.bc each of the assumptions `$u=0$' and `$u\ne0$ and $2\rho_d\overline l=0$'
can be replaced by each of the following successively weaker assumptions:

(1) $\rho_{\widehat d}K_{u,l}\subset\rho_{\widehat d}H_3$ is a direct summand, or

(2) every homomorphism $\rho_{\widehat d}K_{u,l}\to4\Z_{\widehat d}$ extends to $\rho_{\widehat d}H_3$, or

(3) there is an element $\t b\in C_{u,l}$ such that $\theta_{u,l,\t b}=0$.
\end{Remark}

Clearly,
$`\text{either }u=0\text{ or }2\rho_d\overline l=0'\ \Rightarrow (1) \Rightarrow(2).$


\begin{proof}[Proof that $(2)\Rightarrow(3)$]
Take any $b'\in C_{u,l}$.
We have $\theta_{u,l,b'}=\theta_{u,l,b'}^+\rho_{\widehat d}$ for some homomorphism
$\theta_{u,l,b'}^+\colon \rho_{\widehat d}K_{u,l}\to 4\Z_{\widehat d}$.
Extend $\theta_{u,l,b'}^+$ to a homomorphism $\rho_{\widehat d}H_3\to 4\Z_{\widehat d}$.
Since $H_3$ is free, $\rho_{\widehat d}H_3$ is a free $\Z_{\widehat d}$-module.
Hence the latter homomorphism is divisible by 4.
Then by Poincar\'e duality there is a class $x\in\rho_{\widehat d}H_1$ such that
$\theta_{u,l,b'}^+(z)=4x\capM{N}z$ for every $z\in \rho_{\widehat d}K_{u,l}$.
Let $\t b:=b'+[\t x]$, where $\t x\in \rho_dH_1$ is a lifting of $x$.
Then by Theorem \ref{t:gensm}
$$\theta_{u,l,\t b}(y)=\theta_{u,l,b'}(y)-4\rho_{\widehat d}([\t x]\capM{N} y)=
\theta_{u,l,b'}^+(\rho_{\widehat d}y)-4x\capM{N}\rho_{\widehat d}y =0
\quad\text{for every}\quad y\in K_{u,l}.$$
\end{proof}

\begin{proof}[Proof of Corollary \ref{c:gen}.bc under the assumption (3) of Remark \ref{r:cordif}]
Define the element $\beta_{u,l}'(f):=\t b-\beta_{u,l}(f)\in C_{u,l}$.
Then $\theta'_{u,l,b}=\theta_{u,l,\t b-b}$ for every $b\in C_{u,l}$, hence $\theta'_{u,l,0}=0$.
Therefore we may assume that $\beta_{u,l}$ is chosen so that $\theta_{u,l,0}=0$.

Take any $b\in C_{u,l}$ and denote $K:=4b\capM{d}K_{u,l}\subset\Z_d$.
So
$$
\gcd(d,2)\cdot|\beta_{u,l}^{-1}(b)|\overset{(1)}=\frac{\widehat d}{|\im\theta_{u,l,b}|}\overset{(2)}=
\frac{\widehat d}{|\rho_{\widehat d}K|}=[\Z_{\widehat d}:\rho_{\widehat d}K]=
\gcd(\widehat d,[\Z_d:K])=\widehat{[\Z_d:K]},
$$
where equalities (1) and (2) hold by Theorem \ref{t:gensm}.d.
Now Corollary \ref{c:gen}.bc is implied by Addendum \ref{l:addeta} and the following Lemma \ref{l:unimzd}.
\end{proof}

\begin{Lemma}\label{l:unimzd} Let $V$ be a free $\Z$-module, $d$ an integer, $\rho_d:V\to V/dV$ the reduction mod $d$ and $m\colon V\to V^*$ a homomorphism whose polarization $V\times V\to\Z$ has a symmetric mod $d$ reduction.
Then

(a) a bilinear map $\capM{d}:\coker(\rho_dm)\times \ker(\rho_dm)\to\Z_d$ is well-defined by $[x]\capM{d}y:=\rho_dx(y)$ for $x\in V^*$.   

(b) for every $c\in\coker(\rho_d m)$
$$[\Z_d:c\capM{d}\ker(\rho_dm)]=
\begin{cases} \di c &d=0,\\
\dfrac d{\ord c} &d\ne0.\end{cases}$$
\end{Lemma}


This lemma is elementary and so possibly known.
Part (a) is simple and is essentially proved in footnote \ref{f:capd}.
\jonly{Part (b) is proved in \cite[\S3]{CS16o}.}

\aronly{

\begin{proof}[Proof of part (b) for $d=0$]
We need to prove the following:

{\it Let $V$ be a free $\Z$-module and $m\colon V\to V^*$ a homomorphism whose polarization $V\times V\to\Z$ is symmetric.
Then for every $c\in\coker m$
we have $[\Z:c(\ker m)]=\di c$.}

Assume that $q$ is a divisor of $c+\Tors\coker m$.
Then there exist $s\in\Z$, $l_1,\ldots,l_s\in\Z-\{0\}$ and $c_0,t_1,\ldots,t_s\in V^*$ such that $c=qc_0+t_1+\ldots+t_s$ and $l_nt_n\in \im m$ for every $n=1,\ldots,s$.
Take any $y\in\ker m$.
Since the polarization of $m$ is symmetric, we have $l_nt_n(y)=0\in\Z$, thus $t_n(y)=0$.
Hence $c(y)=qc_0(y)+(t_1+\ldots+t_s)(y)=qc_0(y)$ is divisible by $q$.
Thus $[\Z:c(\ker m)]$ is divisible by $q$.

Assume that $q$ is a divisor of $[\Z:c(\ker m)]$.
Then $c|_{\ker m}$ is divisible by $q$.
The subgroup $\ker m\subset V$ is a direct summand.
Take a decomposition $V=\ker m\oplus V'$.
 Since $m|_{V'}\colon V'\to\im m$ is an isomorphism, there is an element $x\in V'$ such that $c|_{V'}=m(x)|_{V'}$.
Since the polarization of $m$ is symmetric, $m(x)|_{\ker m}=0$.
Then  $c-m(x)$ coincides with $c$ on $\ker m$ and is zero on $V'$.
So $c-m(x)=qc_0$ for some $c_0\in V^*$.
Thus $c+\Tors\coker m$ is divisible by $q$.
\end{proof}

\begin{proof}[Proof of part (b) for $d\ne0$]
We need to prove that $|c(\ker(\rho_dm))|=\ord c$ for every
$c\in \coker(\rho_dm)$.
(We remark that this is obvious for $\rho_dm=0$ and this case is sufficient for Corollary \ref{c:gen}.c.)

Denote $K:=\rho_d\ker(\rho_dm)\subset V/dV$.
Since the polarization $V\times V\to\Z$ of $m$ has a symmetric mod $d$ reduction,
$\im(\rho_d m)\subset K^\perp\subset V^*/dV^*$.
Since $|\im(\rho_d m)|=\dfrac{|V/dV|}{|K|}=|K^\perp|$, it follows that $\im(\rho_d m)=K^\perp$.
Now the required assertion follows because for every $c'\in V^*/dV^*$
$$|c'(K)|=\frac d{\di c'(K)}= \min\{r\ |\ rc'(K)=0\}=
\min\{r\ |\ rc'\in K^\perp\}=\ord\phantom{}_{(V^*/dV^*)/K^\perp}(c'+K^\perp).$$
\end{proof}

}


\section{Proof of Lemma \ref{l:diff}.b}\label{s:eta-prb}

Before reading the proof of Lemma \ref{l:diff} we recommend reading the idea of the proof in
\aronly{\S\ref{s:idea}.}
\jonly{\cite[\S6]{CS16o}.}

In this and the following section
$l=\lambda(f)=\lambda(f_0)=\lambda(f_1)$, $u=\varkappa(f)=\varkappa(f_0)=\varkappa(f_1)$ and $d=\di u$.
Denote $1_m:=(1,0,\ldots,0)\in S^m$, $\Delta:=1_2\times D^4\times1_1$ and $t:=S^2\times0\times S^2$.
For every $y\in H_3$ take the following objects constructed in \cite[Proof of Lemma 4.8]{CS16}:
6-manifolds $V\subset C_f$ and  $\widehat V:=V\cup_{S^2\times S^3}(S^2\times D^4\times1)$, an embedding
$v_2:S^2\times S^3\times D^2\to\Int C_f=\Int C_f\times\frac12$, 8-manifolds $W_-\subset C_f\times I$ and
$W:=W_-\cup_{S^2\times S^3\times S^2}(S^2\times D^4\times S^2)$, classes $Z\in H_6(W,\partial)$ and
$z:=Z+[\widehat V]\in H_6(W,\partial)$.
The objects are not uniquely constructed from $y$, and we allow arbitrary choices in that construction.

\smallskip
{\bf Definition of $W',W'_-$ and $i'\colon W'\to W$.}
Let
$$W'_-:= C_f-\Int\im v_2 \quad \text{and} \quad
W':=W'_-\cup_{v_2|_{S^2\times S^3\times S^1}}S^2\times D^4\times S^1$$
(the manifold $W'$ may be called the result of an $S^1$-parametric surgery along $v_2$.)
Define an embedding $W'_-\to W_-$ by $x\mapsto x\times1/2$.
We assume that this embedding and the standard embedding $S^2\times D^4\times S^1\to S^2\times D^4\times S^2$
(that is the product of the identity and the equatorial inclusion $S^1\to S^2$) fit together to give an embedding $$i'\colon W'\to W.$$
Observe that $\Delta,\widehat V\subset W'$.

\begin{Lemma}\label{l:resa} For every $y\in H_3$ we have
$$
z^2\capM{W} W_-\underset d\equiv2i_{V,W_-}(Z\cap V)\in H_4(W_-,\de)
$$
(since $\de V\subset\de W_-$, the inclusion induces a map $i_{V,W_-}\colon H_4(V,\de)\to H_4(W_-,\de)$).
\end{Lemma}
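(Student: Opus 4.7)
The plan is to compute $z^2\capM{W}W_-$ by decomposing $z=Z+[\hat V]$ and expanding
$$
z^2 \;=\; Z\capM{W}Z \;+\; 2\,(Z\capM{W}\hat V) \;+\; [\hat V]\capM{W}[\hat V] \;\in\; H_4(W,\de),
$$
then applying the restriction map $r_{W_-}\colon H_4(W,\de)\to H_4(W_-,\de)$ termwise and showing that, modulo $d$, only the cross term contributes the asserted right-hand side.

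\emph{The cross term.} Since the attached disk $D^6\times 1_2\subset\hat V$ lies entirely in the surgery handle $D^6\times S^2$, outside $W_-$, one has $\hat V\cap W_-=V$. Cap (intersection) products commute with restriction to a codimension-$0$ submanifold, so $r_{W_-}(Z\capM{W}\hat V)=i_{V,W_-}(Z\cap V)$, and multiplication by $2$ produces the desired main term.

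\emph{The two pure squared terms.} For $Z\capM{W}Z$: by construction $r_{W_-}(Z)=A[N]\times I\cap W_-$, which under the deformation retraction $C\times I\to C$ corresponds to $A[N]\in H_5(C)$. Hence $r_{W_-}(Z^2)$ is the image in $H_4(W_-,\de)$ of $A[N]\capM{C}A[N]\in H_3(C)$ (crossed with $I$). For $[\hat V]\capM{W}[\hat V]$: choose a generic parallel pushoff $\hat V'=V'\cup D^6\times 1'_2$ with $1'_2\ne 1_2\in S^2$; the two added disks are then disjoint, and a framing of $v_0(S^5)$ inherited from the parallelization of $W$ (extending through $v_3$) lets one pick $V'$ so that $r_{W_-}([\hat V]^2)$ reduces to the self-intersection $V\capM{W_-}V'$ plus cross terms between $V$ and the attaching sphere of the pushoff. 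I expect the sum of these two pure contributions to lie in $d\cdot H_4(W_-,\de)$: traced through the Alexander-duality identifications, $A[N]\capM{C}A[N]$ is a natural multiple of the Bo\'echat--Haefliger class $u=\varkappa(f)$, while $[\hat V]^2$ is controlled by the normal data of $v_0(S^5)$, and $d=\di u$ governs the relevant divisibility.

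Adding these contributions then gives $z^2\capM{W}W_-\underset d\equiv 2\,i_{V,W_-}(Z\cap V)$ in $H_4(W_-,\de)$.

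The main obstacle will be the simultaneous analysis of $r_{W_-}(Z^2)$ and $r_{W_-}([\hat V]^2)$: individually each requires non-trivial identifications through Alexander and Poincar\'e duality on $N$, but what must actually be verified is that their sum lands in $d\cdot H_4(W_-,\de)$. The cleanest route is chain-level bookkeeping that exploits the freedom in choosing both $Z$ (any lift of $A[N]\times I\cap W_-$) and the pushoff of $\hat V$, together with the fact that $\rho_d\varkappa(f)=0$. The cross-term computation and the codimension-$0$ restriction principle for intersection products are routine; all the genuine work is concentrated in this divisibility step.
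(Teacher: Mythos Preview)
Your decomposition $z^2=Z^2+2Z\capM{W}[\widehat V]+[\widehat V]^2$ and termwise restriction is exactly the paper's approach, and your treatment of the cross term matches the paper's. Your handling of $Z^2\cap W_-$ is also essentially right: the paper computes directly
\[
Z^2\cap W_-=(A_f[N]\times I)^2\cap W_-=A_f\varkappa(f)\times I\cap W_-\underset d\equiv 0,
\]
which is the divisibility-by-$d$ you anticipated.

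Where you go astray is the $[\widehat V]^2$ term. You propose a pushoff/framing argument and then expect only the \emph{sum} of the two pure terms to be divisible by $d$, flagging this as ``the main obstacle.'' In fact there is no obstacle: $[\widehat V]^2=0$ in $H_4(W,\de)$ exactly, not merely modulo $d$. The reason is that $\widehat V\subset W'$, where $W'$ is the $7$-dimensional manifold embedded via $i'\colon W'\to W$ into the $8$-manifold $W$ (see the definition of $W',W'_-,i'$ just above the lemma). Any class supported on a codimension-one submanifold has vanishing self-intersection, since a collar lets you push one copy off the other. So the two pure terms are dealt with separately and trivially; no interaction or cancellation between them is needed, and the normal-bundle/framing analysis you outline can be dropped entirely.
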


\begin{proof}
Since $\widehat V\subset W'$, we have $[\widehat V]^2=0\in H_4(W,\de)$.
Also
$$Z^2\cap W_-=(A_f[N]\times I)^2\cap W_-=A_f\varkappa(f)\times I\cap W_-\underset d\equiv0\in H_4(W_-,\de).$$
Hence
$$z^2\cap W_-=(Z+[\widehat V])^2\cap W_- \underset d\equiv 2(Z\capM{W}[\widehat V])\cap W_-=
2i_{V,W_-}(Z\cap\widehat V\cap W_-)=2i_{V,W_-}(Z\cap V).$$
\end{proof}

\begin{proof}[Proof of Lemma \ref{l:diff}.b]
In this proof a statement or a construction involving $k$ holds or is made for $k=0,1$.
Given $y_k\in K_{u,l}$ construct the manifold $W_k$ as $W$ of \cite[Proof of Lemma 4.8]{CS16}
by parametric surgery in $C_f\times[k-1,k]$.
We add the subscript $k$ to $W_-,W'_-,t,\Delta,Z,\widehat V,z$ constructed in \cite[Proof of Lemma 4.8]{CS16}.
(So unlike in other parts of this paper, a subscript 0 for a manifold does not mean deletion
of a codimension 0 ball from the manifold.)
Define
$$
W := W_0\cup_{C_f\times0}W_1\quad\text{and}\quad
W_-:=C_f\times[-1,1]-\Int\im(v_{3,0}\sqcup v_{3,1})=W_{0-}\cup_{C_f\times0}W_{1-}.
$$
The manifold $W$ just defined should not be confused with the manifolds which
were previously denoted $W$ but are now denoted $W_0$ and $W_1$.
The same remark holds for $W_-$ and for $Z,V,\widehat V,z$ constructed below.

The spin structure on $W_-$ coming from $S^7\times[-1,1]$ extends to $W$.
Clearly, we have
\linebreak
$\partial W\underset{spin}=\de(C_f\times[-1,1])\underset{spin}\cong M_f$
(for the `boundary' spin structures on $\de(C_f\times[-1,1])$ and on $M_f$).

Since $H_5(t_k\times\Delta_k)=0$, by the cohomological exact sequence of the pair $(W,W_-)$
(cf.~diagram (*) in \cite[Proof of Lemma 4.8]{CS16}), $r_{W_-}\colon H_6(W,\de)\to H_6(W_-,\de)$ is an epimorphism.
Take any
$$
Z\in r_{W_-}^{-1}(A_f[N]\times[-1,1]\cap W_-)\subset H_6(W,\de).
$$
Denote
$$
V:=V_0\sqcup V_1,\quad \widehat V:=\widehat V_0\sqcup \widehat V_1\quad\text{and}\quad z:=Z+[\widehat V]\in H_6(W,\de).
$$
Since $\de_WZ=Y_{f,0}$ and $\de_W[\widehat V_k]=i_{\de W}\widehat A_fy_k$, we have $\de z=Y_{f,y_0+y_1}$.
Thus the pair $(W,z)$ is a spin null-bordism of $(M_f,Y_{f,y_0+y_1})$.

Since $y_k\in K_{u,l}$, we have $\de z_k^2\underset d\equiv0$.
Take any $\overline{z_k^2}\in j_{W_k}^{-1}\rho_dz_k^2$.
Let
$$
\overline{z^2}:=i_{W_0,W}\overline{z_0^2}+i_{W_1,W}\overline{z_1^2}.
$$
Then $\overline{z^2}\cap W_k=\overline{z_k^2}$.
Also
$$
j_W\overline{z^2}\cap W_- = \sum\limits_{k=0}^1j_{W_k}\overline{z_k^2}\cap W_- =
\rho_d\sum\limits_{k=0}^1z_k^2\cap W_{k-}\quad\text{and}
$$
$$
\sum\limits_{k=0}^1z_k^2\cap W_{k-}\overset{(1)}\equiv 2\sum\limits_{k=0}^1i_{V_k,W_{k-}}(Z_k\cap V_k) =
2i_{V,W_-}(Z\cap V) \overset{(3)}\equiv z^2\cap W_-.
$$
Here the congruences (1) and (3) modulo $d$ hold by Lemma \ref{l:resa} and analogously to Lemma \ref{l:resa}, respectively.

Hence by the cohomological exact sequence of the pair $(W,W_-)$ with coefficients $\Z_d$
(cf.~diagram (*) in \cite[Proof of Lemma 4.8]{CS16})
$j_W\overline{z^2}-\rho_dz^2=n_0[t_0]+n_1[t_1]$ for some $n_0,n_1\in\Z_d$.
We have
$$
n_k[t_k]=(j_W\overline{z^2}-\rho_dz^2)\cap W_k=j_{W_k}\overline{z_k^2}-\rho_dz_k^2 =0\in H_4(W_k,\de;\Z_d).
$$
Therefore $n_0=n_1=0$.
So $j_W\overline{z^2}=\rho_dz^2$.

Since $\t{W_k}:=W_k-C_f\times[0,(2k-1)/3)$ is a deformation retract of $W_k$,
the inclusion $\t{W_k}\to W_k$ induces an isomorphism on $H_4$.
Clearly, $z\cap W_k=z_k$, so $z^2\cap W_k=z_k^2$.
Hence
$$
\overline{z^2}\capM{W}(z^2-p_W^*)=\sum\limits_{k=0}^1 (\overline{z^2}\cap W_k)\capM{W_k}((z^2-p_W^*)\cap W_k)=
\sum\limits_{k=0}^1 \overline{z_k^2}\capM{W_k}(z_k^2-p_{W_k}^*).
$$
So $\eta(f,\cdot)$ is a homomorphism.
\end{proof}

\section{Proof of Lemma \ref{l:diff}.ac}\label{s:eta-prac}

\begin{Lemma}\label{l:res} For every $y\in H_3$  we have:

(a) $\de(Z\cap V)=[\de\Delta]-i_{\partial C_f,\de V}\xi y\in H_3(\de V)$, where $\xi\colon N_0\to \partial C_f$
is  a weakly unlinked section for $f$ (see definition in \cite[\S2.2]{CS16});


(b) $p_W^*=2m[t]\in H_4(W,\de)$ for some $m\in\Z$.
\end{Lemma}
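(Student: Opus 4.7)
I would work at the chain level, choose $Z$ and $V$ in general position, and apply the Leibniz formula
\[
\de(Z \cap V) \;=\; (\de Z) \cap V \,-\, Z \cap (\de V) \;\in\; H_3(\de V).
\]
Since $\de V = \nu^{-1}P - v_0(S^5)$, one can arrange $v_0(S^5)$ to sit in the interior of $C_f$ and miss a representative of $Z$, so only the tube $\nu^{-1}P \subset \de C_f \subset \de W$ contributes to $Z \cap \de V$. The condition $r_{W_-}(Z) = A_f[N] \times I \cap W_-$ identifies the trace of $Z$ on $\de W = M_f$ (outside the surgery attaching region) with $A_f[N]$, and its intersection with the tube $\nu^{-1}P$ is exactly the image of the weakly unlinked section $\xi y \in H_3(\de C_f)$ under $i_{\de C_f, \de V}$, appearing with sign $-$. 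The remaining term $(\de Z) \cap V$ comes from where $\de Z$ meets $V$ through the surgery region, and by the very definition of $\Delta$ in \cite[Proof of Lemma 4.8]{CSI} this $3$-cycle equals $[\de\Delta]$. A standard orientation check pins down the signs.

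\textbf{Part (b) plan.}
The open submanifold $C_f \subset S^7$ inherits a trivialisation of its tangent bundle from the parallelisation of $S^7$, so $C_f \times I$ and a fortiori $W_-$ are parallelisable; hence $p_1(W_-) = 0 \in H^4(W_-)$. On the attached piece, $T(D^6 \times S^2) = TD^6 \oplus TS^2$ has $p_1 = 0$ since $H^4(S^2) = 0$. By K\"unneth and the fact that $H^p(D^6, S^5)$ is concentrated in degree $6$, the excision group
\[
H^4(W, W_-) \;\cong\; H^4(D^6 \times S^2,\, S^5 \times S^2)
\]
vanishes, so the cohomology exact sequence of $(W, W_-)$ forces $p_1(W) = 0 \in H^4(W)$. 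Since $W$ is spin, the half Pontrjagin class $p_W = p_1(W)/2$ is also zero under Poincar\'e--Lefschetz duality, and hence $p_W = 2m[t] \in H_4(W, \de)$ holds trivially with $m = 0$.

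\textbf{Main obstacle.}
The substantive work lies in part (a): identifying the summand $[\de\Delta]$ on the boundary of the intersection $Z \cap V$ once the parametric surgery has been performed. This requires bookkeeping how the relative cycle $Z$ meets the surgery trace and fixing all orientations to match the conventions for $\widehat V$, $Z$, and $\Delta$ of \cite[Proof of Lemma 4.8]{CSI}. Part (b) is essentially a formal consequence of the parallelisability of $C_f$ together with the vanishing of the relevant K\"unneth piece.
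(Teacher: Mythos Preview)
Your plan confuses the construction for $N=S^1\times S^3$ in Lemma~\ref{l:etaiy} with the general construction from \cite[Proof of Lemma 4.8]{CSI} that Lemma~\ref{l:res} actually refers to. In the general case the surgery is parametric: one attaches $S^2\times D^4\times S^2$ (not $D^6\times S^2$) to $W_-$ along $v_3|_{S^2\times S^3\times S^2}$, and $\de V$ contains $\im v=v(S^2\times S^3)$ (not a sphere $v_0(S^5)$). This affects both parts.

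\textbf{Part (a).} The chain $V$ lies in $C_f\times\frac12$, hence in the interior of $W$; since $\de Z\subset\de W$, the term $(\de Z)\cap V$ vanishes and cannot produce $[\de\Delta]$. Both summands come from $Z\cap\de V$. The paper first simplifies $Z\cap V=(Z\cap W_-)\cap V=(A_f[N]\times I)\cap V=A_f[N]\cap V$, so that $\de(Z\cap V)=A_f[N]\cap\de V$. The $\nu_f^{-1}P$ component gives $[\xi P]$ via \cite[Lemma 3.2.a]{CSI}. The $\im v$ component gives $[\de\Delta]$, and this cannot be arranged away by general position: the paper checks the intersection number $(A_f[N]\cap\im v)\capM{\im v}[v(S^2\times1_3)]=A_f[N]\capM{C_f}S^2_f=1$, which forces $A_f[N]\cap\im v=[v(1_2\times S^3)]=[\de\Delta]\ne0$. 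Your attempt to make this piece of $\de V$ miss $Z$ is therefore impossible.

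\textbf{Part (b).} With the correct attached piece one has
\[
H^4(W,W_-)\cong H^4(S^2\times D^4\times S^2,\ S^2\times S^3\times S^2)\cong H^0(S^2)\otimes H^4(D^4,S^3)\otimes H^0(S^2)\cong\Z,
\]
not $0$. The long exact sequence then only yields $p_W^*\in\Z\cdot[t]$, i.e.\ $p_W^*=k[t]$ for some $k\in\Z$; it does not give $p_W=0$. The assertion that the coefficient is \emph{even} is the actual content, and it is this step (carried out in \cite[Proof of Lemma 4.8]{CSI}) that your argument is missing.
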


Lemma \ref{l:res}.b is essentially proved in the proof of \cite[Lemma 4.8]{CS16}.

\begin{proof}[Proof of (a)]
The equality follows because
$$Z\cap V= (Z\cap W_-)\cap V= (A_f[N]\times I)\cap V= A_f[N]\cap V\in H_4(V,\de)\quad\text{and}$$
$$\de(A_f[N]\cap V)=A_f[N]\cap \de V=
i_{\de V}(A_f[N]\cap\im v)-i_{\de V}(A_f[N]\cap\nu_f^{-1}P)\overset{(3)}= [\de\Delta]-[\xi P].$$
Here $P$ and $v$ are defined in \cite[Proof of Lemma 4.8]{CS16}.
Equality (3) follows because

$\bullet$ $A_f[N]\cap\nu_f^{-1}P=[\xi P]$ by \cite[Lemma 3.2.a]{CS16}.


$\bullet$ $A_f[N]\cap\im v=[v(1_2\times S^3)]=[\de\Delta]$ since
$$(A_f[N]\cap\im v)\capM{\im v}[v(S^2\times1_3)] = A_f[N]\capM{C_f} v(S^2\times1_3)\overset{(2)} = A_f[N]\capM{C_f}S^2_f=1.$$
Here equality (2) holds because $v(S^2\times1_3)$ is homologous to $S^2_f$ in $C_f$.
\end{proof}

\begin{Lemma}\label{l:prdiff2} For every $y\in K_{u,l}$
there is a class $\widehat{z^2}\in H_4(W;\Z_d)$ such that

(a) $\overline{z^2}:=\widehat{z^2}+n[t]\in j_W^{-1}\rho_dz^2\subset H_4(W,\de;\Z_d)$ for some $n\in\Z_d$;

(b) $[t]^2=(\widehat{z^2})^2=0\in\Z_d$ and $[t]\capM{W}\widehat{z^2}=2\in\Z_d$.
\end{Lemma}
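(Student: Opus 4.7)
The plan is to construct $\widehat{z^2}$ geometrically by exploiting the decomposition $z = Z + [\widehat V]$ used in the proof of Lemma \ref{l:etaiy}, and then to read off the three pairings in part (b) from the attached handle $D^6 \times S^2$ that appears in the construction of $W$.

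First I would verify existence of some lift $\overline{z^2} \in j_W^{-1}(\rho_d z^2)$: since $y \in \ker(2\rho_d \overline l)$, the $d$-class criterion \cite[Lemma 4.7]{CSI} gives $Y_{f,y}^2 \underset d\equiv 0$, hence $\rho_d \partial z^2 = 0$ and a lift exists. Then I would expand $z^2 = Z^2 + 2(Z \cap \widehat V) + \widehat V^2$ and analyze each summand. The summand $Z^2 \cap W_- = A_f \varkappa(f) \times I \cap W_-$ vanishes modulo $d$ because $\varkappa(f) = u$ is divisible by $d = \di u$. The mixed term $2(Z \cap \widehat V)$ should lift to an absolute class: by Lemma \ref{l:res}(a) the boundary of $Z \cap V$ is $[\partial\Delta] - i \xi y$, and both pieces are killed in $H_3(\partial W; \Z_d)$ (the $\partial \Delta$ piece caps off inside the attached handle, while $\xi y$ bounds modulo $d$ by the hypothesis on $y$, as in the proof of Lemma \ref{l:diff}.b). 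I would then set $\widehat{z^2} := 2(Z \cap \widehat V) \in H_4(W; \Z_d)$; the residual self-intersection $\widehat V^2$, which is purely relative, differs from $j_W(\widehat{z^2}) - \rho_d z^2$ by a class in the image of $H_4(D^6 \times S^2, S^5 \times S^2; \Z_d)$, and this image is generated by $[t]$ via the dim-$4$ analog of diagram (*) in \cite[Proof of Lemma 4.8]{CSI}, giving part (a).

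For part (b), the three equalities should follow from the geometry of the surgery handle. The class $[t]$, being a surgery sphere arising from the handle $D^6 \times S^2$, has trivial normal bundle in $W$, so $[t]^2 = 0 \in \Z_d$. The intersection $[t] \cap \widehat{z^2} = 2\bigl([t] \cap (Z \cap \widehat V)\bigr)$ reduces, after general position, to a single transverse point in the handle where $[t]$ (containing $\{*\} \times S^2$) meets the core disk $D^6 \times \{1_2\} \subset \widehat V$, yielding $[t] \cap \widehat{z^2} = 2 \cdot 1 = 2$. Finally, $(\widehat{z^2})^2 = 4(Z \cap \widehat V)^2$ is divisible by $4$, and the remaining cross terms land in the $[t]$-subgroup whose self-pairing is zero, so $(\widehat{z^2})^2 \underset d\equiv 0$. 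The main obstacle will be verifying that $Z \cap \widehat V$ genuinely represents an absolute class in $H_4(W; \Z_d)$ and carrying out the transverse intersection count for $[t] \cap \widehat{z^2} = 2$ carefully inside the handle; the factor of $2$ is forced by the cross-term coefficient in $(Z + [\widehat V])^2$, but pinning down the precise geometric representative of $[t]$ from the exact sequence is where the real work lies.
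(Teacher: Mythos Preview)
Your overall strategy---expand $z^2 = Z^2 + 2(Z\cap\widehat V) + [\widehat V]^2$, note that $Z^2\cap W_-\underset d\equiv 0$ (this is Lemma~\ref{l:resa}), and then lift the cross term to an absolute class---is indeed the paper's approach. The paper makes your lift explicit at chain level: choose a representative $a\in C_4(V)$ of $Z\cap V$ with $\de a=\de\Delta-\xi P$, choose $s\in C_4(C_f\times 0;\Z_d)$ with $\de s=2\xi P$ (this is where $y\in\ker(2\rho_d\overline l)$ enters), and set
\[
\widehat{z^2}:=[\,2a-2\Delta-2\xi P\times[0,\tfrac12]+s\,]\in H_4(W;\Z_d).
\]
Your proposed $\widehat{z^2}:=2(Z\cap\widehat V)$, once the boundary is capped by $\Delta$ and $s$, is essentially this class; but writing it as a bare intersection obscures the one feature that actually matters for part~(b).

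The genuine gap is your argument for $(\widehat{z^2})^2=0$. Saying ``$4(Z\cap\widehat V)^2$ is divisible by $4$'' proves nothing in $\Z_d$ when $d$ can be $24$; and there are no ``remaining cross terms'' in the square of a single class. The paper's argument is different and purely geometric: with the explicit chain above, the support of $\widehat{z^2}$ lies in $W'\cup\de C_f\times[0,\tfrac12]\cup C_f\times 0$, and this subspace is the \emph{boundary of a connected component of $W-W'$}. Hence $\widehat{z^2}$ can be pushed off itself and $(\widehat{z^2})^2=0$ on the nose. This is why the chain-level definition (with the $-2\Delta$ and the cylinder $\xi P\times[0,\tfrac12]$ and $s$) is not mere bookkeeping: it pins the support to a bounding hypersurface. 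Your formulation $2(Z\cap\widehat V)$ gives no control on support and so cannot run this argument. For $[t]\cap_W\widehat{z^2}=2$ the paper likewise uses the explicit chain, restricting to $W_-$ where $\widehat{z^2}\cap W_-=[2a]$ and then passing to the boundary: $[t]\cap_W\widehat{z^2}=2[t]\cap_{\de W_-}[\de a]=2[t]\cap_{t\times\de\Delta}[\de\Delta]=2$.
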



The proof is given later in this section.\aronly{\footnote{Equality (3) from \cite[Proof of Lemma 4.3.a]{CS16}
also holds by \cite[Proof of Lemma 4.7]{CS16} and
by Lemma \ref{l:prdiff2}.a for $d=2$ because
$\eta'(\id\de C_{f_0},Y_{f_0,y})= \overline{z^2}\capM{W} z^2=\overline{z^2}\capM{W}\overline{z^2} =2[t]\capM{W}\widehat{z^2}=0\in\Z_2$.}}

\begin{proof}[Proof of Lemma \ref{l:diff}.a]
The lemma follows by \cite[Lemma 4.8]{CS16} and Lemmas
\ref{l:res}.b, \ref{l:prdiff2}.
Indeed,
$$\overline{z^2}\capM{W}(z^2-p_W^*)=
\overline{z^2}\capM{W}\overline{z^2}-\overline{z^2}\capM{W} p_W^*\overset{(2)}= (\widehat{z^2}+n[t])^2-(\widehat{z^2}+n[t])\capM{W}2m[t]\overset{(3)}=4n-4m.$$
Here

$\bullet$ equality (2) holds by Lemma \ref{l:res}.b and property (a) of Lemma \ref{l:prdiff2},

$\bullet$ equality (3) holds by property (b) of Lemma \ref{l:prdiff2}.
\end{proof}

In the proof of Lemma \ref{l:diff}.c we will use not only the statement of Lemma \ref{l:prdiff2} but also
the following definition, which is also used in the proof of Lemma \ref{l:prdiff2}.

\smallskip
{\bf Definition of $a,s,\widehat{z^2}$ for $y\in K_{u,l}$.}
By Lemma \ref{l:res}.a there is a representative
$$a\in C_4(V)\quad\text{of}\quad Z\cap V\in H_4(V,\de)\quad\text{such that}\quad\de a=\de\Delta-\xi P.$$
(Such a representative is obtained from a representative $a'\in C_4(V)$ of $Z\cap V\in H_4(V,\de)$ such that $\de a'=\de\Delta-\xi P+\de a''$ for some $a''\in C_4(\de V)$ by the formula $a:=a'-a''$.)

Since $y\in K_{u,l}$, by \cite[Lemma 3.2.$\lambda$]{CS16}
there is a chain
$$s\in C_4(C_f\times0;\Z_d)\quad\text{such that}\quad\de s=2\xi P\times0.$$
Define
$$\widehat{z^2}:=[2a-2\Delta-2\xi P\times[0,\frac12]+s]\in H_4(W;\Z_d).$$

\begin{proof}[Proof of Lemma \ref{l:prdiff2}]
We have
$$\rho_dz^2\cap W_-\overset{(1)}=2\rho_di_{V,W_-}(Z\cap V)=
[2a]_{W_-,\de}=[2a-2\xi P\times[0,\frac12]+s]_{W_-,\de}=\widehat{z^2}\cap W_-=j_W\widehat{z^2}\cap W_-,$$
where equality (1) follows by Lemma \ref{l:resa}.
Hence by the cohomology exact sequence of
the pair $(W,W_-)$ (cf.~diagram (*) in \cite[Proof of Lemma 4.8]{CS16})
$\rho_dz^2=j_W(\widehat{z^2}+n[t])$ for some $n\in\Z_d$.
Thus property (a) holds.

Let us prove property (b).
We have $[t]^2=[S^2\times 0\times S^2]\capM{S^2\times D^3\times S^2}[S^2\times 1_3\times S^2]=0$.
Since the support of $\widehat{z^2}$ is in $W'\cup\de C_f\times[0,\frac12]\cup C_f\times0$ and this space is the boundary of a connected component of $W-W'$, we have $(\widehat{z^2})^2=0$.
Also
$$[t]\capM{W}\widehat{z^2}=[t]\capM{W_-} (\widehat{z^2}\cap W_-)=[t]\capM{W_-}[2a]_{W_-,\de}=
2[t]\capM{\de W_-}[\de a]=2[t]\capM{t\times\de\Delta}[\de \Delta]=2.$$
Here the homology classes are taken in the space indicated under `$\cap$'
(so $[t]$ has different meaning in different parts of the formula),
and $\widehat{z^2}\cap W_-=[2a]_{W_-,\de}$ is proved in the proof of (a).
\end{proof}

\begin{proof}[Proof of Lemma \ref{l:diff}.c]
Take any bundle isomorphism $\varphi \colon \de C_0\to\de C_1$ given by \cite[Lemma 2.5]{CS16}.
Take a closed oriented 3-submanifold $P\subset N$ realizing $y\in H_{f_0}=H_{f_1}$.
For $k=0,1$ construct the maps $v_{jk}$, $j=0,1,2,3$, manifolds $V_k\subset C_k$, $\widehat{V_k}$, $W_k'$ and $W_k$,
chains $a_k,s_k$ and classes $Z_k,z_k,\widehat{z_k^2}$ as in \cite[Proof of Lemma 4.8]{CS16}
and above.
(So unlike in other parts of this paper, subscript 0 of a manifold does not mean deletion
of a codimension 0 ball from the manifold.)
Define
$$W := W_0\cup_{\varphi\times \id I:\de C_0\times I\to\de C_1\times CS16}W_1.$$
The manifold $W$ just defined should not be confused with
the manifolds which were previously denoted $W$ but are now denoted $W_0$ and $W_1$.
The same remark holds for $z,Z,\widehat V$ constructed below.

Consider the following segment of the (`cohomological') Mayer-Vietoris sequence:
$$H_6(W,\de)
\xrightarrow{~r_{W_0} \oplus r_{W_1}~}
H_6(W_0,\de)\oplus H_6(W_1,\de)
\xrightarrow{~r_0\oplus(-r_1)~}
H_4(\de C_0).$$
Here $r_k$ is the composition
$H_6(W_k,\de)\xrightarrow{~\de~} H_5(\de W_k)\xrightarrow{~r_{\de C_0}~} H_4(\de C_0)$.
We have
$$r_kZ_k= (\de Z_k)\cap\de C_0= Y_{f_k}\cap\de C_0=\de(Y_{f_k}\cap C_k)\overset{(4)}=
\de A_k[N]\overset{(5)}=\de A_{1-k}[N]\overset{(6)}= r_{1-k}Z_{1-k}\in H_4(\de C_0).$$
Here

$\bullet$ equality (4) holds by descriptions of of joint Seifert classes \cite[Lemma 3.13.a]{CS16};

$\bullet$ equality (5) holds by agreement of Seifert classes \cite[Lemma 3.5.a]{CS16};

$\bullet$ equality (6) holds analogously to the previous set of equalities.

Hence there exists $Z\in H_6(W,\de)$ such that $Z\cap W_k=Z_k$.
Denote
$$\widehat V:=\widehat V_0\bigcup\limits_{\varphi\colon \nu_0^{-1}P\to\nu_1^{-1}P}\widehat V_1\subset W'\quad\text{and}\quad
z:=Z+[\widehat V]\in H_6(W,\de).$$
Clearly,  $z\cap W_k=z_k$.\aronly{\footnote{Note that it is not assumed either that $(W,z)$ is a spin null-bordism of anything or that $\rho_d\de z^2=0$.}}

Take $\widehat{z_k^2}\in H_4(W;\Z_d)$ given by Lemma \ref{l:prdiff2}.
Then by Lemmas \ref{l:res}.b and \ref{l:prdiff2}
$$\overline{z_k^2}\capM{W} p_W^*=4m_k=\widehat{z_k^2}\capM{W} p_W^*\quad\text{and}\quad
\overline{z_k^2}\capM{W} z_k^2=\overline{z_k^2}\capM{W}\overline{z_k^2}=4n_k=
2\widehat{z_k^2}\capM{W} \overline{z_k^2}=2\widehat{z_k^2}\capM{W} z_k^2.$$
Hence
$$\eta(f_k,y)=\rho_{\widehat d}(\widehat{z_k^2}\capM{W_k}(2z_k^2-p_{W_k}^*))=
\rho_{\widehat d}(\widehat{z_k^2}\capM{W}(2z^2-p_W^*)).$$
Take a weakly unlinked section $\xi_0\colon N_0\to\de C_0$ of $f_0$.
By \cite[Lemma 3.4]{CS16}
$\xi_1:=\varphi\xi_0$ is an unlinked section of $f_1$.
Hence
$$\de a_1-\de\Delta_1=-\xi_1P=-\xi_0P=\de a_0-\de\Delta_0\quad\text{and}\quad \de s_1=2\xi_1P=2\xi_0P=\de s_0.$$
Identify $M_\varphi$ with $M_\varphi\times0\subset\de W$ and subsets of $M_\varphi$
with the corresponding subsets of $W$.
Denote
$$\widehat a:=[\Delta_0-a_0+a_1-\Delta_1]\in H_4(\widehat V;\Z_d)
\quad\text{and}\quad s:=[s_0-s_1]\in H_4(M_\varphi;\Z_d).$$
Then by the definition of $\widehat{z_k^2}$
$$\widehat{z_0^2}-\widehat{z_1^2}=i_\varphi s-2i\widehat a,
\quad\text{where}\quad i_\varphi:=i_{M_\varphi,W}\quad\text{and}\quad  i:=i_{\widehat V,W}.$$
We have $i_\varphi s\capM{W} p_W^* =s\capM{M_\varphi} p_{M_\varphi}^*=0$.

Since
$$(z\cap M_\varphi)\capM{M_\varphi}S^2_{f_0}=(\de z_0\cap C_0)\capM{C_0}S^2_{f_0}=Y_{f_0}\capM{C_0}S^2_{f_0}=1,$$
$z\cap M_\varphi$ is a joint Seifert class for $\varphi$.
Then
$$i_\varphi s\capM{W} z^2=(s\cap\de C_0)\capM{\de C_0}(z^2\cap\de C_0)\overset{(2)}=
2\xi_0 y\capM{\de C_0}\nu_0^! \beta= 2\beta\capM{N} y,$$
where

$\bullet$ $\beta\in H_1(N;\Z_d)$ is a lifting of $\beta(f_0,f_1)$;

$\bullet$ equality (2) follows because we have $s\cap\de C_0=2[\xi_0 P]=2\xi_0 y$ and because we have the identity
$z^2\cap\de C_0=(z\cap M_\varphi)^2\cap\de C_0=\nu_0^! \beta$ by the definition of $\beta(f_0,f_1)$.

We have
$$z^2\capM{W} i\widehat a \overset{(1)}=
(Z+[\widehat V])^2\capM{W} i\widehat a\overset{(2)}=
Z^2\capM{W} i\widehat a+2Z\capM{W}[\widehat V]\capM{W} i\widehat a\overset{(3)}=$$
$$=(Z\cap\widehat V)^2_{\widehat V}\capM{\widehat V}\widehat a+2i(Z\cap\widehat V)\capM{W} i\widehat a\overset{(4)}=
(\widehat a)^3_{\widehat V}+2(i\widehat a)^2\overset{(5)}=(\widehat a)^3_{\widehat V},$$
where

$\bullet$ equality (1) follows by the definition of $z$;

$\bullet$ equalities (2) and (5) follow because $\widehat V\subset W'$, so $[\widehat V]^2=0$ and $(i\widehat a)^2=0$;

$\bullet$ equality (3) is obvious;

$\bullet$ equality (4) follows because $Z\cap\widehat V=\widehat a$ by the definition of $a_0,a_1,\widehat a$.

Therefore
$i\widehat a\capM{W}(2z^2-p_W^*) = 2(\widehat a)^3_{\widehat V}-\widehat a\cap_{\widehat V}p_{\widehat V}^* \underset{12}\equiv0$ by \cite[Theorem 5]{Wa66}.

Now the lemma follows because
$$(\widehat{z_0^2}-\widehat{z_1^2})\capM{W}(2z^2-p_W^*)=
2i_\varphi s\capM{W} z^2-i_\varphi s\capM{W} p_W^*-2i\widehat a\capM{W}(2z^2-p_W^*)
\underset{24}\equiv 4\beta\capM{N} y. $$
\end{proof}

\comment



to eta-invariant

We have
$$H_s(\de C_1\times I,U_1)\overset\ex\cong H_s(\Sigma_1,\de)\cong H^{7-s}(\Sigma_1)=0\quad\text{for}\quad s=2,3,4,6.$$
Hence from the exact sequence of pair $(\de C_1\times I,U_1)$ we obtain that the inclusion
$H_s(U_1)\to H_s(\de C_1\times I)$ is an isomorphism for $s=3,4$, a monomorphism for $s=5$ and
an epimorphism for $s=6$.
Looking at the inclusion-induced mapping of the exact sequences of pairs
$(M_\varphi,C_0)$ and $(V,C_0\times I)$, by the 5-lemma we obtain that the inclusion $M_\varphi\to V$
induces an isomorphism in $H_s(\cdot)$ for $s=3,4,5,6$ (for $s=5$ latter using 5-lemma in an improved form).
How to arrive to $r_{M_\varphi}\de$?

{\it Attempt of the proof of the second assertion.}
Since $H_4(S^2\tilde\times S^5)=0$, analogously to \cite[Proof of the Independence Lemma]{Sk08'} and
Lemma \ref{l:cobordbeta} we obtain that for every $x\in H_3$

{\it the triple $(M_{\varphi'},Y_4',Y_3')$ is cobordant to $(M_\varphi,Y_4,Y_3)\sqcup(S^2\tilde\times S^5, A,0)$ for some joint Seifert classs $Y_s\in H_{s+1}(M_\varphi)$ and $Y_s'\in H_{s+1}(M_{\varphi'})$, where $Y_4=Y$ and $Y_3$, $Y_3'$ are for $x$.}

Since $Y_4=Y$ is a $d$-class, we have $\rho_d((Y_4')^2\cap Y_3')=\rho_d(Y_4^2\cap Y_3+A^2\cap0)=0$
for every $x\in H_3$ and some $Y_3'\in H_4(M_\varphi,C_1)$ for $x$.
Hence??? $\rho_dj_{M_\varphi,C_0}(Y_4')^2=0\in H_3(M_\varphi,C_0)$.
Therefore $(Y_4')^2\in \im i_{M_\varphi,C_0}$.
Although $\beta(f_0,f_1)=0$, analogously to the well-definition of $\beta$ (Lemma \ref{l:webeta})
we do not obtain that $\rho_d(Y_4')^2=0$.
Now, we obtain that for every $x\in H_3$ there is a ? $Y_3'$ for $x$ such that $\rho_d((Y_4')^2\cap Y_3')=0$. But this does not imply that $\rho_d(Y_4')^2=0$!
Unlike the proof of the transitivity!

 $r_{M_\varphi}\de:H_s(V,\de)\to H_{s-1}(M_\varphi)$ is an isomorphism for $s=4,6$.
 can also be proved by mapping Mayer-Vietoris sequences induced by $r_{M_\varphi}\de$.)

Consider (the Poincar\'e dual of) the Mayer-Vietoris sequence for $V$:
$$H_6(U_1,\de)\overset{i_{V,U_1}}\to H_6(V,\de)\overset{r_{V,C_0\times I}\oplus r_{V,C_1\times I}}
\to H_6(C_0\times I,\de)\oplus H_6(C_1\times I,\de)
\overset{-r_{\de C_1\times I,U_1}\overline\alpha\de\oplus r_{\de C_1\times I,U_1}\de}\to H_5(U_1,\de).$$
By the agreement of Seifert classs (Lemma \ref{l:Agr}.a) $\varphi\partial A_0=\partial A_1$ and
$\varphi'\partial A_0=\partial A_1$.
Hence there is a class
$$\overline Y\in H_6(V,\partial)\quad\text{such that}\quad
\overline Y\cap (C_k\times I)=A_k[N]\times I\in H_6(C_k\times I,\partial)\quad\text{for each}\quad k=0,1.$$
Then by  \ref{l:desei}.a $Y:=\de\overline Y\cap M_\varphi$ and $Y':=\de\overline Y\cap M_{\varphi'}$ are joint Seifert classs.
The choice of $\overline Y$ is in $H_6(U_1,\de)\cong H^1(U_1)$.
We have
$$H^s(\de C_1\times I,U_1)\overset\ex\cong H^s(\Sigma_1,\de)\cong H_{7-s}(\Sigma_1)=0\quad\text{for}\quad s=1,2.$$
Hence from the exact sequence of pair $(\de C_1\times I,U_1)$ we obtain that the restriction
$H^1(\de C_1\times I)\to H^1(U_1)$ is an isomorphism.
In dual form this means that $r_{\de C_1\times I,U_1}:H_6(\de C_1\times I,\de)\to H_6(U_1,\de)$ is an isomorphism.

For every $y\in H_3$ let $\overline Y_y:=\overline Y+i_{V,U_1}(\nu^!y\times I)\cap U_1$.
Analogously to Lemma \ref{l:desei}.c
$\overline Y_y^2-\overline Y^2=2i_{V,U_1}(\nu^!y\overline{\lambda(f_0)}(y)\times I)\cap U_1$...

\endcomment

\aronly{

\section{Idea of the proof of Lemma \ref{l:diff}}\label{s:idea}

Here we present Lemma \ref{l:etaiy} which we include for expositional purposes.
For $N = S^1 \times S^3$, this lemma introduces the constructions used in the proof of Lemma \ref{l:diff}.
It can also be used to simplify the proof of Theorem \ref{t:s1s3sm}.
However, we do not present this simplification here.
Hence Lemma \ref{l:etaiy} is not used in the remainder of the paper.

The standard embedding $\tau_0\colon S^1\times S^3\to S^7$ is defined in \cite[\S2.4]{CS16}.

\begin{Lemma}\label{l:etaiy} $\eta(\tau_0,y)=0$ for every $y\in H_3(S^1\times S^3)$.
\end{Lemma}

\begin{proof}
Define an extension
$$\inc\colon D^2\times D^4\to S^7\quad\text{of $\tau_0$ by}\quad \inc(x,y):=(y\sqrt{2-|x|^2},0,0,x)/\sqrt2.$$
Take an embedding $v_0\colon S^5\to S^7-\inc(S^1\times D^4)$ whose linking coefficient with
$\inc(S^1\times D^0)$ is equal to   $y\capM{S^1\times S^3}[S^1\times1_3]$.
We omit the subscript $\tau_0$ in this proof.
As
$\lk(\widehat Ay,\tau_0(S^1\times1_3))=y\capM{S^1\times S^3}[S^1\times1_3]$, we have
$\widehat Ay=[v_0(S^5)]\in H_5(C)\cong\Z$.
We also have $\widehat Ay=i_{C}\nu^!y$.
Take a representative $P$ of $y$ and a chain
$$
V\in C_6(C)\quad\text{such that}\quad \de V=\nu^{-1}P-v_0(S^5).
$$
Since $C$ is parallelizable, $v_0$ extends to an embedding $v_2\colon S^5\times D^2\to\Int C=\Int C\times\frac12$
which is orientation-preserving and transversal to $V$ and such that $\im v_2\cap V=v_0(S^5)$.
Extend $v_2$ to an orientation-preserving embedding $v_3\colon S^5\times D^3\to \Int(C\times I)$.
Let
$$
W_-:= C\times I-\Int\im v_3\quad\text{and}\quad W:=W_-\cup_{v_3|_{S^5\times S^2}}D^6\times S^2.$$
Consider the cohomology exact sequence of the pair $(W,W_-)$ in the following Poincar\'e dual form
(analogous to the sequence (*) in \cite[Proof of Lemma 4.8]{CS16}):
$$\xymatrix{
H_6(D^6\times S^2
) \ar[r]  & H_6(W,\de) \ar[r]^{r_{W_-}} & H_6(W_-,\de) \ar[r] & H_5(
D^6\times S^2) \\
H^2(W,W_-)\ar[u]_{\cong}^{PD\circ\ex}  & & & H^3(W,W_-)\ar[u]_{\cong}^{PD\circ\ex}
}.$$
Since $H_5(D^6\times S^2)=0$, the map $r_{W_-}$ is an epimorphism.
Take any
$$Z\in r_{W_-}^{-1}(A[N]\times I\cap W_-)\subset H_6(W,\partial).$$
Denote
$$\widehat V:=V\cup D^6\times1_2\quad\text{and}\quad z:=Z+[\widehat V]\in H_6(W,\partial).$$
Since $H_5(D^6\times S^2)=0$, the spin structure on $W_-$ coming from $S^7\times I$ extends to $W$.
Clearly, $\partial W\underset{spin}=\partial(C\times I)\underset{spin}=M$
(for the `boundary' spin structure on $M$ coming from $C\times I$).
Since
$$\de_WZ=\de_{C\times I}(A[N]\times I)=Y_0\quad\text{and}\quad
\de_W[\widehat V]=[\nu^{-1}P\times\frac12]=i_M\widehat Ay,
\quad\text{we have}\quad \de_W z=Y_y.$$
By \cite[Lemma 4.7]{CS16}
$\de_Wz^2=Y_y^2=0$.
So $z^2\in\im j_W$.
Analogously to (*) we obtain an isomorphism $H_4(C\times I)\cong H_4(W)$ commuting with
$i_{C\times I}\colon H_4(M)\to H_4(C\times I)$ and $i_W\colon H_4(M)\to H_4(W)$.
Since $i_{C\times I}$ is onto, $i_W$ is onto.
Hence $j_W=0$.
Thus $z^2=0$.
So we take $\overline{z^2}:=0$ and obtain that $\eta(\tau_0,y)=\overline{z^2}\capM{W}(z^2-p_W^*)=0$.\footnote{We essentially proved that if $\widehat A_fy$ is spherical, then $\eta(f,y)=0$.}
\end{proof}

\section{Discussion of the action of knots}\label{s:action}

\begin{Remark}[The action of knots in Theorem \ref{t:gensm}] \label{r:at}
(a) Take any $[f]\in E^7(N)$.
Let
$$
O(f)=O([f]):=\{[f\#g]\ :\ [g]\in E^7(S^4)\}
$$
be  the orbit of $[f]$ under the action of $E^7(S^4)$.
We have $O(f)=\beta_{u,l}^{-1}(b)$ when $[f]\in \beta_{u,l}^{-1}(b)$ by \cite[Theorem 1.2]{CS16} and
the additivity of $\varkappa,\lambda$ and $\beta$ \cite[Lemmas 2.3 and 2.9]{CS16}.

Define the {\it inertia group} of $f$, $I(f) \subset E^7(S^4)=\Z_{12}$,\footnote{The inertia group of $f$ is just the stabilizer of $[f]$ under the action of $E^7(S^4)$.
We use the word `inertia' following its use for the action of the group homotopy spheres on
the diffeomorphism classes of smooth manifolds: see the last paragraph of Remark \ref{r:general}.}
to be the subgroup of isotopy classes in $E^7(S^4)$ which do not change
$[f]$ after embedded connected sum:
$$
I(f)=I([f])  := \{ [g]\in E^7(S^4) \, : \, [f \#g]=[f]\}
$$
For some cases this orbit and group are found in terms of $u,l,b$ in Corollaries \ref{t:actioni} and \ref{c:gen}.

(b) The indeterminancy in the classification of Theorem \ref{t:gensm}.c corresponds to the fact that we do not always know  $\im\theta_{u,l,b}$.
Thus determining $\im\theta_{u, l, b}$ becomes a key problem.
This image is found in this paper when either $u=0$ or $2\rho_d\overline l=0$ (Corollary \ref{c:gen}) or in the cases (1,2,3) of Remark \ref{r:cordif}.

For general $u,l$ and simple enough $N$ there are some $u,l$ such that for each $b$ the methods of this paper do not completely determine $\im\theta_{u,l,b}$.
E.g.  let $N = (S^1 \times S^3) \# (S^2 \times S^2)$.
Then $x \capM{N}y\capM{N} u = 0$ for each $x, y \in H_3$ and $u \in H_2$.
Take the standard bases for $H_2 \cong H_2(S^2 \times S^2)$ and for $H_3 \cong H_3(S^1 \times S^3)$.
The pair $((0,6), l)$ is symmetric, where $l(x,y):=xy$.
So by Theorem \ref{t:gensm}.a there is an embedding $f\colon N\to S^7$ such that $(\varkappa\times\lambda)(f)=((0,6),l)$.
Then $d:=\di (0,6)=6$, $\widehat d=6$ and $\overline l:H_3\to H_1$ is `the identity'.
Hence $\rho_d\overline l$ is surjective.
Thus
$C_{u,l}=\frac{H_1\otimes\Z_d}{2H_1\otimes\Z_d}\cong \Z_2$,
$K_{u,l}=3H_3\cong\Z$ and the pairing $\capM{d}\colon C_{u,l}\times K_{u,l}\to\Z_d$ is
given by $\rho_2z\capM{d}3y=3\rho_d(z\capM{N} y)$ for each $z\in H_1$ and $y\in H_3$.
So this pairing is trivial mod3.
Then by Theorem \ref{t:gensm}.c $\theta_{u, l, b} = \theta_{u, l, b'}$ for each $b, b' \in C_{u,l}$.
Both the trivial and the non-trivial homomorphisms $K_{u,l}\to 4\Z_6\cong\Z_3$ fit into the conclusion of
Theorem \ref{t:gensm}.c, so Theorem \ref{t:gensm}.c does not allow us to completely determine $\im\theta_{u,l,b}$.
\end{Remark}

\begin{Problem}
(a) Characterize those closed connected orientable 4-manifold with torsion free $H_1$ such that for every embeddings $f:N\to S^7$ and $g\colon S^4\to S^7$ the embeddings $f\#g$ and $f$ are isotopic (in other words, the action $\#$ of $E^7(S^4)$ on $E^7(N)$ is trivial).

Cf. Corollary \ref{t:fa}.
If $H_1=0$, then this property is equivalent to $H_2^{DIFF}$ containing no elements divisible either by 4 or by  3.
E.g. $N=\C P^2$ satisfies this property (because $\sigma(\C P^2)=1$).

(b) Characterize those $f$ for which $|O(f)|=12$ (i.e.~$|I(f)|=1$),
and those $f$ for which $|O(f)|=1$ (i.e.~$|I(f)|=12$).
\end{Problem}



\begin{Addendum} \label{c:s1s3}
For every $l\in\Z-\{0\}$ there is a map
$\psi_l\colon \Z\times\Z_{2l}\to\Z_{12}$ such that for every
$a,a'\in\Z_{12}$ and $l,b,l',b'\in\Z$ we have  $a\#\tau(l,b)=a'\#\tau(l',b')$ if and only if
$$
\left[\begin{matrix}\text{either}\quad l=l'=0, \quad b=b'\quad\text{and}\quad a\equiv a'\mo 2\gcd(b,6),\\
\text{or}\quad l=l'\ne0, \quad b\equiv b'\mo2l \quad\text{and}\quad
\rho_{12}(a-a')=\psi_l([b/2l],\rho_{2l}b)-\psi_l([b'/2l],\rho_{2l}b). \end{matrix}\right.
$$
\end{Addendum}

\begin{proof}
By Theorem \ref{t:s1s3sm}.b if either $l\ne l'$ or $b\not\equiv b'\mo2l$, then the equivalence is
clear because neither of the two assertions holds.

Assume that $l=l'$ and $b\equiv b'\mo2l$.
Let $\tau:=\tau(l,b)$ and $\tau'=\tau(l,b')$.
By the Isotopy Classification Theorem \ref{l:Is} and Theorem \ref{t:s1s3sm}.b
$a\#\tau=a'\#\tau' ~\Leftrightarrow~ \eta(a\#\tau,a'\#\tau')=0$.
Since $\di\varkappa(\tau)=0$, we may use Corollary \ref{c:gen}.b.

If $l=0$, then $b=b'$.
By Theorem \ref{t:s1s3sm}.b and Corollary \ref{c:gen}.b $\im\theta_{0,0,b}$ consists of elements of $\Z_{24}$
which are divisible by $4\gcd(b,6)$.
Hence by Addendum \ref{l:addeta} and the transitivity of $\eta$ (Lemma \ref{l:difeta})
$$\eta(a\#\tau,a'\#\tau')=\rho_{4\gcd(b,6)}(2a-2a')\in\Z_{4\gcd(b,6)}.$$
If $l\ne0$, then by Theorem \ref{t:s1s3sm}.b and Corollary \ref{c:gen}.b $\im\theta_{0,l,b}=0$ and
$\eta(a\#\tau,a'\#\tau')\in 2\Z_{24}$.
Identify $\Z_{2l}$ and $\{0,1,\ldots,2l-1\}$.
For every $x\in\Z_{2l}$ and $k\in\Z$ define
$$\psi_l(k,x):=\frac12\eta(\tau(l,x),\tau(l,x+2kl))\in\Z_{12}.$$
Define
$\overline\tau:=\tau(l,\rho_{2l}b)$.
Then by Addendum \ref{l:addeta} and the transitivity of $\eta$ (Lemma \ref{l:difeta})
$$
\frac{\eta(a\#\tau,a'\#\tau')}2=
\rho_{12}(a-a') - \frac{\eta(\overline\tau,\tau)}2 + \frac{\eta(\overline\tau,\tau')}2=
\rho_{12}(a-a') - \psi_l([b/2l],\rho_{2l}b) + \psi_l([{b'}/{2l}],\rho_{2l}b).
$$
The two formulas for $\eta(a\#\tau,a'\#\tau')$ above imply the stated equivalence.
\end{proof}

\begin{Remark}[An approach to the action of knots]\label{r:action}
Let us explain the ideas required to move from the classification modulo knots in \cite{CS16} to the
main results of this paper.
We briefly recall and continue the discussion in \cite[1.4]{CS16}.

Suppose that $f_0, f_1 \colon N \to S^7$ are embeddings.
Assume that $f_1$ is isotopic to $f_0 \# g$ for some embedding $g \colon S^4 \to S^7$.
By \cite[Isotopy Classification Modulo Knots Theorem 2.8]{CS16} this is equivalent to $\lambda(f_0)=\lambda(f_1)$, $\varkappa(f_0)=\varkappa (f_1)$ and $\beta(f_0,f_1)=0$.
The complements $C_0$ and $C_1$ may be glued together along a bundle isomorphism
$\varphi\colon\de C_0\to\de C_1$ to form a parallelizable closed $7$-manifold $M = C_0 \cup_{\varphi} (-C_1)$.
Recall that $d: = \di\varkappa(f_0)$ is the divisibility of $\varkappa(f_0) \in H_2$.
By the assumption on $f_0,f_1$ there is a joint Seifert class $Y\in H_5(M)$ such that $\rho_dY^2=0$, i.e.~{\it a $d$-class} \cite[Lemma 4.1]{CS16}.
There is a spin null-bordism $(W, z)$ of $(M_\varphi, Y)$, since $\Omega_7^{Spin}(\C P^\infty) = 0$.
Since $\rho_dY^2 = 0$, the class $\rho_dz^2 \in H_4(W, \de;\Z_d)$ lifts to $\overline{z^2} \in H_4(W;\Z_d)$.
Recall that $p^*_W \in H_4(W, \de)$ is the Poincar\'{e} dual of $p_W$, the spin Pontrjagin class of $W$.
We then verified that {\it the Kreck invariant},
$$
\eta(\varphi,Y) := \overline{z^2}\capM{W}\rho_{\widehat d}(z^2-p^*_W) \in \Z_{\widehat d},
$$
determines the surgery obstruction for $W$ to be spin diffeomorphic to the product $C_0\times I$ \cite{CS16}.
We proved that $\eta(\varphi,Y)$ is independent of the choices of $W,z,\overline{z^2}$ for
a fixed bundle isomorphism $\varphi$ and $d$-class $Y$ \cite[\S4.1]{CS16}.
We also proved that $\eta(\varphi,Y)$ is independent of the choice of $\varphi$: for the precise statement, see
\cite[Lemma 4.3.c]{CS16}.
So we need to know the various values of $\eta(\varphi,Y)$ arising from the different possible choices of $Y$.
These choices are described in \cite[Description of $d$-classes Lemma 4.7]{CS16}.
The achievement of this paper is showing that the change of $\eta(\varphi,Y)$ under a change of $Y$ is precisely determined by $\theta_{u, l, b}$, and proving the properties of $\theta_{u, l, b}$ (Lemma \ref{l:diff}).
\end{Remark}

It is well known that there are pairs of $4$-manifolds $N_0$ and $N_1$ which are homeomorphic
but not diffeomorphic (see e.g.~\cite{GS99}).
It is natural to ask about the relationship between $E^7(N_0)$ and $E^7(N_1)$ in this case.

\begin{Conjecture} Let $N_0$ and $N_1$ be closed connected
$4$-manifolds.

(a)
A homeomorphism $h \colon N_0 \to N_1$ gives rise to a
geometrically defined bijection
\linebreak $E_h \colon E^7(N_0) \to E^7(N_1)$.

(b)
A smooth $h$-cobordism $(W; N_0, N_1)$ gives rise to a geometrically defined
bijection \linebreak
$E_{(W; N_0, N_1)} \colon E^7(N_0) \to E^7(N_1)$.
\end{Conjecture}

}


{\it In this list books, surveys and expository papers are marked by stars}


\begin{thebibliography}{99}



\newcommand{\agles}{\bibitem[AGL]{AGL86} Mathematical Economics,  ed. by A. Ambrosetti, F. Gori, R. Lucchetti,
Lect. Notes Math. 1330, Springer, 1986.}


\newcommand{\akzz}{\bibitem[Ak00]{Ak00} * \emph{П. М. Ахметьев.} Вложения компактов, стабильные
гомотопические группы сфер и теория особенностей, Успехи Мат. Наук.  2000. 55:3. C.~3-62.}

\newcommand{\akoe}{\bibitem[AK19]{AK19} \emph{S. Avvakumov, R. Karasev.} Envy-free division using mapping degree, arXiv:1907.11183.}

\newcommand{\aksoe}{\bibitem[AKS19]{AKS19} \emph{S. Avvakumov, R. Karasev and A. Skopenkov.} Stronger counterexamples to the topological Tverberg conjecture, submitted, arxiv:1908.08731.}

\newcommand{\amsw}{\bibitem[AMS+]{AMSW} \emph{S. Avvakumov, I. Mabillard, A. Skopenkov and U. Wagner.}
\newblock Eliminating Higher-Multiplicity Intersections, III. Codimension 2, Israel J. Math., to appear.  arxiv:1511.03501.}


\newcommand{\anzt}{\bibitem[An03]{An03} * \emph{Д. В. Аносов.} Отображения окружности, векторные поля и их применения. М: МЦНМО, 2003.}

\newcommand{\arnf}{\bibitem[Ar95]{Ar95} \emph{V. I. Arnold,}  Topological invariants of plane curves and caustics, University Lecture Series, Vol. 5, Amer. Math. Soc., Providence, RI, 1995.}

\newcommand{\arszo}{\bibitem[ARS01]{ARS01} \emph{P. Akhmetiev, D. Repov\v s and A. Skopenkov},
Embedding products of low-dimensional manifolds in $\R^m$, Topol. Appl. 113 (2001), 7--12.}

\newcommand{\arszt}{\bibitem[ARS02]{ARS02} \emph{P. Akhmetiev, D. Repovs and A. Skopenkov.} Obstructions to approximating maps of $n$-manifolds into $R^{2n}$ by embeddings, Topol. Appl., 123 (2002), 3--14.}

\newcommand{\asoed}{\bibitem[As]{As} \emph{A. Asanau,} \lowercase{A SIMPLE PROOF THAT CONNECTED SUM OF ORDERED
ORIENTED LINKS IS NOT WELL-DEFINED,} Math. Notes, to appear.}

\newcommand{\asoe}{\bibitem[As]{As} \emph{A. Asanau,} On the \lowercase{TRIPLE SELF-INTERSECTION NUMBER FOR GRAPHS IN THE PLANE,} unpublished, 2018.}


\newcommand{\bbsn}{\bibitem[BB79]{BB} \emph{E.~G. Bajm{{\'o}}czy and I.~B{{\'a}}r{{\'a}}ny,}
\newblock On a common generalization of {B}orsuk's and {R}adon's theorem,
\newblock Acta Math.\ Acad.\ Sci.\ Hungar.\ 34:3 (1979), 347-350.}

\newcommand{\bbzos}{\bibitem[BBZ]{BBZ} * \emph{I.~B{{\'a}}r{{\'a}}ny, P.~V.~M. Blagojevi{{\'c}} and G.~M. Ziegler.} Tverberg's Theorem at 50: Extensions and Counterexamples, Notices of the AMS, 63:7 (2016), 732--739.}


\newcommand{\bcm}{\bibitem[BCM]{BCM} * 13th Hilbert Problem on superpositions of functions, presented by A. Belov, A. Chilikov, I. Mitrofanov, S. Shaposhnikov and A. Skopenkov,
\url{http://www.turgor.ru/lktg/2016/5/index.htm}.}

\newcommand{\biet}{\bibitem[Bi83]{Bi83} * \emph{R. H. Bing.} The Geometric Topology of 3-Manifolds. Providence, R.~I. 1983. (Amer. Math. Soc. Colloq. Publ., 40).}

\newcommand{\bioe}{\bibitem[Bi19]{Bi19} \emph{А. Бикеев.}  Реализуемость дисков с ленточками на ленте Мебиуса.
\url{https://conf62.mipt.ru/conference/6923} }

\newcommand{\beet}{\bibitem[BE82]{BE82} * \emph{V.G. Boltyansky and V.A. Efremovich.} Intuitive Combinatorial Topology. Springer.}

\newcommand{\beetr}{\bibitem[BE82]{BE82} * \emph{В. Г. Болтянский и В. А. Ефремович.} Наглядная топология. М.:  Наука, 1982.}


\newcommand{\bfzof}{\bibitem[BFZ14]{BFZ14} \emph{P. V. M. Blagojevi{\'c}, F. Frick, and G. M. Ziegler,}
Tverberg plus constraints, Bull. Lond. Math. Soc. 46:5 (2014), 953-967, arXiv: 1401.0690.}


\newcommand{\bfzos}{\bibitem[BFZ]{BFZ} \emph{P. V. M. Blagojevi{\'c}, F. Frick and G. M. Ziegler,}
Barycenters of Polytope Skeleta and Counterexamples to the Topological Tverberg Conjecture, via Constraints,
J. Eur. Math. Soc., arXiv:1510.07984.}


\newcommand{\bmzof}{\bibitem[BMZ15]{BMZ15} \emph{P. V. M. Blagojevi{\'c}, B. Matschke, G. M. Ziegler,}
Optimal bounds for the colored Tverberg problem, J. Eur. Math. Soc.,  17:4 (2015) 739--754,
arXiv:0910.4987.}

\newcommand{\bgos}{\bibitem[BG16]{BG16} \emph{A. Bj\"orner and A. Goodarzi}, On Codimension one Embedding of Simplicial Complexes, in book: A Journey Through Discrete Mathematics, arXiv:1605.01240.}

\newcommand{\bmzzn}{\bibitem[BMZ11]{BMZ11} \emph{P. V. M. Blagojevi{\'c}, B. Matschke, G. M. Ziegler,}
Optimal bounds for a colorful Tverberg-Vre\'cica type problem, Advances in Math., 226 (2011), 5198-5215, arXiv:0911.2692.}

\newcommand{\bm}{\bibitem[BM15]{BM} \emph{I. Bogdanov and A. Matushkin.} Algebraic proofs of linear versions of the Conway--Gordon--Sachs theorem and the van Kampen--Flores theorem, arXiv:1508.03185.}

\newcommand{\bmzf}{\bibitem[BM04]{BM04} \emph{Boyer, J. M. and Myrvold, W. J.} On the cutting edge: simplified $O(n)$ planarity by edge addition,  Journal of Graph Algorithms and Applications, 8:3 (2004) 241--273.}


\newcommand{\bpns}{\bibitem[BP97]{BP97} * \emph{R. Benedetti and C. Petronio.} Branched standard spines of 3-manifolds, Lecture Notes in Math. 1653, Springer-Verlag, Berlin-Heidelberg-New York, 1997.}

\newcommand{\brst}{\bibitem[Br72]{Br72} \emph{J. L. Bryant.} Approximating embeddings of polyhedra in codimension 3, Trans. Amer. Math. Soc., 170 (1972) 85--95.}

\newcommand{\brts}{\bibitem[Br26]{Br26} \emph{P. Bruegel.} The Magpie on the Gallows, 1526, \url{https://en.wikipedia.org/wiki/The_Magpie_on_the_Gallows}.}

\newcommand{\bren}{\bibitem[Br82]{brown1982} * \emph{K.~S. Brown.} \newblock Cohomology of Groups. \newblock Springer-Verlag New York, 1982.}


\newcommand{\brsnn}{\bibitem[BRS99]{BRS99} \emph{D. Repov\v s, N. Brodsky and A. B. Skopenkov.}
A classification of 3-thickenings of 2-polyhedra, Topol. Appl. 1999. 94. P.~307-314.}

\newcommand{\bsseo}{\bibitem[BSS]{BSS} \emph{I.~B\'{a}r\'{a}ny, S.~B. Shlosman, and A.~Sz{\H{u}}cs,}
\newblock On a topological generalization of a theorem of {T}verberg,
\newblock J.\ London Math.\ Soc.\ (II. Ser.) 23 (1981), 158--164.}

\newcommand{\bssos}{\bibitem[BS]{BS} \emph{I.~B\'{a}r\'{a}ny, P. Sober\'{o}n,} Tverberg's theorem is 50 years old: a survey, arXiv:1712.06119.}


\newcommand{\btzs}{\bibitem[BT07]{BT07} \emph{A. Bj\"orner, M. Tancer}, Combinatorial Alexander Duality --- a Short and Elementary Proof, Discr. and Comp. Geom., 42 (2009) 586. arXiv:0710.1172.}

\newcommand{\buse}{\bibitem[Bu68]{Bu68} \emph{A. R. Butz,} Space filling curves and mathematical programming, Information and Control, 12:4 (1968) 314--330.}


\newcommand{\bz}{\bibitem[BZ16]{BZ} * \emph{P. V. M. Blagojevi\v c and G. M. Ziegler,} Beyond the Borsuk-Ulam theorem: The topological Tverberg story, in: A Journey Through Discrete Mathematics, Eds. M. Loebl,
J. Ne\v set\v ril, R. Thomas, 273--341. arXiv:1605.07321v2.}



\newcommand{\cfsz}{\bibitem[CF60]{CF60} \emph{P. E. Conner and E. E. Floyd}, Fixed points free involutions and equivariant maps, Bull. AMS, 66 (1960) 416--441.}

\newcommand{\cget}{\bibitem[CG83]{CG83} \emph{J. H. Conway and C. M. A. Gordon},
Knots and links in spatial graphs, J. Graph Theory  7 (1983), 445--453.}

\newcommand{\cten}{\bibitem[Ch]{Ch} \emph{Chuang Tzu,} translated by H. A. Giles, Bernard Quaritch, London, 1889.}

\newcommand{\chnn}{\bibitem[Ch99]{Ch99} * \emph{А. В. Чернавский,} Теорема Жордана.  Мат. Просвещение, 3 (1999), 142--157.}

\newcommand{\hcon}{\bibitem[HC19]{HC19} * \emph{C. Herbert Clemens.} Two-Dimensional Geometries. A Problem-Solving Approach, AMS, 2019.}

\newcommand{\ckmoo}{\bibitem[CKMS]{CKMS} \emph{M. \v Cadek, M. Kr\v c\'al. J. Matou\v sek, F. Sergeraert,
L. Vok\v r\'inek, U. Wagner.} Computing all maps into a sphere, J. of the ACM, 61:3 (2014). arXiv:1105.6257.}


\newcommand{\ckmvw}{\bibitem[CKMV]{CKMV} \emph{M. \v Cadek, M. Kr\v c\'al. J. Matou\v sek, L. Vok\v r\'inek, U. Wagner.} Extendability of continuous maps is undecidable, Discr. and Comp. Geom. 51 (2014) 24--66.
arXiv:1302.2370.}

\newcommand{\cksof}{\bibitem[CKS+]{CKS+} * New ways of weaving baskets, presented by G. Chelnokov, Yu. Kudryashov, A.Skopenkov and A. Sossinsky, \url{http://www.turgor.ru/lktg/2004/lines.en/index.htm}.}

\newcommand{\ckv}{\bibitem[CKV]{CKV} \emph{M.~{\v{C}}adek, M.~Kr\v{c}\'{a}l, and L.~Vok\v{r}\'{\i}nek.}
\newblock Algorithmic solvability of the lifting-extension problem, arXiv:1307.6444.}


\newcommand{\clr}{\bibitem[CLR]{CLR} * \emph{Т. Кормен, Ч. Лейзерсон, Р. Ривест.} Алгоритмы:
построение и анализ, МЦНМО, Москва, 1999.}

\newcommand{\clreng}{\bibitem[CLR]{CLR} * \emph{T. H. Cormen, C. E.Leiserson, R. L.Rivest, C. Stein.} Introduction to Algorithms, MIT Press, 2009.}

\newcommand{\crzfru}{\bibitem[CR]{CR} * \emph{Р. Курант, Дж. Роббинс,} Что такое математика. М.: МЦНМО, 2004.}

\newcommand{\crzfen}{\bibitem[CR]{CR} * \emph{R. Courant and H. Robbins,} What is Mathematics, Oxford Univ. Press.}

\newcommand{\crsne}{\bibitem[CRS98]{CRS98} * \emph{A. Cavicchioli, D. Repov\v s and A. B. Skopenkov.}
Open problems on graphs, arising from geometric topology, Topol. Appl. 1998. 84. P.~207-226.}

\newcommand{\crsot}{\bibitem[CRS11]{CRS11} \emph{M. Cencelj, D. Repov\v s and M. Skopenkov,}
Classification of knotted tori in the 2-metastable dimension, Mat. Sbornik, 203:11 (2012), 1654--1681.
arxiv:math/0811.2745.}

\newcommand{\csoo}{\bibitem[CS11]{CS11} \emph{D. Crowley and A. Skopenkov.} A classification of smooth embeddings of 4-manifolds in 7-space, II, Intern. J. Math., 22:6 (2011) 731-757, arxiv:math/0808.1795.}

\newcommand{\csos}{\bibitem[CS16]{CS16} \emph{D. Crowley and A. Skopenkov,} Embeddings of non-simply-connected 4-manifolds in 7-space. I. Classification modulo knots, Moscow Math. J., to appear. arXiv:1611.04738.}

\newcommand{\csoso}{\bibitem[CS16o]{CS16o} \emph{D. Crowley and A. Skopenkov,} Embeddings of non-simply-connected 4-manifolds in 7-space. II. On the smooth classification. arXiv:1612.04776.}

\newcommand{\crsk}{\bibitem[CS]{CS} \emph{D. Crowley and A. Skopenkov,} Embeddings of non-simply-connected 4-manifolds in 7-space. III. Piecewise-linear classification. draft.}

\newcommand{\cutz}{\bibitem[Cu20]{Cu20} \emph{C. Culter,} Cantor sets are not tangent homogeneous,
Topol. Appl. 271 (2020).}


\newcommand{\dies}{\bibitem[Di87]{Di} * \emph{T. tom Dieck,} Transformation groups, Studies in Mathematics, vol. 8, Walter de Gruyter, Berlin, 1987.}

\newcommand{\dent}{\bibitem[De93]{De93}  \emph{T.K. Dey.} On counting triangulations in $d$-dimensions. Comput. Geom.  3:6 (1993) 315--325.}

\newcommand{\denf}{\bibitem[DE94]{DE94}  \emph{T.K. Dey and H. Edelsbrunner.} Counting triangle crossings and halving planes, Discrete Comput. Geom. 12 (1994), 281--289.}


\newcommand{\ers}{\bibitem[ERS]{ERS} * Invariants of graph drawings in the plane, presented by A. Enne, A. Ryabichev, A. Skopenkov and T. Zaitsev, \url{http://www.turgor.ru/lktg/2017/6/index.htm}}


\newcommand{\fktnf}{\bibitem[FKT]{FKT} \emph{M. H. Freedman, V. S. Krushkal and P. Teichner.} Van Kampen's
embedding obstruction is incomplete for 2-complexes in~$\R^4$, Math. Res. Letters. 1994. 1. P.~167-176.}

\newcommand{\fltf}{\bibitem[Fl34]{Fl34} \emph{A. Flores}, \"Uber $n$-dimensionale Komplexe die im $E^{2n+1}$ absolut selbstverschlungen sind, Ergeb. Math. Koll. 6 (1934) 4--7.}

\newcommand{\fozf}{\bibitem[Fo04]{Fo04} * \emph{R. Fokkink.} A forgotten mathematician, Eur. Math. Soc. Newsletter 52 (2004) 9--14.}


\newcommand{\fres}{\bibitem[FR86]{FR86} \emph{R. Fenn, D. Rolfsen.}
Spheres may link homotopically in 4-space, J. London Math. Soc. 34 (1986) 177-184.}

\newcommand{\frof}{\bibitem[Fr15]{Fr15} \emph{F. Frick}, Counterexamples to the topological Tverberg conjecture,
Oberwolfach reports, arXiv:1502.00947.}


\newcommand{\fros}{\bibitem[Fr17]{Fr17} \emph{F. Frick}, O\lowercase{N AFFINE TVERBERG-TYPE RESULTS WITHOUT CONTINUOUS GENERALIZATION}, arXiv:1702.05466}


\newcommand{\ffen}{\bibitem[FF89]{FF89} * \emph{А. Т. Фоменко и Д. Б. Фукс.} Курс гомотопической топологии. М.: Наука, 1989.}

\newcommand{\ffene}{\bibitem[FF]{FF89} * \emph{A.T. Fomenko and D.B. Fuchs.} A course in homotopy theory (in Russian). Moscow: Nauka, 1989.
English translation of a preliminary edition: Budapest, AK, 1986.}

\newcommand{\fkos}{\bibitem[FK17]{FK17} \emph{R. Fulek, J. Kyn{\v{c}}l,} Hanani-Tutte for approximating maps of graphs, arXiv:1705.05243.}

\newcommand{\fwz}{\bibitem[FWZ]{FWZ} \emph{M. Filakovsky, U. Wagner, S. Zhechev.} Embeddability of simplicial complexes is undecidable. Oberwolfach reports, to appear.}


\newcommand{\gdikrse}{\bibitem[GDI]{GDI} * {\it A. Chernov, A. Daynyak, A. Glibichuk, M. Ilyinskiy, A. Kupavskiy, A. Raigorodskiy and A. Skopenkov,} Elements of Discrete Mathematics As a Sequence of Problems (in Russian),
MCCME, Moscow, 2016. Update: \url{http://www.mccme.ru/circles/oim/discrbook.pdf} .}

\newcommand{\gdikrs}{\bibitem[GDI]{GDI} * {\it А.А. Глибичук, А.Б. Дайняк, Д.Г. Ильинский, А.Б. Купавский, А.М. Райгородский, А.Б. Скопенков, А.А. Чернов,} Элементы дискретной математики в задачах, М, МЦНМО, 2016.
\url{http://www.mccme.ru/circles/oim/discrbook.pdf} .}

\newcommand{\gmpptw}{\bibitem[GMP+]{GMP+} \emph{X. Goaoc, I. Mabillard, P. Pat\'ak, Z. Pat\'akov\'a, M. Tancer, U. Wagner}, On Generalized Heawood Inequalities for Manifolds: a van Kampen--Flores-type Nonembeddability Result,
arXiv:1610.09063.}

\newcommand{\gres}{\bibitem[Gr86]{Gr86} * \emph{M.\ Gromov}, \newblock Partial Differential Relations,
Ergebnisse der Mathematik und ihrer Grenzgebiete (3), Springer Verlag, Berlin-New York, 1986.}

\newcommand{\groz}{\bibitem[Gr10]{Gr10} \emph{M. Gromov,}
\newblock Singularities, expanders and topology of maps. Part 2: From combinatorics to topology via algebraic isoperimetry, \newblock Geometric and Functional Analysis 20 (2010), no.~2, 416--526.}

\newcommand{\grsn}{\bibitem[GR79]{GR79} \emph{J. L. Gross	and R. H. Rosen}, A linear time planarity algorithm for 2-complexes, Journal of the ACM, 26:4 (1979), 611--617.}

\newcommand{\gs}{\bibitem[GS]{GS} \emph{М. Гортинский и О. Скрябин.} Критерий вложимости графов в плоскость вдоль прямой, препринт.}

\newcommand{\gssn}{\bibitem[GS79]{GS} \emph{P.~M. Gruber and R.~Schneider.} \newblock Problems in geometric convexity. \newblock In {\em Contributions to geometry ({P}roc. {G}eom. {S}ympos.,
  {S}iegen, 1978)}, 255--278. Birkh{\"a}user, Basel-Boston, Mass., 1979.}

\newcommand{\gssoe}{\bibitem[GSS+]{GSS+} * Projections of skew lines, presented by A. Gaifullin, A. Shapovalov, A. Skopenkov and M. Skopenkov, \url{http://www.turgor.ru/lktg/2001/index.php}.}

\newcommand{\gtes}{\bibitem[GT87]{GT87} * \emph{J. L. Gross and T. W. Tucker.}
Topological graph theory. New York: Wiley-Interscience, 1987.}

\newcommand{\guzn}{\bibitem[Gu09]{Gu09} \emph{A. Gundert.} On the complexity of embeddable simplicial complexes. Diplomarbeit, Freie Universit\"at Berlin, 2009. 	arXiv:1812.08447.}


\newcommand{\ha}{\bibitem[Ha]{Ha} * \emph{F. Harary.} Graph theory.
Рус. пер.: Ф. Харари. Теория графов. М., Мир, 1973.}

\newcommand{\hast}{\bibitem[Ha63]{Ha63} \emph{A.~Haefliger,} Plongements differentiables dans le domain stable, Comment. Math. Helv. 36 (1962-63), 155--176.}

\newcommand{\hasf}{\bibitem[Ha74]{Ha74} * \emph{P. Halmos,} How to talk mathematics. Notices of the AMS, 21 (1974) 155--158.}

\newcommand{\haef}{\bibitem[Ha84]{Ha84} \emph{N. Habegger,} Obstruction to embedding disks II: a proof of a conjecture by Hudson, Topol. Appl. 17 (1984).}

\newcommand{\hkne}{\bibitem[HK98]{HK} \emph{N. Habegger and U. Kaiser,} Link homotopy in 2--metastable range, Topology 37:1 (1998) 75--94.}

\newcommand{\hmsnt}{\bibitem[HMS]{HMS93} * \emph{C. Hog-Angeloni, W. Metzler and A. J. Sieradski.}
Two-dimensional homotopy and combinatorial group theory. Cambridge: Cambridge Univ. Press, 1993. (London Math. Soc. Lecture Notes, 197).}

\newcommand{\hjsf}{\bibitem[HJ64]{HJ64} \emph{R. Halin and H. A. Jung.}
Karakterisierung der Komplexe der Ebene und der 2-Sph\"are, Arch. Math. 1964. 15. P.~466-469.}

\newcommand{\ho}{\bibitem[Ho]{Ho} * The Hopf fibration, \url{https://www.youtube.com/watch?v=AKotMPGFJYk}}

\newcommand{\hozs}{\bibitem[Ho06]{Ho06} \emph{H. van der Holst,} Graphs and obstructions in four dimensions, J. Combin. Theory Ser. B 96:3 (2006), 388--404.}


\newcommand{\htsf}{\bibitem[HT74]{HT74} \emph{J. Hopcroft and R. E. Tarjan,} Efficient planarity testing, J. of the Association for Computing Machinery, 21:4 (1974) 549--568.}

\newcommand{\hufn}{\bibitem[Hu59]{hu59} * \emph{S. T. Hu,} Homotopy Theory, Academic Press, New York, 1959.}


\newcommand{\io}{\bibitem[Io]{Io} \url{https://en.wikipedia.org/wiki/Category:Impossible_objects}}


\newcommand{\kho}{\bibitem[Kho]{Kho} \emph{N. Khoroshavkina.} A simple characterization of graphs of cutwidth 2, arXiv:1811.06716.}

\newcommand{\kozeru}{\bibitem[Ko18]{Ko18} * \emph{Е. Колпаков.}
Доказательство теоремы Радона при помощи понижения размерности, Мат. Просвещение, 23 (2018), arXiv:1903.11055.}

\newcommand{\koze}{\bibitem[Ko18]{Ko18} * \emph{E. Kolpakov.}
A proof of Radon Theorem via lowering of dimension, Mat. Prosveschenie, 23 (2018), arXiv:1903.11055.}

\newcommand{\koon}{\bibitem[Ko19]{Ko19} \emph{E. Kogan.} Linking of three triangles in 3-space, arXiv:1908.03865.}

\newcommand{\ko}{\bibitem[Ko]{Ko} \emph{E. Kolpakov.} A `converse' to the Constraint Lemma, arXiv:1903.08910.}

\newcommand{\kmsth}{\bibitem[KM63]{KM63} \emph{M. A. Kervaire and J. W. Milnor,} Groups of homotopy spheres. I,  Ann. of Math. (2) 77 (1963), 504-537.}

\newcommand{\kps}{\bibitem[KPS]{KPS} * \emph{A. Kaibkhanov, D. Permyakov and A. Skopenkov.}
Realization of graphs with rotation, \url{http://www.turgor.ru/lktg/2005/3/index.htm}.}

\newcommand{\ksnn}{\bibitem[KS99]{KS99} * \emph{П. Кожевников и А. Скопенков.} Узкие деревья на плоскости, Мат. Образование. 1999. 2-3. С.~126-131.}

\newcommand{\kuse}{\bibitem[Ku68]{Ku68} * \emph{К. Куратовский.} Топология. Т.~1,~2. М.: Мир, 1969.}


\newcommand{\lazz}{\bibitem[La00]{La00} \emph{F. Lasheras.} An obstruction to 3-dimensional thickening,
Proc. Amer. Math. Soc. 2000. 128. P.~893-902.}

\newcommand{\lssn}{\bibitem[LS69]{LS69} \emph{W. B. R. Lickorish and L. C. Siebenmann.}
Regular neighborhoods and the stable range,  Trans. AMS. 1969. 139. P.~207-230.}

\newcommand{\lsne}{\bibitem[LS98]{LS98} \emph{L. Lovasz and A. Schrijver,}
A Borsuk theorem for antipodal links and a spectral characterization of linklessly embeddable graphs, Proc. of AMS 126:5 (1998), 1275-1285.}

\newcommand{\ltof}{\bibitem[LT14]{LT14} \emph{E. Lindenstrauss and M. Tsukamoto,} Mean dimension and an embedding problem: an example, Israel J. Math. 199 (2014).}


\newcommand{\lyzf}{\bibitem[LY04]{LY04} * \emph{Y. Lin and A. Yang,} On 3-cutwidth critical graphs, Discrete Mathematics, 275 (2004), 339--346.}

\newcommand{\lz}{\bibitem[LZ]{LZ} * \emph{S. Lando and A. Zvonkin.} Embedded Graphs. Springer.}



\newcommand{\mast}{\bibitem[Ma73]{Ma73} \emph{С. В. Матвеев.} Специальные остовы кусочно-линейных многообразий, Мат. Сборник. 1973. 92. С.~282-293.}

\newcommand{\maste}{\bibitem[Ma73]{Ma73} \emph{S. V. Matveev.} Special skeletons of PL manifolds (in Russian), Mat. Sbornik. 1973. 92. P.~282-293.}

\newcommand{\mazt}{\bibitem[Ma03]{Ma03} * \emph{J.~Matou{\v{s}}ek.} Using the {B}orsuk-{U}lam theorem:
Lectures on topological methods in combinatorics and geometry. Springer Verlag, 2008.}


\newcommand{\mazf}{\bibitem[Ma05]{Ma05} \emph{V. Manturov.} A proof of the Vasiliev conjecture on the planarity of singular links, Izv. RAN 2005.}

\newcommand{\metn}{\bibitem[Me29]{Me29} \emph{K. Menger.} \"Uber pl\"attbare Dreiergraphen und Potenzen nicht pl\"attbarer Graphen, Ergebnisse Math. Kolloq., 2 (1929) 30--31.}

\newcommand{\mezf}{\bibitem[Me04]{Me04} \emph{S. Melikhov.} Sphere eversions and realization of mappings, Trudy MIAN 247 (2004) 159-181 (in Russian) arXiv:math.GT/0305158.}

\newcommand{\mezs}{\bibitem[Me06]{Me06} \emph{S. A. Melikhov}, The van Kampen obstruction and its relatives, 	
Proc. Steklov Inst. Math 266 (2009), 142-176 (= Trudy MIAN 266 (2009), 149-183), arXiv:math/0612082.}

\newcommand{\meoo}{\bibitem[Me11]{Me11} \emph{S. A. Melikhov}, Combinatorics of embeddings, arXiv:1103.5457.}

\newcommand{\meos}{\bibitem[Me17]{Me} \emph{S. Melikhov,} Gauss type formulas for link map invariants, arXiv:1711.03530.}

\newcommand{\miso}{\bibitem[Mi61]{Mi61} \emph{J. Milnor,} A procedure for killing homotopy groups of differentiable manifolds, Proc. Sympos. Pure Math, Vol. III (1961), 39--55.}

\newcommand{\mins}{\bibitem[Mi97]{Mi97} \emph{P. Minc.} Embedding simplicial arcs into the plane, Topol. Proc. 1997. 22. 305--340.}


\newcommand{\moss}{\bibitem[Mo77]{Mo77} * \emph{E. E. Moise.} Geometric Topology in Dimensions 2 and 3 (GTM), Springer-Verlag, 1977.}

\newcommand{\mrst}{\bibitem[MRS+]{MRS+} \emph{A. de Mesmay, Y. Rieck, E. Sedgwick, M. Tancer,}
Embeddability in $\R^3$ is NP-hard. arXiv:1708.07734.}

\newcommand{\mstwof}{\bibitem[MST+]{MST+} \emph{J. Matou\v sek, E. Sedgwick, M. Tancer, U. Wagner}, Embeddability in the 3-sphere is decidable, Journal of the ACM 65:1 (2018) 1--49, arXiv:1402.0815.}


\newcommand{\mtzo}{\bibitem[MT01]{MT01} * \emph{B. Mohar and C. Thomassen.} Graphs on Surfaces.
The John Hopkins University Press, 2001.}

\newcommand{\mtwoz}{\bibitem[MTW12]{MTW12} \emph{J. Matou\v sek, M. Tancer, U. Wagner.} A geometric proof of
the colored Tverberg theorem, Discr. and Comp. Geometry, 47:2 (2012), 245--265. arXiv:1008.5275.}


\newcommand{\mtwoo}{\bibitem[MTW11]{MTW11} \emph{J. Matou\v sek, M. Tancer, U. Wagner.}
Hardness of embedding simplicial complexes in $\R^d$, J. Eur. Math. Soc. 13:2 (2011), 259--295. arXiv:0807.0336.}



\newcommand{\mwof}{\bibitem[MW15]{MW15} \emph{I. Mabillard and U. Wagner.}
Eliminating Higher-Multiplicity Intersections, I. A Whitney Trick for Tverberg-Type Problems. arXiv:1508.02349.}


\newcommand{\mwos}{\bibitem[MW16]{MW16} \emph{I. Mabillard and U. Wagner.} Eliminating Higher-Multiplicity Intersections, II. The Deleted Product Criterion in the $r$-Metastable Range, Arxiv: 1601.00876.}

\newcommand{\mwosd}{\bibitem[MW16']{MW16'} \emph{I. Mabillard and U. Wagner.} Eliminating Higher-Multiplicity Intersections, II. The Deleted Product Criterion in the r-Metastable Range,
Proceedings of the 32nd Annual Symposium on Computational Geometry (SoCG'16).}


\newcommand{\neno}{\bibitem[Ne91]{Ne91} \emph{S. Negami,} Ramsey theorems for knots, links and spatial graphs,
Trans. Amer. Math. Soc., 324 (1991), 527--541.}

\newcommand{\noss}{\bibitem[No76]{No76} * \emph{С. П. Новиков.} Топология-1. М.: Наука, 1976. (Итоги науки и техники. ВИНИТИ. Современные проблемы математики. Основные направления, 12).}


\newcommand{\omoe}{\bibitem[Om18]{Om18} * \emph{А. Омельченко,} Теория графов. М.: МЦНМО, 2018.}

\newcommand{\ossf}{\bibitem[OS74]{OS74} \emph{R. P. Osborne and R. S. Stevens.} Group presentations
corresponding to spines of 3-manifolds, I, Amer. J.~Math. 1974. 96. P.~454-471; II, Amer. J.~Math. 1977. 234.
P.~213-243; III, Amer. J.~Math. 1977. 234 P.~245-251.}


\newcommand{\oz}{\bibitem[Oz]{Oz} \emph{M. \"Ozaydin,} Equivariant maps for the symmetric group, unpublished,
\url{http://minds.wisconsin.edu/handle/1793/63829}.}


\newcommand{\paof}{\bibitem[Pa15]{Pa15} \emph{S. Parsa,} On links of vertices in simplicial $d$-complexes embeddable in the euclidean $2d$-space, Discrete Comput. Geom. 59:3 (2018), 663--679.
This is arXiv:1512.05164v4 up to numbering of sections, theorems etc; we refer to numbering in arxiv version.}

\newcommand{\paoe}{\bibitem[Pa18]{Pa18} \emph{S. Parsa,} On links of vertices in simplicial $d$-complexes
embeddable in the euclidean $2d$-space, arXiv:1512.05164v6.}

\newcommand{\patza}{\bibitem[Pa20a]{Pa20a} \emph{S. Parsa,} On the Smith classes, the van Kampen obstruction and embeddability of $[3]*K$, arXiv:2001.06478.}

\newcommand{\patzb}{\bibitem[Pa20b]{Pa20b} \emph{S. Parsa,} On the embeddability of $[3]*K$, arXiv:2001.06506.}

\newcommand{\pak}{\bibitem[Pa]{Pa} * \emph{I. Pak}, Lectures on Discrete and Polyhedral Geometry, \url{http://www.math.ucla.edu/~pak/geompol8.pdf}.}

\newcommand{\peze}{\bibitem[Pe08]{Pe08} \emph{Д. Пермяков.} Классификация погружений графов в плоскость,
Вестник МГУ, сер.1, 2008, N5, 55-56.}

\newcommand{\peos}{\bibitem[Pe16]{Pe16} \emph{Д. Пермяков.} Матем. сб., 207:6 (2016),  93--112.}

\newcommand{\pest}{\bibitem[Pe72]{Pe72} * \emph{B. B. Peterson.} The Geometry of Radon's Theorem, Amer. Math. Monthly 79 (1972), 949-963.}


\newcommand{\prnf}{\bibitem[Pr95]{Pr95} * \emph{V. V. Prasolov.} Intuitive topology. Amer. Math. Soc., Providence, R.I., 1995.}



\newcommand{\przs}{\bibitem[Pr06]{Pr06} * \emph{V. V. Prasolov.}
Elements of Combinatorial and Differential Topology, 2006, GSM 74, AMS, Providence, RI.}

\newcommand{\przsru}{\bibitem[Pr04]{Pr04} * \emph{В. В. Прасолов.}
Элементы комбинаторной и дифференциальной топологии. М.: МЦНМО, 2004. \url{http://www.mccme.ru/prasolov}.}

\newcommand{\przse}{\bibitem[Pr07]{Pr07} * \emph{V. V. Prasolov.} Elements of homology theory. 2007, GSM 74, AMS, Providence, RI.}


\newcommand{\przseru}{\bibitem[Pr06]{Pr06} * \emph{В. В. Прасолов.} Элементы теории гомологий. М.: МЦНМО, 2006.}


\newcommand{\psns}{\bibitem[PS96]{PS96} * \emph{V. V. Prasolov, A. B. Sossinsky } Knots, Links, Braids, and 3-manifolds. Amer. Math. Soc. Publ., Providence, R.I., 1996.}


\newcommand{\pszf}{\bibitem[PS05]{PS05} * \emph{В. В. Прасолов и М. Б. Скопенков.}
Рамсеевская теория зацеплений, Мат. Просвещение. 2005. 9. С.~108--115.}

\newcommand{\pszfen}{\bibitem[PS05]{PS05} * \emph{V. V. Prasolov and M.B. Skopenkov.}
Ramsey link theory, Mat, Prosvescheniye, 9 (2005), 108--115.}

\newcommand{\psoo}{\bibitem[PS11]{PS11} \emph{Y. Ponty and C. Saule.} A combinatorial framework for designing (pseudoknotted) RNA algorithms, Proc. of the 11th Intern. Workshop on Algorithms in Bioinformatics, WABI'11, 250--269.}


\newcommand{\pw}{\bibitem[PW]{PW} \emph{I. Pak, S. Wilson}, G\lowercase{EOMETRIC REALIZATIONS OF POLYHEDRAL COMPLEXES}, \url{http://www.math.ucla.edu/~pak/papers/Fary-full31.pdf}.}


\newcommand{\razf}{\bibitem[RA05]{RA05} * \emph{J. L. Ram\'irez Alfons\'in.} Knots and links in spatial graphs: a survey. Discrete Math., 302 (2005), 225--242.}

\newcommand{\rnoo}{\bibitem[RN11]{RN11} * \emph{R. L. Ricca, B. Nipoti.} Gauss' linking number revisited.
J. of Knot Theory and Its Ramif. 20:10 (2011) 1325--1343. \url{https://www.maths.ed.ac.uk/~v1ranick/papers/ricca.pdf} .}

\newcommand{\rssec}{\bibitem[RS68c]{RS68c} \emph{C. P. Rourke and B. J. Sanderson,} Block bundles II, Ann. of Math. (2), 87 (1968) 431--483.}

\newcommand{\rsst}{\bibitem[RS72]{RS72} * \emph{C. P. Rourke and B. J. Sanderson,}
\newblock Introduction to Piecewise-Linear Topology,
\newblock \emph{Ergebn.\ der Math.} 69, Springer-Verlag, Berlin, 1972.}

\newcommand{\rsstr}{\bibitem[RS72]{RS72} * \emph{К. П. Рурк и Б. Дж. Сандерсон.} Введение в кусочно-линейную топологию, Москва. Мир. 1974.}

\newcommand{\rsns}{\bibitem[RS96]{RS96} * \emph{D. Repov\v s and A. B. Skopenkov.}
Embeddability and isotopy of polyhedra in Euclidean spaces,
Proc. of the Steklov Inst. Math. 1996. 212. P.~173-188.}

\newcommand{\rsne}{\bibitem[RS98]{RS98} \emph{D. Repov\v s and A. B. Skopenkov.}
A deleted product criterion for approximability of a map by embeddings, Topol. Appl. 1998. 87 P.~1-19.}

\newcommand{\rsnn}{\bibitem[RS99]{RS99} * \emph{Д. Реповш и А. Скопенков.}
Новые результаты о вложениях полиэдров и многообразий в евклидовы пространства, УМН. 1999. 54:6. С.~61-109.}

\newcommand{\rsnnd}{\bibitem[RS99']{RS99'} * \emph{Д. Реповш и А. Скопенков.}
Кольца Борромео и препятствия к вложимости, Труды МИРАН. 1999. 225. С.~331-338.}

\newcommand{\rszz}{\bibitem[RS00]{RS00} \emph{D. Repov\v s and A. Skopenkov.} Cell-like resolutions of polyhedra by special ones,  Colloq. Math. 2000. 86:2. P. 231--237.}

\newcommand{\rszzd}{\bibitem[RS00']{RS00'} * \emph{Д. Реповш и А. Скопенков.} Характеристические классы для начинающих, Мат. Просвещение. 2000. 4. С.~151-176.}

\newcommand{\rszt}{\bibitem[RS02]{RS02} * \emph{Д. Реповш и А. Скопенков.} Теория препятствий для начинающих,
Мат. Просвещение. 2002. 6. C.~60-77.}

\newcommand{\rszo}{\bibitem[RS01]{RS01} \emph{D. Repovs and A. Skopenkov.} On contractible $n$-dimensional compacta, non-embeddable into $\R^{2n}$, Proc. Amer. Math. Soc. 129 (2001) 627--628.}


\newcommand{\rssnf}{\bibitem[RSS]{RSS95} \emph{D. Repov\v s, A. B. Skopenkov  and E. V. \v S\v cepin.}
On uncountable collections of continua and their span, Colloq. Math. 1995. 69:2. P.~289-296.}

\newcommand{\rssnfd}{\bibitem[RSS']{RSS95'} \emph{D. Repov\v s, A. B. Skopenkov and E. V \v S\v cepin.}
On embeddability of $X\times I$ into Euclidean space, Houston J.~Math. 1995. 21. P.~199-204.}

\newcommand{\rssz}{\bibitem[RSS+]{RSSZ} * \emph{A. Rukhovich, A. Skopenkov, M. Skopenkov, A. Zimin},
Realizability of hypergraphs, \url{http://www.turgor.ru/lktg/2013/1/index.htm}.}

\newcommand{\rstnt}{\bibitem[RST']{RST93} \emph{N. Robertson, P. Seymour, and R. Thomas}, Linkless embeddings of graphs in 3-space, Bull. of the AMS, 21 (1993) 84--89.}

\newcommand{\rstno}{\bibitem[RST]{RST91} \emph{N. Robertson, P. Seymour, and R. Thomas}, A survey of
linkless embeddings, Graph Structure Theory (Seattle, WA, 1991), Contemp. Math. 147, (1993) 125--136.}


\newcommand{\saeo}{\bibitem[Sa81]{Sa81} \emph{H. Sachs.} On spatial representation of finite graphs,
in: Finite and infinite sets, Colloq. Math. Soc. Janos Bolyai, North Holland, Amsterdam (37) 1981.}

\newcommand{\sano}{\bibitem[Sa91]{Sa91} \emph{K. S. Sarkaria.}
A one-dimensional Whitney trick and Kuratowski's graph planarity criterion, Israel J.~Math. 1991. 73. P.~79-89.}

\newcommand{\sanov}{\bibitem[Sa91g]{Sa91g} \emph{K. S. Sarkaria.} A generalized Van Kampen-Flores theorem, Proc. Am. Math. Soc. 111 (1991), 559--565.}

\newcommand{\sant}{\bibitem[Sa92]{Sa92} \emph{K. S. Sarkaria.} Tverberg’s theorem via number fields. Israel J. Math., 79:317–320, 1992.}

\newcommand{\sazz}{\bibitem[Sa00]{Sa00} \emph{K. S. Sarkaria.} Tverberg partitions and Borsuk-Ulam theorems. Pacific J. Math., 196:1 (2000) 231--241.}

\newcommand{\sczf}{\bibitem[Sc04]{Sc04} \emph{T. Sch\"oneborn,} On the Topological Tverberg Theorem, arXiv:math/0405393.}


\newcommand{\scef}{\bibitem[Sc84]{Sc84} \emph{E.~V.~\v S\v cepin.} Soft mappings of manifolds, Russian Math. Surveys, 39:5 (1984).}

\newcommand{\shfs}{\bibitem[Sh57]{Sh57} \emph{A. Shapiro,} Obstructions to the embedding of a complex in a Euclidean space, I, The first obstruction, Ann. Math. 66 (1957), 256--269.}


\newcommand{\shen}{\bibitem[Sh89]{Sh89} * \emph{Ю. А. Шашкин,} Неподвижные точки, М., Наука, 1989.}

\newcommand{\shoe}{\bibitem[Sh18]{Sh18} * \emph{S. Shlosman},  Topological Tverberg Theorem: the proofs and the counterexamples, Russian Math. Surveys, 73:2 (2018), 175–182. arXiv:1804.03120}

\newcommand{\sisn}{\bibitem[Si69]{Si69} \emph{K. Sieklucki.} Realization of mappings, Fund. Math. 1969. 65. P.~325-343.}

\newcommand{\sios}{\bibitem[Si16]{Si16} \emph{S. Simon,} Average-Value Tverberg Partitions via Finite Fourier Analysis, Israel J. Math., 216 (2016), 891-904, arXiv:1501.04612.}


\newcommand{\sknf}{\bibitem[Sk94]{Sk94} \emph{А. Скопенков.} Геометрическое доказательство теоремы
Нойвирта об утолщаемости 2-мерных полиэдров, Math. Notes. 1995. 58:5. P.~1244-1247.}


\newcommand{\skne}{\bibitem[Sk98]{Sk98} \emph{A. B. Skopenkov.} On the deleted product criterion for embeddability in $\R^m$, Proc. Amer. Math. Soc. 1998. 126:8. P.~2467-2476.}

\newcommand{\skzz}{\bibitem[Sk00]{Sk00} \emph{A. Skopenkov,} On the generalized Massey--Rolfsen invariant for link maps, Fund. Math. 165 (2000), 1--15.}

\newcommand{\skzt}{\bibitem[Sk02]{Sk02} \emph{A. Skopenkov.} On the Haefliger-Hirsch-Wu invariants for embeddings and immersions, Comment. Math. Helv. 77 (2002), 78--124.}

\newcommand{\skzth}{\bibitem[Sk03]{Sk03} \emph{M. Skopenkov.} Embedding products of graphs into Euclidean spaces,
Fund. Math. 179 (2003),~191--198, arXiv:0808.1199.}

\newcommand{\skzthd}{\bibitem[Sk03']{Sk03'} \emph{M. Skopenkov.} On approximability by embeddings of cycles in the plane, Topol. Appl. 134 (2003),~1--22, arXiv:0808.1187.}

\newcommand{\skzf}{\bibitem[Sk05]{Sk05} * \emph{A. Skopenkov.}
On the Kuratowski graph planarity criterion, Mat. Prosveschenie, 9 (2005), 116-128. arXiv:0802.3820.}


\newcommand{\skzs}{\bibitem[Sk07]{Sk07} \emph{A. Skopenkov.} A new invariant and parametric connected sum of embeddings, Fund. Math. 197 (2007) 253--269, arxiv:math/0509621.}

\newcommand{\skze}{\bibitem[Sk08]{Sk08} * \emph{A. Skopenkov.} Embedding and knotting of manifolds in Euclidean spaces, London Math. Soc. Lect. Notes, 347 (2008) 248--342; arXiv:math/0604045.}

\newcommand{\skozp}{\bibitem[Sk08p]{Sk08p} \emph{A.  Skopenkov}, Embeddings of $k$-connected $n$-manifolds into
$\R^{2n-k-1}$, arxiv:math/0812.0263, earlier version published in Proc. AMS, 138 (2010) 3377--3389.}

\newcommand{\skoz}{\bibitem[Sk10]{Sk10} * \emph{А. Скопенков.} Вложения в плоскость графов с вершинами степени 4,
Мат. Просвещение, 21 (2017), arXiv:1008.4940.}

\newcommand{\skoo}{\bibitem[Sk11]{Sk11} \emph{M. Skopenkov.} When is the set of embeddings finite? Intern. J. Math. 26:7 (2015), arxiv:1106.1878.}

\newcommand{\sks}{\bibitem[Sk14]{Sk14} * \emph{A. Skopenkov}. Realizability of hypergraphs and Ramsey link theory, arxiv:1402.0658.}


\newcommand{\skof}{\bibitem[Sk15]{Sk15} * \emph{А. Скопенков.} Алгебраическая топология с геометрической точки зрения, Москва, МЦНМО, 2015 (1е издание).}

\newcommand{\skofe}{\bibitem[Sk15]{Sk15} * \emph{A. Skopenkov.} Algebraic Topology From Geometric Viewpoint (in Russian), MCCME, Moscow, 2015 (1st edition). }

\newcommand{\skofel}{\bibitem[Sk15e]{Sk15e} * \emph{А. Скопенков.} Алгебраическая топология
с геометрической точки зрения, эл. версия, \url{http://www.mccme.ru/circles/oim/home/combtop13.htm\#photo}}


\newcommand{\skotzr}{\bibitem[Sk20]{Sk20} * \emph{А. Скопенков.} Алгебраическая топология с геометрической точки зрения, Москва, МЦНМО, 2020 (2е издание). Эл. версия: \url{http://www.mccme.ru/circles/oim/home/combtop13.htm\#photo}}

\newcommand{\skotz}{\bibitem[Sk20]{Sk20} * \emph{A. Skopenkov.} Algebraic Topology From Geometric Viewpoint (in Russian), MCCME, Moscow, 2020 (2nd edition). Electronic version: \url{http://www.mccme.ru/circles/oim/home/combtop13.htm\#photo}}

\newcommand{\skofp}{\bibitem[Sk15]{Sk15} \emph{A. Skopenkov.} Classification of knotted tori,
Proc. A of the Royal Society of Edinburgh, to appear. arXiv: 1502.04470.}


\newcommand{\skos}{\bibitem[Sk16]{Sk16} * \emph{A. Skopenkov,} A user's guide to the topological Tverberg Conjecture, Russian Math. Surveys, 73:2 (2018), 323--353. Earlier version: arXiv:1605.05141v4.
\S4 available as {\it A. Skopenkov}, On van Kampen-Flores, Conway-Gordon-Sachs and Radon theorems, arXiv:1704.00300.}

\newcommand{\skosd}{\bibitem[Sk16']{Sk16'} * \emph{A. Skopenkov,} Stability of intersections of graphs in the plane and the van Kampen obstruction, Topol. Appl. 240(2018) 259--269, arXiv:1609.03727.}


\newcommand{\skosc}{\bibitem[Sk16c]{Sk16c} * \emph{A. Skopenkov,}  Embeddings in Euclidean space: an introduction to their classification, to appear in Boll. Man. Atl. http://www.map.mpim-bonn.mpg.de/Embeddings\_in\_Euclidean\_space:\_an\_introduction\_to\_their\_classification}

\newcommand{\skosie}{\bibitem[Sk16e]{Sk16e} * \emph{A. Skopenkov,} Embeddings just below the stable range: classification, to appear in Boll. Man. Atl.
http://www.map.mpim-bonn.mpg.de/Embeddings\_just\_below\_the\_stable\_range:\_classification}

\newcommand{\skost}{\bibitem[Sk16t]{Sk16t} * \emph{A. Skopenkov,} 3-manifolds in 6-space, to appear in Boll. Man. Atl.
http://www.map.mpim-bonn.mpg.de/3-manifolds\_in\_6-space}

\newcommand{\skosf}{\bibitem[Sk16f]{Sk16f} * \emph{A. Skopenkov,} 4-manifolds in 7-space, to appear in Boll. Man. Atl. http://www.map.mpim-bonn.mpg.de/4-manifolds\_in\_7-space}

\newcommand{\skosh}{\bibitem[Sk16h]{Sk16h} * \emph{A. Skopenkov,} High codimension links, to appear in Boll. Man. Atl. http://www.map.mpim-bonn.mpg.de/High\_codimension\_links}

\newcommand{\skosi}{\bibitem[Sk16i]{Sk16i} * \emph{A. Skopenkov,} Isotopy, submitted to Boll. Man. Atl.
\url{http://www.map.mpim-bonn.mpg.de/Isotopy}.}

\newcommand{\skose}{\bibitem[Sk17]{Sk17} \emph{A. Skopenkov,}
Eliminating higher-multiplicity intersections in the metastable dimension range,  arxiv:1704.00143.}

\newcommand{\skosed}{\bibitem[Sk17v]{Sk17v} * \emph{A. Skopenkov,}
On van Kampen-Flores, Conway-Gordon-Sachs and Radon theorems,  arxiv:1704.00300.}

\newcommand{\sk}{\bibitem[Sk17o]{Sk17o} \emph{A. Skopenkov,} On the metastable Mabillard-Wagner conjecture, arxiv:1702.04259.}

\newcommand{\skmos}{\bibitem[Sk17d]{Sk17d} \emph{M. Skopenkov}. Discrete field theory: symmetries and conservation laws, arxiv:1709.04788.}

\newcommand{\skoe}{\bibitem[Sk18]{Sk18} * \emph{A. Skopenkov.} Invariants of graph drawings in the plane.
Arnold Math. J., 2020; full version: arXiv:1805.10237.}


\newcommand{\skoeo}{\bibitem[Sk18o]{Sk18o} * \emph{A. Skopenkov.} A short exposition of S. Parsa's theorems on intrinsic linking and non-realizability. Discr. Comp. Geom.; full version:  arXiv:1808.08363.}


\newcommand{\skon}{\bibitem[Sk19]{Sk19} * \emph{A. Skopenkov.} Whitney trick for eliminating multiple intersections, slides for talks at St Petersburg, Brno, Kiev,  \url{https://www.mccme.ru/circles/oim/eliminat_talk.pdf}.}

\newcommand{\skd}{\bibitem[Sk]{Sk} * \emph{А. Скопенков.} Алгебраическая топология с алгоритмической точки зрения, 
\url{http://www.mccme.ru/circles/oim/algor.pdf}.}

\newcommand{\skde}{\bibitem[Sk]{Sk} * \emph{A. Skopenkov.} Algebraic Topology From Algorithmic Viewpoint, draft of a book, mostly in Russian,
\url{http://www.mccme.ru/circles/oim/algor.pdf}.}


\newcommand{\sktz}{\bibitem[Sk20u]{Sk20u} * \emph{A. Skopenkov.} A user's guide to knot and link theory, arXiv:2001.01472.}


\newcommand{\sktze}{\bibitem[Sk20'']{Sk20''} * \emph{A. Skopenkov.} Mathematics via problems: from olympiades and math circles to a profession. Algebra. AMS, Providence, to appear.}

\newcommand{\sktzr}{\bibitem[Sk20r]{Sk20r} * \emph{A. Skopenkov.} Realizability of hypergraphs is undecidable, slides for talks,  \url{https://www.mccme.ru/circles/oim/undec.pdf}.}

\newcommand{\skl}{\bibitem[EEF]{EEF} * \emph{Proposed by D. Eliseev, A. Enne, M. Fedorov, A. Glebov, N. Khoroshavkina, E. Morozov, A. Skopenkov, R. \v Zivaljevi\'c.}
A user's guide to knot and link theory, \url{https://www.turgor.ru/lktg/2019} .}

\newcommand{\sper}{\bibitem[Sp]{Sp} * Sperner's lemma defeats the rental harmony problem, \url{https://www.youtube.com/watch?v=7s-YM-kcKME}.}

\newcommand{\sset}{\bibitem[SS83]{SS83} \emph{Е. В. Щепин, М. А. Штанько.} Спектральный критерий вложимости компактов в евклидовы пространства, Труды Ленинградской Международной Топологической конференции. Л.: Наука, 1983. С.~135-142.}

\newcommand{\sszt}{\bibitem[SS03]{SS03} \emph{F. W. Simmons and F. E. Su.}
Consensus-halving via theorems of Borsuk-Ulam and Tucker, Math. Social Sciences 45 (2003) 15–25. \url{https://www.math.hmc.edu/~su/papers.dir/tucker.pdf}.}

\newcommand{\sssne}{\bibitem[SSS98]{SSS98} \emph{J. Segal, A. Skopenkov and S. Spie\. z.}
Embeddings of polyhedra in $\R^m$ and the deleted product obstruction, Topol. Appl. 1998. 85. P.~225-234.}

\newcommand{\sstnf}{\bibitem[SST95]{SST95} \emph{R. S. Simon, S. Spie\. z and H. Toru\'nczyk.}
T\lowercase{HE EXISTENCE OF EQUILIBRIA IN CERTAIN GAMES, SEPARATION FOR FAMILIES OF CONVEX FUNCTIONS
AND A THEOREM OF BORSUK-ULAM TYPE}, Israel J. Math 92 (1995) 1--21.}

\newcommand{\sstzt}{\bibitem[SST02]{SST02} \emph{R. S. Simon, S. Spie\. z and H. Toru\'nczyk.}
E\lowercase{QUILIBRIUM EXISTENCE AND TOPOLOGY IN SOME REPEATED GAMES WITH INCOMPLETE INFORMATION},
Trans. Amer. Math. Soc. 354:12 (2002) 5005-5026.}

\newcommand{\stzs}{\bibitem[ST07]{ST07} * \emph{А. Скопенков и А. Телишев.}
И вновь о критерии Куратовского планарности графов, Мат. Просвещение, 11 (2007), 159--160.}

\newcommand{\stzse}{\bibitem[ST07]{ST07} * \emph{A. Skopenkov and A. Telishev}, Once again on the Kuratowski graph planarity criterion,
Mat. Prosveschenie, 11 (2007), 159-160. arXiv:0802.3820.}

\newcommand{\stos}{\bibitem[ST17]{ST17} \emph{A. Skopenkov  and M. Tancer,}
Hardness of almost embedding simplicial complexes in $\R^d$, Discr. Comp. Geom., to appear, arXiv:1703.06305.}

\newcommand{\sz}{\bibitem[SZ05]{SZ} \emph{T. Sch\"oneborn and G. Ziegler}, The Topological Tverberg Theorem and Winding Numbers, J. Comb. Theory, Ser. A, 112:1 (2005) 82--104, arXiv:math/0409081.}

\newcommand{\szno}{\bibitem[Sz91]{Sz91} \emph{A.~Sz\"ucs,} On the cobordism groups of immersions and embeddings,
Math. Proc. Camb. Phil. Soc., 109 (1991) 343--349.}


\newcommand{\ta}{\bibitem[Ta]{Ta} * Handbook of Graph Drawing and Visualization. ed. by R. Tamassia, CRC Press, 2016.}


\newcommand{\tazz}{\bibitem[Ta00]{Ta00} \emph{K. Taniyama,} Higher dimensional links in a simplicial complex embedded in a sphere, Pacific Jour. of Math. 194:2 (2000), 465-467.}

\newcommand{\tooo}{\bibitem[To11]{To11} \emph{Tonkonog D.} Embedding 3-manifolds with boundary into closed 3-manifolds, Topol. Appl. 158 (2011), 1157-1162. arXiv:1003.3029.}


\newcommand{\tsbzf}{\bibitem[TSB]{TSB} \emph{D. M. Thilikos, M. Serna and H. L. Bodlaender},
Cutwidth I: A linear time fixed parameter algorithm, J. of Algorithms, 56:1 (2005), 1--24.}


\newcommand{\tsbzfd}{\bibitem[TSB05']{TSB05'} \emph{D. M. Thilikos, M. Serna and H. L. Bodlaender},
Cutwidth II: , J. of Algorithms, 56:1 (2005), 25--49.}



\newcommand{\umse}{\bibitem[Um78]{Um78} \emph{B. Ummel.} The product of nonplanar complexes does not imbed in 4-space, Trans. Amer. Math. Soc., 242 (1978) 319--328.}




\newcommand{\val}{\bibitem[Val]{Val} Valknut, \url{https://en.wikipedia.org/wiki/Valknut}}


\newcommand{\vktt}{\bibitem[vK32]{vK32} \emph{E.~R.~van~Kampen}, Komplexe in euklidischen R\"aumen, Abh. Math. Sem. Hamburg, 9 (1932) 72--78; Berichtigung dazu, 152--153.}

\newcommand{\kafo}{\bibitem[vK41]{vK41} \emph{E. R. van Kampen,} Remark on the address of S. S. Cairns,
in Lectures in Topology, 311--313, University of Michigan Press, Ann Arbor, MI, 1941.}

\newcommand{\vi}{\bibitem[Vi]{Vi} * \emph{O. Viro.}
Some integral calculus based on Euler characteristic, Lect. Notes in Math. 1346.}

\newcommand{\vizt}{\bibitem[Vi02]{Vi02} * \emph{Э. Б. Винберг.} Курс алгебры. Москва. Факториал Пресс. 2002.}

\newcommand{\vizteng}{\bibitem[Vi02]{Vi02} * \emph{E. B. Vinberg.} A Course in Algebra. Graduate Studies in Mathematics, vol. 56. 2003.}

\newcommand{\vinhzs}{\bibitem[VINH07]{VINH07} * \emph{О. Я. Виро, О. А. Иванов, Н. Ю. Нецветаев и В. М. Харламов.}
Элементарная топология, МЦНМО. 2007.}


\newcommand{\vo}{\bibitem[Vo96]{vo96} \emph{A. Yu. Volovikov,} On a topological generalization of the Tverberg theorem. Math. Notes 59:3 (1996), 324--326.}

\newcommand{\vopns}{\bibitem[Vo96']{Vo96'} \emph{A. Yu. Volovikov,} On the van Kampen-Flores Theorem.
Math. Notes 59:5 (1996), 477--481.}

\newcommand{\vznt}{\bibitem[VZ93]{VZ93} \emph{A. Vu\v ci\'c and R. T. \v Zivaljevi\'c}, Note on a conjecture of Sierksma, Discr. Comput. Geom. 9 (1993), 339-349.}

\newcommand{\vzzn}{\bibitem[VZ09]{VZ09} \emph{S. T. Vre\'cica and R. T. \v Zivaljevi\'c},  Chessboard complexes
indomitable, J. of Comb. Theory, Ser. A 118:7 (2011), 2157--2166. arXiv:0911.3512.}

\newcommand{\wallss}{\bibitem[Wa67]{Wa67} \emph{C.~T.~C.~Wall.} Classification problems in differential topology, IV, Thickenings, Topology 1966. 5. P. 73--94.}

\newcommand{\waldss}{\bibitem[Wa67]{Wa67} \emph{F. Waldhausen.} Eine Klasse von 3-dimensional Mannigfaltigkeiten, I. Invent. Math. 1967. 3. P.~308-333.}

\newcommand{\wess}{\bibitem[We67]{We67} \emph{C.~Weber.} Plongements de poly\`edres dans le domain metastable, Comment. Math. Helv. 42 (1967), 1--27.}

\newcommand{\whit}{\bibitem[Wl]{Wl} \url{https://en.wikipedia.org/wiki/Whitehead_link}}

\newcommand{\winum}{\bibitem[Wn]{Wn} \url{https://en.wikipedia.org/wiki/Winding_number}}

\newcommand{\wrss}{\bibitem[Wr77]{Wr77} \emph{P. Wright.} Covering 2-dimensional polyhedra by 3-manifolds spines.
Topology. 16 (1977), 435--439.}

\newcommand{\wusf}{\bibitem[Wu65]{Wu65} * \emph{W. T. Wu.} A Theory of Embedding, Immersion and Isotopy of Polytopes in an Euclidean Space. Peking: Science Press, 1965.}


\newcommand{\yann}{\bibitem[Ya99]{Ya99} \emph{Z. Yang.} Computing Equilibria and Fixed Points: The Solution of Nonlinear Inequalities, Kluwer, Springer Science + Business Media, 1990.}

\newcommand{\z}{\bibitem[Ze]{Z} * \emph{E. C. Zeeman}, A Brief History of Topology, UC Berkeley, October 27, 1993, On the occasion of Moe Hirsch's 60th birthday, \url{http://zakuski.utsa.edu/~gokhman/ecz/hirsch60.pdf}.}

\newcommand{\zioz}{\bibitem[Zi10]{Zi10} * \emph{D. \v Zivaljevi\'c}, Borromean and Brunnian Rings
\url{http://www.rade-zivaljevic.appspot.com/borromean.html}.}

\newcommand{\zioo}{\bibitem[Zi11]{Zi11} * \emph{G. M. Ziegler}, 3N Colored Points in a Plane, Notices of the AMS, 58:4 (2011), 550-557.}


\newcommand{\zot}{\bibitem[Zi13]{Z13} \emph{A. Zimin.} Alternative proofs of the Conway-Gordon-Sachs Theorems, arXiv:1311.2882.}


\newcommand{\zss}{\bibitem[ZSS]{ZSS} * Элементы математики в задачах: через олимпиады и кружки к профессии
Сборник под редакцией А. Заславского, А. Скопенкова и М. Скопенкова. Изд-во МЦНМО, 2018.
\url{http://www.mccme.ru/circles/oim/materials/sturm.pdf}.}


\newcommand{\zu}{\bibitem[Zu]{Zu} \emph{J. Zung.} A non-general-position Parity Lemma,
\url{http://www.turgor.ru/lktg/2013/1/parity.pdf}.}








\bibitem[BH70]{BH70} J. Boechat and A. Haefliger,
{\em Plongements differentiables de varietes de dimension 4 dans $\R^7$,}
Essays on topology and related topics, Springer, 1970, 156--166.

\bibitem[CRS07]{CRS07} M. Cencelj, D. Repov\v s and M. Skopenkov,
{\em Homotopy type of the complement of an immersion and classification of
embeddings of tori,} Uspekhi Mat. Nauk, 62:5 (2007) 165-166;
English transl: Russian Math. Surveys, 62:5 (2007)

\csoo
\crsot
\csos
\jonly{\csoso}


\bibitem[Do87]{Do87} S. K. Donaldson,
{\em The orientation of Yang-Mills moduli spaces and 4-manifold topology},
J.~Differential Geom.~{\bf 26} (1987), 397--428.



\bibitem[GS99]{GS99} * R. Gompf and A. Stipsicz, {\em 4-Manifolds and Kirby Calculus},
1999, Amer. Math. Soc., Providence, RI.



\bibitem[KS91]{KS91} M. Kreck and S. Stolz, {\em Some nondiffeomorphic homeomorphic homogeneous 7-manifolds
with positive sectional curvature,} J. Diff. Geom. 33 (1991) 465--486.


\bibitem[Le70]{Le70}
J. Levine, {\em Inertia groups of manifolds and diffeomorphisms of spheres}, Amer. J. Math. 92 (1970) 243--258.



\bibitem[Sc73]{Sc73} R. Schultz, {\em Homotopy decompositions of equivariant function spaces. I},
Math. Z. 131 (1973), 49--75.



\skze

\bibitem[Sk08']{Sk08'} A. Skopenkov, {\em Classification of smooth embeddings of 3-manifolds in
6-space}, Math. Zeitschrift, 260:3, 2008, 647-672, arxiv:math/0603429

\bibitem[Sk10]{Sk10} A.  Skopenkov, {\em A classification of smooth embeddings of
4-manifolds in 7-space}, I, Topol. Appl., 157 (2010) 2094-2110, arxiv:math/0808.1795.

\skoo
\skofp
\skosc
\skosie
\skosf
\skost

\bibitem[Vi73]{Vi73} O. Ya. Viro, {\em Local knotting of submanifolds (in Russian)}
Mat. Sbornik, 90:2 (1973) 173--183.

\bibitem[Wa66]{Wa66} C.~T.~C.~Wall, Classification Problems in Differential Topology. V. On Certain 6-Manifolds,
Invent. Math., {\bf 1} (1966) 355--374.

\bibitem[Wi74]{Wi74} D.~L.~Wilkens, {\em On the inertia group of certain manifolds},
J.~London Math.~Soc.~{\bf 9} (1974/5), 537--548.

\end{thebibliography}
\end{document}